\newtheorem{theorem}{Theorem}[section]
\newtheorem{corollary}[theorem]{Corollary}
\newtheorem{lemma}[theorem]{Lemma}
\newtheorem{proposition}[theorem]{Proposition}
\numberwithin{equation}{section}
\theoremstyle{definition}
\newtheorem*{remark}{Remark}
\newtheorem{example}[theorem]{Example}
\newcommand{\cK}{\mathcal{K}}
\newcommand{\cP}{\mathcal{P}}
\newcommand{\cR}{\mathcal{R}}
\newcommand{\cS}{\mathcal{S}}
\newcommand{\cQ}{\mathcal{Q}}
\newcommand{\cO}{\mathcal{O}}
\newcommand{\Irr}{\mathrm{Irr}}
\newcommand{\Ind}{\mathrm{Ind}}
\newcommand{\dd}{\displaystyle}
\newcommand{\UT}{\mathrm{UT}}
\newcommand{\cN}{\mathcal{N}}
\newcommand{\FF}{\mathbb{F}}
\newcommand{\nn}{\mathrm{nn}}
\newcommand{\f}{\mathrm{f}}
\newcommand{\Cl}{\mathtt{Cl}}
\newcommand{\Ch}{\mathtt{Ch}}
\newcommand{\tS}{\mathsf{S}}
\newcommand{\CC}{\mathbb{C}}
\newcommand{\GL}{\mathrm{GL}}
\newcommand{\opart}{\mathrm{pp}}
\newcommand{\nnopart}{\mathrm{pp}_\nn}
\newcommand{\cL}{\mathcal{L}}
\newcommand{\Res}{\mathrm{Res}}
\newcommand{\spanning}{\text{-span}}
\newcommand{\Inf}{\mathrm{Inf}}
\newcommand{\Int}{\mathrm{Int}}
\newcommand{\One}{1\hspace{-.13cm}1}
\newcommand{\cA}{\mathcal{A}}
\newcommand{\cC}{\mathcal{C}}
\newcommand{\cM}{\mathcal{M}}
\newcommand{\pp}{\mathrm{lbl}}
\newcommand{\poset}{\mathrm{PO}}
\newcommand{\pattern}{\mathbf{ptt}}
\newcommand{\cJ}{\mathcal{J}}
\newcommand{\anti}{\mathrm{Anti}}
\newcommand{\poposet}{\mathbf{ppst}}
\newcommand{\fpoposet}{\mathbf{Tppst}}
\newcommand{\Fac}{\mathrm{Fac}}
\newcommand{\select}{\mathrm{slt}}
\newcommand{\SC}{\mathrm{SC}}
\newcommand{\NPL}{\mathrm{NPtt}}
\newcommand{\ZZ}{\mathbb{Z}}
\newcommand{\width}{\mathrm{wdth}}
\title{Pattern groups and a\\ poset based Hopf monoid }
\date{}
\author{Farid Aliniaeifard and Nathaniel Thiem}
\begin{document}
	
	\maketitle
	
\begin{abstract}
The supercharacter theory of algebra groups gave us a representation theoretic realization of the Hopf algebra of symmetric functions in noncommuting variables.  The underlying representation theoretic framework comes equipped with two canonical bases, one of which was completely new in terms of symmetric functions.  This paper simultaneously generalizes this Hopf structure by considering a larger class of groups while also restricting the representation theory to a more combinatorially tractable one.  Using the normal lattice supercharacter theory of pattern groups, we not only gain a third canonical basis, but also are able to compute numerous structure constants in the corresponding Hopf monoid, including coproducts and antipodes for the new bases.
\end{abstract}

\section{Introduction} \label{Introduction}

A supercharacter theory of the unipotent upper-triangular matrices of the finite general linear groups gave a representation theoretic interpretation to the Hopf algebra of symmetric functions in non-commuting variables \cite{AABB12}.  In fact, these supercharacter theories glue together most naturally as a Hopf monoid as described in \cite{ABT13}, where we obtain the Hopf algebra as a quotient structure.   They give a rich combinatorics on set partitions explored  in \cite{BBT13,BT15,BT16}.   However, the overall Hopf structure remains mysterious, especially with regard to the coproduct on the character basis.  This paper explores a supercharacter theory that is simultaneously computable for a larger swath of groups (including in our case all pattern groups), and yet is more amenable to explicit computation of structure constants.

Formally introduced by Diaconis--Isaacs \cite{DI08}, a supercharacter theory can be thought of as an approximation to the usual character theory of a group.  Given any set partition $\cK$ of a group $G$, one can study the subspace of the space of functions $\f(G)=\{\psi:G\rightarrow\CC\}$ that are constant on the parts of $\cK$.  If this subspace additionally has a basis of orthogonal characters, then we say that the parts of $\cK$ are the superclasses of a supercharacter theory.  The interplay between the parts of $\cK$ and the basis of characters (called supercharacters) mimic the interplay between conjugacy classes and irreducible characters.    This point of view then gives a framework for studying the representation theory of coarser (and often more combinatorial) partitions of groups.

This paper uses a specific supercharacter theory introduced by Aliniaeifard \cite{AL17} in his Ph.D. thesis work.  It gives a general construction for arbitrary groups (though it prefers groups with non-trivial normal subgroups) that has many combinatorial properties baked in.   The paper \cite{AT18} explores some more combinatorial implications of such theories in general, giving lattice-based formulas for the supercharacter values and for the restriction of supercharacters.  This paper applies these techniques to the case of pattern groups (which is in fact the original motivation for the work).

Pattern groups are a family of unipotent groups that are built out of finite posets, roughly a group version of an incidence algebra.  While they were a fundamental example in \cite{DI08}, the supercharacter theory they give in that paper for pattern groups is not generally well understood.  Andrews introduced a different supercharacter theory called a non-nesting supercharacter theory that has nice combinatorial properties \cite{An15}; in fact, \cite{AT17} used this theory to study generalize Gelfand--Graev characters for the finite general linear groups.  While his theory differs from ours, it is in fact morally equivalent.   This paper explores a Hopf monoid first defined (up to moral equivalence) by Andrews; however, the Aliniaeifard supercharacter theory gives us some additional tools, including a third canonical basis and a restriction formula from \cite{AT18}.  These tools allow us to give more explicit structural results, including a coproduct on supercharacters and antipode formulas.
  
After reviewing some of the background material in Section \ref{Preliminaries}, we apply the results of \cite{AT18} to the pattern group case in Section \ref{PatternGroupCombinatorics}.  The main result of this section, Corollary \ref{PatternRestriction}, gives a combinatorial formula for restricting between pattern groups.  Section \ref{Monoid} reviews the monoid constructed in \cite{An15} and examines the structure constants of various bases.  Here we have a new third basis coming from the normal lattice supercharacter theory and Theorem \ref{StructureConstantsSupercharacters} gives a formula for the coproduct on supercharacters.  We then explore some of the structure of this monoid, establish an algebraically independent set of free generators (as a monoid), and construct the primitive elements in the style of \cite{BBT13}.  Along the way, we also compute the antipode on several bases.

\section{Preliminaries}\label{Preliminaries}

This section reviews the necessary background on pattern groups, supercharacter theories, and distributive lattices.  Throughout we will make use of different posets on an underlying set, so given a set $A$, let
$$\poset(A)=\{\text{partial orders on $A$}\}.$$

\subsection{Pattern groups}

Fix a finite field $\FF_q$, a set $A$, and a poset $\cR\in \poset(A)$, let 
$$\Int(\cR)=\{[i,j]\mid i,j\in \cR, i\preceq_\cR j\}$$
be the interval poset of $\cR$ (ordered by inclusion).  The corresponding \textbf{\emph{pattern group}} is given by
$$\UT_\cR=\{u:\Int(\cR)\rightarrow \FF_q\mid u([i,i])=1, i\in A\},$$
where for $u,v\in \UT_\cR$,
$$(uv)([i,k])=\sum_{i\preceq_\cR j\preceq_\cR k} u([i,j])v([j,k]).$$
In the case where $\cR$ is a linear order, we obtain the maximal group of upper triangular matrices in the finite general linear group with rows and columns indexed by $\cR$.

For each subposet $\cO\in\poset(A)$ of $\cR$, we have that $\UT_\cO\subseteq \UT_\cR$.  Let
$$\Int^\circ(\cR)=\{[i,j]\mid i,j\in \cR, i\prec_\cR j\}$$
be the set of proper intervals.  Since this paper is concerned with normal subgroups, we would like an easy characterization of when these subgroups $\UT_\cO$ is in fact normal. Recall, that a \textbf{\emph{co-ideal}} $I$ in a poset $\cR$ is a subset that satisfies $K\in I$ implies $L\in I$ for all $L\succeq_\cR K$.

\begin{proposition}
For subposet $\cO$ of $\cR$, $\UT_\cO\triangleleft\UT_\cR$ if and only if $\Int^\circ(\cO)$ is a co-ideal of $\Int^\circ(\cR)$.
\end{proposition}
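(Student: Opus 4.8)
The plan is to pass to the matrix model of pattern groups and reduce the group-theoretic normality condition to a purely algebraic closure condition on a nilpotent algebra, which then matches the co-ideal condition combinatorially. Writing $E_{ij}$ for the matrix unit indexed by $i,j\in A$ (so that $E_{ij}E_{kl}=\delta_{jk}E_{il}$), I would identify $\UT_\cR$ with $1+\mathfrak{n}_\cR$, where $\mathfrak{n}_\cR$ is the $\FF_q$-span of $\{E_{ij}\mid [i,j]\in\Int^\circ(\cR)\}$; transitivity of $\cR$ makes $\mathfrak{n}_\cR$ closed under multiplication and nilpotent, and the multiplication of $\UT_\cR$ is ordinary matrix multiplication. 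Under this identification $\UT_\cO$ becomes $1+\mathfrak{n}_\cO$ with $\mathfrak{n}_\cO$ the span of $\{E_{ij}\mid[i,j]\in\Int^\circ(\cO)\}\subseteq\mathfrak{n}_\cR$. The inclusion order on $\Int^\circ(\cR)$ reads $[p,q]\preceq[k,l]\iff k\preceq_\cR p\preceq_\cR q\preceq_\cR l$, so the co-ideal condition says exactly that whenever $[p,q]\in\Int^\circ(\cO)$ and one extends an endpoint outward (replacing $p$ by some $k\prec_\cR p$, or $q$ by some $l\succ_\cR q$) while staying in $\Int^\circ(\cR)$, the larger interval again lies in $\Int^\circ(\cO)$.

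The bridge between the two directions is the observation that this co-ideal condition is equivalent to $\mathfrak{n}_\cO$ being a two-sided ideal of $\mathfrak{n}_\cR$. Indeed the products $E_{kl}E_{lq}=E_{kq}$ and $E_{pk}E_{kl}=E_{pl}$ realize left- and right-endpoint extensions, so $\mathfrak{n}_\cR\mathfrak{n}_\cO\subseteq\mathfrak{n}_\cO$ and $\mathfrak{n}_\cO\mathfrak{n}_\cR\subseteq\mathfrak{n}_\cO$ hold precisely when $\Int^\circ(\cO)$ is closed under these extensions. Assuming the co-ideal condition, I would then prove normality directly: for $u=1+N\in\UT_\cO$ and any $g=1+X\in\UT_\cR$ with inverse $g^{-1}=1+X'$ (where $X,X'\in\mathfrak{n}_\cR$), one has $gug^{-1}=1+gNg^{-1}$ with $gNg^{-1}=N+XN+NX'+XNX'$, and every term after $N$ lies in $\mathfrak{n}_\cR\mathfrak{n}_\cO+\mathfrak{n}_\cO\mathfrak{n}_\cR+\mathfrak{n}_\cR\mathfrak{n}_\cO\mathfrak{n}_\cR\subseteq\mathfrak{n}_\cO$. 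Hence $gug^{-1}\in 1+\mathfrak{n}_\cO=\UT_\cO$, so $\UT_\cO\triangleleft\UT_\cR$; note this direction needs no generating-set argument.

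For the converse, assuming $\UT_\cO\triangleleft\UT_\cR$, I would test normality on well-chosen elementary elements. Given $[l,q]\in\Int^\circ(\cO)$ and any $k\prec_\cR l$, conjugating $u=1+E_{lq}$ by $g=1+E_{kl}$ gives $gug^{-1}=1+E_{lq}+E_{kq}$, and membership in $\UT_\cO$ forces $[k,q]\in\Int^\circ(\cO)$ — the left-extension half of the co-ideal condition. The parallel choice $u=1+E_{pk}$ (still with $g=1+E_{kl}$, for $[p,k]\in\Int^\circ(\cO)$) yields $gug^{-1}=1+E_{pk}-E_{pl}$, forcing $[p,l]\in\Int^\circ(\cO)$ and giving the right-extension half. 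Composing left and right extensions then recovers: $[p,q]\in\Int^\circ(\cO)$ and $[p,q]\subseteq[k,l]\in\Int^\circ(\cR)$ imply $[k,l]\in\Int^\circ(\cO)$, i.e.\ the full co-ideal property.

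The one delicate point, and where the poset axioms actually enter, is checking that the stray higher-order terms in each conjugation vanish. For example $E_{kl}NE_{kl}=N_{lk}E_{kl}$, and $N_{lk}\neq 0$ would require $l\prec_\cO k$, which is impossible: since $\cO\subseteq\cR$ this would give $l\prec_\cR k$, contradicting $k\prec_\cR l$ by antisymmetry. The same antisymmetry observation (e.g.\ $E_{lq}E_{kl}=\delta_{qk}E_{ll}=0$ because $k\prec_\cR l\prec_\cR q$ forbids $q=k$) is what collapses the elementary conjugations to the clean forms above. Beyond this, the argument is routine bookkeeping once the dictionary among $\UT$, $\mathfrak{n}$, and $\Int^\circ$ is fixed; I expect the main care to be in keeping the direction of the inclusion order on intervals consistent so that "co-ideal'' matches "extend endpoints outward.''
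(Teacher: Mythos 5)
Your proof is correct and follows essentially the same route as the paper's: the forward direction tests normality on the elementary matrices $e_{ab}(t)$ (your conjugation $gug^{-1}=1+E_{lq}+E_{kq}$ is the paper's commutator identity $e_{ik}(rs)=e_{ij}(r)e_{jk}(s)e_{ij}(-r)e_{jk}(-s)$ in disguise), followed by the same compose-a-left-extension-with-a-right-extension step to reach a general containing interval. The only difference is that you spell out the converse (co-ideal $\Rightarrow$ normal) via the two-sided-ideal property of $\mathfrak{n}_\cO$ in $\mathfrak{n}_\cR$, which is the natural formalization of what the paper dismisses as following ``easily from the definition of multiplication.''
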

\begin{proof}
Suppose $\UT_\cO\triangleleft\UT_\cR$. Let $[j,k]\in \Int^\circ(\cO)$ and suppose $[i,l]\in \Int^\circ(\cR)$ with $i\preceq_\cR j\prec_\cR k\preceq_{\cR}l$.  For $a,b\in \cR$ with $a\prec_\cR b$ and $t\in \FF_q$, define $e_{ab}(t)\in \UT_\cR$ by
\begin{equation}\label{PatternGenerators}
e_{ab}(t)([c,d])=\left\{\begin{array}{ll} 1 & \text{if $c=d$,}\\ t & \text{if $c=a,d=b$,}\\ 0 &\text{otherwise.}\end{array}\right.
\end{equation}
Then
$$e_{ik}(rs)=e_{ij}(r)e_{jk}(s)e_{ij}(-r)e_{jk}(-s)\in \UT_\cO, e_{il}(rst)= e_{ik}(rs) e_{kl}(t)e_{ik}(-rs) e_{kl}(-t)\in \UT_\cO,$$
so $i\prec_\cO l$ and $[i,l]\in \Int^\circ(\cO)$.  The converse follows easily from the definition of multiplication in a pattern group.
\end{proof}

\begin{remark}
For each linearization $\cL$ of $\cR$, there is a corresponding inclusion of $\UT_\cR$ in the unipotent group $\UT_\cL$.  However, some of these choices may lead to normal subgroups while others may not.  For example,
$$\UT_{\heartsuit<\clubsuit<\diamondsuit<\spadesuit}\supseteq \left\{\left(\begin{array}{cccc} 
1 & \ast & \ast  & \ast \\
0 & 1 & 0 & 0 \\
0 & 0 & 1& \ast \\
0 & 0 & 0 & 1\end{array}\right)\right\}\cong \UT_{\begin{tikzpicture} [scale=.3]
\foreach \x/\y/\z/\w in {0/0/\heartsuit/1,-.5/1/\diamondsuit/2,.5/1/\clubsuit/3,-.5/2/\spadesuit/4}
	\node (\w) at (\x,\y)[inner sep=0pt] {$\scriptscriptstyle\z$};
\foreach \x/\y in {1/2,1/3,2/4}
	\draw (\x) -- (\y);
\end{tikzpicture}}\cong 
\left\{\left(\begin{array}{cccc} 
1 & \ast & \ast &\ast  \\
0 & 1 & 0 & \ast \\
0 & 0 & 1& 0\\
0 & 0 & 0 & 1\end{array}\right)\right\}\subseteq \UT_{\heartsuit<\diamondsuit<\clubsuit<\spadesuit}.$$
The left-hand inclusion is as a non-normal subgroup and the right-hand one is as a normal subgroup.  
\end{remark}

 Given a poset $\cR$, the pattern group $\UT_\cR$ generally has many normal subgroups, and in general it is not clear that it is possible to find them all.  However, we are primarily interested in the subset of normal subgroups
\begin{equation}\label{NormalPatternSubgroups}
\NPL(\cR)=\{\UT_\cQ\triangleleft \UT_\cR\mid \cQ\in \poset(A)\text{ a subposet of }\cR\}.
\end{equation}
These subgroups are well-behaved in the sense that 
$$\UT_\cO\cap \UT_\cP=\UT_{\cO\cap \cP} \quad \text{where}\quad i\preceq_{\cO\cap \cP}j \text{ if and only if  $i\preceq_\cO j$ and $i\preceq_\cP j$},$$
and 
$$ \UT_\cO \UT_\cP =\UT_{\cO\cup\cP} \quad \text{where}\quad i\preceq_{\cO\cup \cP}j \text{ if and only if  $i\preceq_\cO j$ or $i\preceq_\cP j$}.$$
Thus, we obtain a poset with unique greatest lower bounds and smallest upper bounds.  In Section \ref{LatticeIntro}, we will show that $\NPL(\cR)$ in fact forms a distributive lattice.

\subsection{Normal lattice supercharacter theories}\label{NormalLatticeSupercharacter}

Given a set partition $\cK$ of $G$, let
$$\f(G;\cK)=\{\psi:G\rightarrow \CC\mid \{g,h\}\subseteq K\in \cK\text{ implies } \psi(g)=\psi(h)\}$$
be the set of functions constant on the blocks of $\cK$.

A \textbf{\emph{supercharacter theory}} $\tS$ of a finite group $G$ is a pair $(\Cl(\tS),\Ch(\tS))$ where $\Cl(\tS)$ is a set partition of $G$ and $\Ch(\tS)$ is a set partition of the irreducible characters $\Irr(G)$ of $G$, such that
\begin{enumerate}
\item[(SC1)] $\{1\}\in \Cl(\tS)$,
\item[(SC2)] $|\Cl(\tS)|=|\Ch(\tS)|$,
\item[(SC3)] For each $X\in \Ch(\tS)$,
$$\sum_{\psi\in X}\psi(1)\psi\in \f(G;\Cl(\tS)).$$
\end{enumerate}  

We refer to the blocks of $\Cl(\tS)$ as the \textbf{\emph{superclasses}} of $\tS$, and the elements of 
$$\{\sum_{\psi \in X}\psi(1)\psi\mid X\in \Ch(\tS) \}$$
 as \textbf{\emph{supercharacters}} of $\tS$.   In fact, the supercharacters of $\tS$ will form an orthogonal basis for $\f(G;\Cl(\tS))$; thus, the superclasses are unions of conjugacy classes.  In general, one only needs to specify one partition ($\Cl$ or $\Ch$) since the two determine one-another.
 
\begin{lemma}[\cite{DI08}]
If $\Cl$ is a set partition of $G$, then there is at most one set partition $\Ch$ of $\Irr(G)$ such that $(\Cl,\Ch)$ is a supercharacter theory.
\end{lemma}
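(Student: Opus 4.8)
The plan is to show that the partition $\Ch$ can be reconstructed intrinsically from $\Cl$, so that no two distinct partitions of $\Irr(G)$ can both complete $\Cl$ to a supercharacter theory. Suppose $\tS=(\Cl,\Ch)$ is a supercharacter theory, and for each block $X\in\Ch$ write $\sigma_X=\sum_{\psi\in X}\psi(1)\psi$ for the associated supercharacter. By the fact recorded above, the $\sigma_X$ form an orthogonal basis of $\f(G;\Cl)$ with respect to the standard inner product $\langle\cdot,\cdot\rangle$ on class functions, and since $\Ch$ is a set partition of $\Irr(G)$ each $\psi\in\Irr(G)$ lies in a unique block. The orthonormality of $\Irr(G)$ then gives, for any $Y\in\Ch$,
\[
\langle \sigma_Y,\psi\rangle=\psi(1)\,[\psi\in Y].
\]

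First I would extract from this a membership criterion phrased only in terms of $\Cl$. Given $\psi,\phi\in\Irr(G)$ lying in blocks $X,X'$ respectively, expand an arbitrary $\alpha\in\f(G;\Cl)$ in the supercharacter basis as $\alpha=\sum_{Y}c_Y\sigma_Y$; then the displayed identity yields $\langle\alpha,\psi\rangle=c_X\psi(1)$ and $\langle\alpha,\phi\rangle=c_{X'}\phi(1)$. If $X=X'$ then $\langle\alpha,\psi\rangle/\psi(1)=\langle\alpha,\phi\rangle/\phi(1)$ for every $\alpha\in\f(G;\Cl)$. Conversely, if $X\neq X'$, then choosing $\alpha=\sigma_X\in\f(G;\Cl)$ makes the left ratio equal $1$ and the right ratio equal $0$. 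Thus
\[
\psi,\phi\ \text{lie in the same block of}\ \Ch\quad\Longleftrightarrow\quad \frac{\langle\alpha,\psi\rangle}{\psi(1)}=\frac{\langle\alpha,\phi\rangle}{\phi(1)}\ \text{for all}\ \alpha\in\f(G;\Cl).
\]

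Next I would observe that the right-hand condition involves only the space $\f(G;\Cl)$, the irreducible characters, and their degrees, none of which reference $\Ch$. Since it suffices to test the condition on the indicator functions $\one_K$ with $K\in\Cl$, which span $\f(G;\Cl)$, the criterion can be rewritten as $\frac{1}{\psi(1)}\sum_{g\in K}\overline{\psi(g)}=\frac{1}{\phi(1)}\sum_{g\in K}\overline{\phi(g)}$ for every $K\in\Cl$, manifestly determined by $\Cl$ alone. Hence the equivalence relation on $\Irr(G)$ whose classes are the blocks of $\Ch$ is forced by $\Cl$, so at most one such $\Ch$ can exist.

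The argument is short, and its only real content is the choice of the right detection functional; the main thing to get right is that membership of an individual irreducible in a block is visible through inner products against all of $\f(G;\Cl)$, for which the cleanest witness is the supercharacter $\sigma_X$ itself. The remaining manipulations, namely expanding in the orthogonal basis and passing to indicator functions, are routine.
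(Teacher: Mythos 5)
Your argument is correct. Note that the paper itself offers no proof of this lemma---it is quoted from \cite{DI08} as background---so there is nothing internal to compare against; what you have written is a complete, self-contained justification. The one fact you lean on, that the functions $\sigma_X=\sum_{\psi\in X}\psi(1)\psi$ form an orthogonal basis of $\f(G;\Cl)$, is exactly the fact the paper records immediately after the definition (it follows from (SC3), the pairwise orthogonality of the $\sigma_X$ coming from the disjointness of the blocks, and the dimension count $|\Ch|=|\Cl|=\dim\f(G;\Cl)$ from (SC2)), and your use of $\sigma_X$ itself as the separating functional in the converse direction is the right choice. For comparison, the argument in \cite{DI08} runs slightly differently: given two completions $\Ch$ and $\Ch'$ of the same $\Cl$, each $\sigma_X$ for $X\in\Ch$ lies in $\f(G;\Cl)$ and hence is a linear combination of the $\sigma_{X'}$ for $X'\in\Ch'$, so its set of irreducible constituents---which is exactly $X$---is a union of blocks of $\Ch'$; by symmetry the two partitions refine each other and so coincide. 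Your version instead extracts an explicit $\Cl$-intrinsic equivalence relation on $\Irr(G)$ whose classes are forced to be the blocks, which is marginally more constructive but relies on the same basis fact; both are equally short.
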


The trivial examples of supercharacter theories have partitions 
\begin{equation*} \Cl=\{\text{conjugacy classes}\}\quad \text{and}\quad
\Cl=\{\{1\},G-\{1\}\}.
\end{equation*}
 
There are several standard supercharacter theories that get applied to pattern groups.  Since a pattern group $\UT_\cR$ is in fact an algebra group, \cite{DI08} defines a supercharacter theory whose superclasses are the equivalence classes of the set partition
\begin{equation}\label{AlgebraGroup}
u\sim v\quad \text{if and only if} \quad \text{there exist $a,b\in \UT_\cR$ such that $u=1+a(v-1)b$.}
\end{equation}
However, it is not even known whether this theory is in general wild, and they certainly are not generally understood.  Andrews \cite{An15} defines a more suitable theory which he calls a non-nesting supercharacter theory for $\UT_\cR$.   Given $u\in \UT_\cR$, let $N_u$ be the smallest normal pattern subgroup containing $u$.  Then the superclass containing $u$ is
$$\Cl(u)=u\bigcap_{M\text{ maximal}\atop \text{in } N_u} M.$$
This theory is very close to the supercharacter theory that this paper focuses on. 

The following supercharacter theory defined in \cite{AL17} begins with a set of normal subgroups and constructs a supercharacter theory that has these normal subgroups as unions of superclasses.

\begin{theorem}[Normal Lattice Supercharacter Theory \cite{AL17}]\label{NormalLatticeSupercharacterTheory}
 Let $\cN$ be a set of normal subgroups such that
 \begin{enumerate}
 \item[(1)] $\{1\},G\in \cN$,
 \item[(2)] $M,N\in \cN$ implies $MN,M\cap N\in \cN$.
 \end{enumerate}   
 Then the partitions
$$\Cl=\{N_\circ\neq \emptyset\mid N\in \cN\},\quad \text{where} \quad N_\circ=\{g\in N\mid g\notin M\in \cN, \text{ if $N\in \cC(M)$}\},$$
and 
$$\Ch=\{X^{N^\bullet}\neq \emptyset \mid N\in \cN\},\quad \text{where} \quad  X^{N^\bullet}=\{\psi\in \Irr(G)\mid N\subseteq \ker(\psi)\nsupseteq O,\text{ if $O\in \cC(N)$}\}$$
define a supercharacter theory $\tS_\cN$ of $G$.
\end{theorem}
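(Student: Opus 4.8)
The plan is to verify the three axioms (SC1)--(SC3) directly, exploiting that hypotheses (1)--(2) make $\cN$ a finite lattice under inclusion with meet $M\wedge N=M\cap N$ and join $M\vee N=MN$. Two canonical maps organize everything. For $g\in G$, closure under intersection together with $G\in\cN$ gives a smallest member $N_g=\bigcap\{N\in\cN\mid g\in N\}\in\cN$ containing $g$; I would first check that $g\in(N_g)_\circ$ and that this is the only block of $\Cl$ containing $g$. Indeed, if $g\in N_\circ$ then $N_g\subseteq N$, and if $N_g\neq N$ then walking up a maximal chain from $N_g$ to $N$ exhibits a member covered by $N$ (so one of the $M$ with $N\in\cC(M)$) that still contains $g$, contradicting $g\in N_\circ$. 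Hence $\{N_\circ\}$ partitions $G$, and for $M\in\cN$ one has $g\in M\iff N_g\subseteq M$. Dually, for $\psi\in\Irr(G)$, closure under products together with $\{1\}\in\cN$ gives a largest member $K_\psi\in\cN$ with $K_\psi\subseteq\ker\psi$, and I would check that $X^{N^\bullet}=\{\psi\mid K_\psi=N\}$, so $\{X^{N^\bullet}\}$ partitions $\Irr(G)$.

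Axiom (SC1) is then immediate: $\{1\}$ is the bottom of $\cN$ and covers nothing, so $\{1\}_\circ=\{1\}\in\Cl$. For (SC3) the key device is the inflated regular character $\sigma_N:=\sum_{N\subseteq\ker\psi}\psi(1)\psi$, which is the regular character of $G/N$ pulled back to $G$, namely $\sigma_N=|G/N|\,\mathbf{1}_N$. Grouping the characters with $N\subseteq\ker\psi$ according to the value of $K_\psi$ (which must then contain $N$) yields $\sigma_N=\sum_{M\in\cN,\,M\supseteq N}\chi^M$, where $\chi^M=\sum_{\psi\in X^{M^\bullet}}\psi(1)\psi$ is the proposed supercharacter. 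Möbius inversion in $\cN$ gives $\chi^N=\sum_{M\supseteq N}\mu(N,M)\,\sigma_M$. Since $\mathbf{1}_M$ takes the value $1$ on a block $N'_\circ$ exactly when $N'\subseteq M$ and $0$ otherwise --- a condition depending only on the block, by the first paragraph --- each $\sigma_M$, hence each $\chi^N$, lies in $\f(G;\Cl)$, which is (SC3).

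The remaining axiom (SC2), that $|\Cl|=|\Ch|$, I would obtain by computing the dimension of $V:=\mathrm{span}_\CC\{\sigma_N\mid N\in\cN\}=\mathrm{span}_\CC\{\mathbf{1}_N\mid N\in\cN\}$ in two ways. On the character side the relations above are triangular over $\cN$, so $V=\mathrm{span}\{\chi^N\}$; as the nonzero $\chi^N$ are sums over pairwise disjoint sets of irreducibles with positive coefficients, they are linearly independent and $\dim V=|\Ch|$. On the class-function side the partition $N=\bigsqcup_{M\subseteq N}M_\circ$ gives $\mathbf{1}_N=\sum_{M\subseteq N}\mathbf{1}_{M_\circ}$, and Möbius inversion expresses each $\mathbf{1}_{N_\circ}$ back in terms of the $\mathbf{1}_M$; hence $V=\mathrm{span}\{\mathbf{1}_{N_\circ}\mid N_\circ\neq\emptyset\}$, and these indicators of disjoint blocks are linearly independent, so $\dim V=|\Cl|$. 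Comparing the two counts gives (SC2).

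The main obstacle is precisely (SC2): the two partitions are a priori indexed by $\cN$ but may have different sets of nonempty blocks, and it is not obvious index-by-index that $N_\circ\neq\emptyset\iff X^{N^\bullet}\neq\emptyset$. Routing the count through the single space $V$ is what sidesteps this and forces the cardinalities to agree. The technical heart is therefore the pair of Möbius inversions --- one on $\Irr(G)$ and one on $\f(G)$ --- linked by the identification $\sigma_N=|G/N|\,\mathbf{1}_N$, while the two canonical assignments $g\mapsto N_g$ and $\psi\mapsto K_\psi$ (well-defined by the lattice hypotheses) are what let both families be reindexed cleanly by $\cN$.
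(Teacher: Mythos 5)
This theorem is imported from \cite{AL17}; the paper states it with a citation and gives no proof of its own, so there is no internal argument to compare against. Your proof is correct and complete, and it is the natural route (and, as far as one can tell, essentially the route of the cited source): reindex the blocks by the two canonical assignments $g\mapsto N_g$ and $\psi\mapsto K_\psi$, realize the inflated regular characters $\sigma_N=|G/N|\,\mathbf{1}_N$ as the bridge between the character side and the class-function side, and use M\"obius inversion on the finite poset $\cN$ twice. The one place where a reader might worry is your ``walk up a maximal chain'' step: you need the penultimate element $M$ of a saturated chain in the interval $[N_g,N]$ to be covered by $N$ \emph{in all of $\cN$}, not merely in the interval. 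This does hold, since any $X\in\cN$ with $M\subsetneq X\subsetneq N$ automatically contains $N_g$ and hence lies in the interval, contradicting saturation; the dual check for $K_\psi$ is the same. Your observation that (SC2) cannot be verified index-by-index (the nonempty $N_\circ$ and nonempty $X^{N^\bullet}$ need not correspond to the same subset of $\cN$) and must instead be routed through the common span $V=\mathrm{span}\{\mathbf{1}_N\mid N\in\cN\}$ is exactly the right point to isolate, and your two linear-independence arguments (disjoint supports in $\Irr(G)$, disjoint supports in $G$) close it correctly.
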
 

In the case of pattern subgroups, we let 
$$\cN=\NPL(\cR)$$
or the set of all normal pattern subgroups of $\UT_\cR$.  In this case, from the definition of the superclasses it is not difficult to see that $\tS_{\NPL(\cR)}$ is a coarsening of Andrews' non-nesting theory. Write
\begin{equation}\label{PatternNormalLattice}
\tS_{\NPL(\cR)}=(\Cl(\cR),\Ch(\cR)).
\end{equation}

\subsection{Distributive lattices}\label{LatticeIntro}

In order to study normal lattice supercharacter theories we select lattices of normal pattern subgroups, and in our case they are in fact distributive lattices.  

In a poset $\cL$, a \textbf{\emph{cover}} $L\in \cL$ of an element $K\in \cL$ is an element that satisfies $K\prec_{\cL} L$ and if $K\prec_{\cL}M\preceq_{\cL} L$ then $L=M$.  Let
$$\cC(K)=\{\text{covers of $K$ in $\cL$}\}.$$
A \textbf{\emph{distributive lattice}} $\cL$ is a poset such that if 
$$\cM=\{L\in \cL\mid |\cC(L)|=1\}$$
is the set of \textbf{\emph{meet irreducible}} elements, then for each $K\in \cL$ there exists a unique anti-chain $\cA\subseteq \cM$ such that $K$ is the unique largest element in $\cL$ smaller than all elements in $\cA$.   In this case, we write
$$\cL=J^\vee(\cM).$$

In fact, this is equivalent to if
$$\cJ=\{L\in \cL\mid \#\{K\in \cL\mid L\in \cC(K)\}=1\},$$
is the set of \textbf{\emph{join irreducible}} elements, then  for each $K\in \cL$ there exists a unique anti-chain $\cA\subseteq \cJ$ such that $K$ is the unique smallest element in $\cL$ bigger than all elements in $\cA$.  In this case we write,
$$\cL=J(\cJ).$$

\begin{lemma}
For each poset $\cR$, the poset $\NPL(\cR)$ is a distributive lattice.
\end{lemma}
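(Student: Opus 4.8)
The plan is to realize $\NPL(\cR)$, ordered by inclusion of subgroups, as the lattice of co-ideals of the interval poset $\Int^\circ(\cR)$ ordered by inclusion, and then to invoke the classical structure of co-ideal lattices, which is exactly the Birkhoff-type representation packaged into the definition of distributive lattice given above. Concretely, I would study the map $\Phi\colon \UT_\cQ \mapsto \Int^\circ(\cQ)$.

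First I would show $\Phi$ is a bijection onto the co-ideals of $\Int^\circ(\cR)$. It is well defined with image in the co-ideals precisely by the Proposition, which says $\UT_\cQ\triangleleft\UT_\cR$ if and only if $\Int^\circ(\cQ)$ is a co-ideal of $\Int^\circ(\cR)$. Injectivity is immediate, since a subposet $\cQ$ is recovered from its proper intervals via $i\prec_\cQ j \iff [i,j]\in\Int^\circ(\cQ)$. For surjectivity, given a co-ideal $I$ I would define a relation on $A$ by reflexivity together with $i\prec j \iff [i,j]\in I$, and check it is a genuine partial order refining $\cR$: the only nontrivial point is transitivity, and here the co-ideal hypothesis does the work, since if $[i,j],[j,k]\in I$ then $[i,j]\subseteq[i,k]$ in $\Int^\circ(\cR)$, so upward closure forces $[i,k]\in I$. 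This produces a subposet $\cQ$ with $\Phi(\UT_\cQ)=I$.

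Next I would verify that $\Phi$ is an isomorphism of lattices. Using the meet formula $\UT_\cO\cap\UT_\cP=\UT_{\cO\cap\cP}$ recalled before the lemma, the inclusion $\UT_\cO\subseteq\UT_\cP$ is equivalent to $\cO\cap\cP=\cO$, hence to $\cO\subseteq\cP$ as relations, hence to $\Int^\circ(\cO)\subseteq\Int^\circ(\cP)$; so $\Phi$ is order-preserving in both directions. The same meet formula and the join formula $\UT_\cO\UT_\cP=\UT_{\cO\cup\cP}$ show that $\Phi$ carries meets to intersections and joins to unions of co-ideals. Finally, the lattice of co-ideals (up-sets) of any finite poset $P$ is the prototypical distributive lattice: its join-irreducibles are exactly the principal co-ideals $\uparrow x$ (forming a poset isomorphic to $P^{\op}$), and each co-ideal $I$ equals $\bigcup_{x\in\min(I)}\uparrow x$, the minimal elements of $I$ furnishing the unique antichain of join-irreducibles whose join is $I$. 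Transporting this back through $\Phi$ puts $\NPL(\cR)$ in the form $J(\cJ)$ demanded by the definition, with $\cJ$ the smallest normal pattern subgroups containing a single generator $e_{ij}(t)$, and so $\NPL(\cR)$ is distributive.

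I expect the main obstacle to be the surjectivity step of the first paragraph, namely confirming that an arbitrary co-ideal of $\Int^\circ(\cR)$ really does come from an honest subposet of $\cR$ (the transitivity check), since everything after that is bookkeeping built on the Proposition, the stated meet/join formulas, and the standard theory of co-ideal lattices.
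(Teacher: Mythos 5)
Your proof is correct and rests on the same key identification as the paper's: normal pattern subgroups $\UT_\cQ\triangleleft\UT_\cR$ correspond bijectively to co-ideals $\Int^\circ(\cQ)$ of $\Int^\circ(\cR)$, and the lattice of co-ideals of a finite poset is distributive. The only real difference is one of duality --- you exhibit $\NPL(\cR)$ as $J(\cJ)$ with join-irreducibles the principal co-ideals (which is precisely the content of the remark following the lemma in the paper), whereas the paper exhibits it as $J^\vee(\cM)$ with meet-irreducibles corresponding to complements of principal ideals; your explicit surjectivity/transitivity check is a detail the paper leaves implicit, and it is correct as you carry it out.
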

\begin{proof}
Let $\cR$ be a poset.  Each subposet $\cQ$ such that $\UT_\cQ\triangleleft \UT_\cR$ corresponds to a co-ideal $\Int(\cQ)$ in $\Int(\cR)$. A cover of $\UT_\cQ$ then corresponds to adding a maximal element in $\Int^\circ(\cR)-\Int^\circ(\cQ)$ to $\Int^\circ(\cQ)$.  Thus the the set of meet irreducible elements is
$$\cM=\{\UT_{\cR_{[i,j]}}\mid [i,j]\in \Int^\circ(\cR)\},$$
where 
$$ k\prec_{\cR_{[i,j]}} l\quad \text{if and only if} \quad [k,l]\npreceq_{\Int^\circ(\cR)} [i,j].$$
That is, each element in $\cM$ corresponds to a principal ideal in $\Int^\circ(\cR)$.  Then if $\UT_\cP\triangleleft\UT_\cR$, then $\Int^\circ(\cP)$ is a co-ideal in $\Int^\circ(\cR)$.  Let $\lambda$ be the set of maximal elements of $\Int^\circ(\cR)$ not in $\Int^\circ(\cP)$.  Then
$$\UT_\cP=\bigcap_{[i,j]\in \lambda} \UT_{\cR_{[i,j]}}.\qedhere$$
\end{proof}

For $\cA\subseteq \NPL(\cR)$, let
$$\underline{\cA}=\bigcap_{U\in \cA} U\quad \text{and}\quad \overline{\cA}=\prod_{U\in \cA} U.$$
By the previous lemma, for each $\UT_\cQ\triangleleft \UT_\cR$, there is a unique anti-chain $\cA\subseteq \cM$ such that
$$\UT_\cQ=\underline{\cA}.$$

\begin{remark}
The join irreducibles are also easily computed.   By removing any minimal element in $\Int^\circ(\cQ)$, we get a normal subgroup that has $\UT_\cQ$ as a cover.  Thus,
$$\cJ=\{\UT_{\cR_{[i,j]}^\vee}\mid [i,j]\in \Int^\circ(\cR)\},$$
where
$$ k\prec_{\cR_{[i,j]}^\vee} l\quad \text{if and only if} \quad [i,j] \preceq_{\Int^\circ(\cR)}[k,l].$$
That is, each element in $\cM$ corresponds to a principal co-ideal in $\Int^\circ(\cR)$.
\end{remark}

\section{The poset partition combinatorics of a normal lattice theory}\label{PatternGroupCombinatorics}

This section applies \cite{AT18} to the particular case of the normal lattice theory built on normal pattern subgroups.  We first review the poset combinatorics introduced in \cite{An15} and connect it to the normal pattern subgroup lattice $\NPL(\cR)$.    We review the character formula of this theory from the point of view introduced in \cite{AT18}, and conclude with a combinatorial description of restriction between pattern groups.

\subsection{Poset partitions}

Fix a set $A$, and let $\cR\in \poset(A)$.  An $\cR$-\textbf{\emph{partition}} is a subset
$$\lambda\subseteq \Int^\circ(\cR)$$
such that the interval between two elements $[i,j],[k,l]\in \lambda$ is a total order if and only if $[i,j]=[k,l]$.
Let
\begin{equation}
\opart(\cR)=\{\cR\text{-partitions}\}.
\end{equation}

We may visualize these partitions by placing the Hasse diagram of $\cR$ as the base, and then place an arc from $i$ to $j$ if $[i,j]\in \lambda$.  For example, 
$$\lambda=\{[1,4],[4,5],[3,5]\}\in \opart\left(
\begin{tikzpicture}[scale=.3,baseline=.5cm]
\foreach \x/\y/\z in {0/0/1,0/1/2,-1/2/3,1/2/4,0/3/5,0/4/6}
	\node (\z) at (\x,\y) [inner sep = 0] {$\scriptscriptstyle{\z}$};
\foreach \x/\y in {1/2,2/3,2/4,3/5,4/5,5/6}
	\draw[gray] (\x)--(\y);
\end{tikzpicture}\right)\quad \text{is}\quad 
\begin{tikzpicture}[scale=.7,baseline=0cm]
\foreach \x/\y/\z in {0/0/1,1/.1/2,1.9/.6/3,2.1/-.4/4,3/.3/5,4/.4/6}
	\node (\z) at (\x,\y) [inner sep = 0] {$\scriptscriptstyle{\z}$};
\foreach \x/\y in {1/2,2/3,2/4,3/5,4/5,5/6}
	\draw[gray] (\x)--(\y);
\foreach \x/\y in {1/4,4/5,3/5}
	\draw (\x) to [out=70, in=110] (\y);
\end{tikzpicture}\ .
$$
Note that if $\cL$ is a linear order, then the transitive closure of the relation $i\sim_\lambda j$ if $[i,j]\in \lambda$ gives a set partition of the underlying set.

Given a poset $\cP$, let
$$\anti(\cP)=\{\lambda\subseteq \cP\mid \lambda\text{ is an antichain}\}.$$
An $\cR$-partition $\lambda$ is \textbf{\emph{non-nesting}} if $\lambda$ is an anti-chain in $\Int^\circ(\cR)$.   Let
$$\nnopart(\cR)=\{\lambda\in\opart(\cR)\mid \lambda\in \anti\Big(\Int^\circ(\cR)\Big)\}.$$
For example,
$$\begin{tikzpicture}[scale=.7,baseline=0cm]
\foreach \x/\y/\z in {0/0/1,1/.1/2,1.9/.6/3,2.1/-.4/4,3/.3/5,4/.4/6}
	\node (\z) at (\x,\y) [inner sep = 0] {$\scriptscriptstyle{\z}$};
\foreach \x/\y in {1/2,2/3,2/4,3/5,4/5,5/6}
	\draw[gray] (\x)--(\y);
\foreach \x/\y in {3/6,4/5}
	\draw (\x) to [out=70, in=110] (\y);
\end{tikzpicture}\in \nnopart\left(
\begin{tikzpicture}[scale=.3,baseline=.5cm]
\foreach \x/\y/\z in {0/0/1,0/1/2,-1/2/3,1/2/4,0/3/5,0/4/6}
	\node (\z) at (\x,\y) [inner sep = 0] {$\scriptscriptstyle{\z}$};
\foreach \x/\y in {1/2,2/3,2/4,3/5,4/5,5/6}
	\draw[gray] (\x)--(\y);
\end{tikzpicture}\right)\qquad \text{but}\qquad \begin{tikzpicture}[scale=.7,baseline=0cm]
\foreach \x/\y/\z in {0/0/1,1/.1/2,2/.2/3,3/.3/4,4/.4/5,5/.5/6}
	\node (\z) at (\x,\y) [inner sep = 0] {$\scriptscriptstyle{\z}$};
\foreach \x/\y in {1/2,2/3,2/4,3/5,4/5,5/6}
	\draw[gray] (\x)--(\y);
\foreach \x/\y in {3/6,4/5}
	\draw (\x) to [out=70, in=110] (\y);
\end{tikzpicture}\notin \nnopart\left(
\begin{tikzpicture}[scale=.3,baseline=.6cm]
\foreach \x/\y/\z in {0/0/1,0/1/2,0/2/3,0/3/4,0/4/5,0/5/6}
	\node (\z) at (\x,\y) [inner sep = 0] {$\scriptscriptstyle{\z}$};
\foreach \x/\y in {1/2,2/3,3/4,4/5,5/6}
	\draw[gray] (\x)--(\y);
\end{tikzpicture}\right).$$

\begin{proposition} \label{NonnestingToPattern}
Let $\cR$ be poset.  Then
\begin{enumerate}
\item[(a)] The function 
$$\begin{array}{r@{\ }c@{\ }c@{\ }c} \pp_\Cl: & \{\UT_\cQ\triangleleft \UT_\cR\} & \longrightarrow & \nnopart(\cR)\\
& \UT_\cQ & \mapsto & \left\{\begin{array}{@{}c@{}}\text{minimal elements}\\ \text{of $\Int^\circ(\cQ)$}\end{array}\right\}.
\end{array}
$$
is a bijection.
\item[(b)] The function 
$$\begin{array}{r@{\ }c@{\ }c@{\ }c} \pp_\Ch:  & \nnopart(\cR)  & \longrightarrow & \{\UT_\cQ\triangleleft \UT_\cR\}\\
&  Q & \mapsto &\dd\bigcap_{[i,j]\in Q}  \UT_{\cR_{[i,j]}} 
\end{array}
$$
is a bijection.
\item[(c)] For $\UT_\cR$ with normal pattern group sub-lattice, the superclasses and supercharacters are indexed by $\nnopart(\cR)$.
\end{enumerate}
\end{proposition}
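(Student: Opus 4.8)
The plan is to establish parts (a) and (b) by exhibiting the two maps as mutually inverse bijections with the distributive lattice structure developed above, and then to deduce (c) immediately from the indexing of superclasses and supercharacters in Theorem \ref{NormalLatticeSupercharacterTheory}. First I would note that every $\UT_\cQ\triangleleft\UT_\cR$ corresponds to a co-ideal $\Int^\circ(\cQ)$ in $\Int^\circ(\cR)$, by the Proposition characterizing normality. The key observation for (a) is that for an $\cR$-partition, being non-nesting is precisely the condition that its arcs form an antichain in $\Int^\circ(\cR)$; so the minimal elements of a co-ideal, which always form an antichain, give a well-defined element $\pp_\Cl(\UT_\cQ)\in\nnopart(\cR)$. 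Conversely, any antichain $\lambda\in\nnopart(\cR)$ generates a principal co-ideal (the set of all intervals $\succeq_{\Int^\circ(\cR)}$ some arc of $\lambda$), and this recovers a normal subgroup. The heart of (a) is then the standard fact that in a finite poset, co-ideals are in bijection with antichains via ``take minimal elements'' and ``generate the up-set,'' so $\pp_\Cl$ is a bijection.

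For (b), I would identify $\pp_\Ch$ with the meet-irreducible decomposition already produced in the lemma showing $\NPL(\cR)$ is distributive. There we saw that the meet irreducibles are exactly $\{\UT_{\cR_{[i,j]}}\mid [i,j]\in\Int^\circ(\cR)\}$, one for each interval, and that every $\UT_\cP\triangleleft\UT_\cR$ is the intersection $\bigcap_{[i,j]\in\lambda}\UT_{\cR_{[i,j]}}$ where $\lambda$ is the set of maximal elements of $\Int^\circ(\cR)$ not lying in $\Int^\circ(\cP)$. The main point to verify is that the indexing antichain appearing in $\pp_\Ch$ can be taken to be an element of $\nnopart(\cR)$ and that distinct non-nesting partitions yield distinct intersections; this is exactly the uniqueness clause in the definition of a distributive lattice $\cL=J^\vee(\cM)$, so $\pp_\Ch$ inverts the assignment sending a normal subgroup to its defining antichain of meet irreducibles. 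Thus $\pp_\Ch$ is a bijection. I expect the main obstacle here to be bookkeeping: one must be careful that the minimal-element description in (a) and the maximal-missing-element description in (b) are genuinely the two faces of the same antichain/co-ideal correspondence, and that both land in $\nnopart(\cR)$ rather than in two superficially different index sets.

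Finally, for (c), I would invoke Theorem \ref{NormalLatticeSupercharacterTheory} with $\cN=\NPL(\cR)$: the superclasses are indexed by the nonempty blocks $N_\circ$ for $N\in\cN$, and the supercharacters by the nonempty blocks $X^{N^\bullet}$, both indexed by the normal subgroups themselves. Composing with the bijections of (a) and (b) transports these index sets onto $\nnopart(\cR)$, giving the claimed indexing of both superclasses and supercharacters by non-nesting $\cR$-partitions. Since all the substantive work is contained in (a) and (b), part (c) should follow with only a sentence of assembly.
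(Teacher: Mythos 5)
The paper states this proposition without a written proof, and your argument supplies essentially the intended one: part (a) is the antichain/co-ideal correspondence combined with the earlier characterization of $\UT_\cQ\triangleleft\UT_\cR$ via co-ideals of $\Int^\circ(\cR)$, part (b) is exactly the meet-irreducible decomposition from the lemma showing $\NPL(\cR)$ is a distributive lattice, and part (c) follows by composing with the indexing in Theorem \ref{NormalLatticeSupercharacterTheory}. One correction to your framing: the opening claim that you will exhibit ``the two maps as mutually inverse bijections'' should be dropped, since the remark immediately following the proposition emphasizes that $\pp_\Cl$ and $\pp_\Ch$ do \emph{not} invert one another --- the inverse of $\pp_\Cl$ sends $\lambda$ to the up-set it generates (giving $\UT^\lambda$), whereas the inverse of $\pp_\Ch$ reads off the maximal elements of $\Int^\circ(\cR)$ missing from the co-ideal (giving $\UT_\lambda$), and these are different subgroups in general; fortunately your proofs of (a) and (b) each construct the correct separate inverse, so nothing in the body of your argument depends on that misstatement.
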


\begin{remark} The functions in (a) and (b) do not invert one-another.  However, they are both representation theoretically useful.  Thus for $\lambda\in \nnopart(\cR)$, let
\begin{equation}\label{PartitionToSubgroup}
\begin{split}
\UT^\lambda &= \pp_\Cl^{-1}(\lambda)=\prod_{[i,j]\in \lambda} \UT_{\cR^{\vee}_{[i,j]}}\\
\UT_\lambda&=  \pp_\Ch(\lambda)=\bigcap_{[i,j]\in \lambda}  \UT_{\cR_{[i,j]}} .
\end{split}
\end{equation}
In other words,  in the first case, $\lambda$ specifies the minimal elements in co-ideal of $\Int^\circ(\cR)$, and in the second $\lambda$ specifies the maximal elements not in the co-ideal of $\Int^\circ(\cR)$.
\end{remark}

\begin{example}
When $\cR\in \poset(B)$ is a total order, then $|\nnopart(\cR)|$ is the $|B|$th Catalan number.  In fact, if we view $\UT_\cR$ as upper-triangular matrices, then there is a natural Dyck path for each normal subgroup.  If we let $\ast$ indicate entries that may be nonzero, one such subgroup might be
$$\UT_\lambda=\left[
\begin{tikzpicture}[scale=.5,baseline=2.15cm]
\foreach \x in {1,...,8}
	\node (\x\x) at (\x,9-\x) {$1$};
\foreach \x in {1,...,7}
	{\foreach \y in {\x,...,7}
		\node (\x\y) at (\x,8-\y) {$0$};}
\draw[gray,thick] (.5,8.5) -- (1.5,8.5) -- (1.5,7.5) -- (4.5,7.5) -- (4.5,5.5) -- (5.5,5.5) -- (7.5,5.5) -- (7.5,2.5) -- (8.5,2.5) -- (8.5,.5);
\foreach \x/\y in {3/7,4/7,4/6,5/5,6/5,7/5,6/4,7/4,7/3,8/2}
	\node at (\x,\y) {$0$};
\foreach \x in {2,...,8}
	\node at (\x,8) {$\ast$};
\foreach \y in {6,7}
	{\foreach \x in {5,...,8}
		\node at (\x,\y) {$\ast$};}
\foreach \y in {3,4,5}
	\node at (8,\y) {$\ast$};
\end{tikzpicture}\right]=\UT^\mu.$$
The set partitions $\lambda$ and $\mu$ capture aspects of the Dyck path.  That is, $\lambda$ gives the coordinates of the peaks, so in the example
$\lambda=\{[2,4],[4,7],[7,8]\}$ (where we omit peaks with trivial coordinates), and $\mu$ gives the coordinates of the valleys, so $\mu=\{[1,2],[3,5],[6,8]\}$.
\end{example}

\subsection{The supercharacters of $\UT_\cR$}

By Theorem \ref{NormalLatticeSupercharacterTheory}, $\NPL(\cR)$ corresponds to a normal lattice supercharacter theory $\tS_\cR=\tS_{\NPL(\cR)}$.    By Proposition \ref{NonnestingToPattern}, we have that $\Cl$ and $\Ch$ are indexed by $\nnopart(\cR)$. However, we do this indexing slightly asymmetrically using Proposition \ref{NonnestingToPattern} (a) and (b) for combinatorial reasons.  Let $\lambda,\mu\in \nnopart(\cR)$.   For superclasses, let
$$ \UT^\mu_\circ\quad \text{be the superclass corresponding  to $\mu$,}$$
and for supercharacters, let
\begin{equation}\label{PatternSupercharacterDefinition}
\chi^\lambda=\chi^{\UT_\lambda^\bullet}.
\end{equation}
These different choices make the computation of relevant statistics more straight-forward; for $\UT_\lambda$, the poset partition $\lambda$ identifies the covers of $\UT_\lambda$ and for $\UT^\mu_\circ$, the poset partition $\mu$ gives the entries required to be nonzero to be found in that superclass (or the subgroups that $\UT^\mu$ covers).  In more detail,
$$\cC(\UT_\lambda)=\Big\{\UT_\lambda^{+[i,j]} \mid [i,j]\in \lambda\Big\}\quad \text{where}\quad \UT_\lambda^{+[i,j]}=\{e_{[i,j]}(t)\mid t\in \FF_q\}\UT_\lambda,$$
so
$$\overline{\cC(\UT_\lambda)}=\UT_\lambda\cdot\prod_{[i,j]\in \lambda} \{e_{[i,j]}(t)\mid t\in \FF_q\}.$$
Similarly, if $g\in \UT^\mu_\circ$, then $g\in \overline{\cC(\UT_\lambda)}$ if and only if $[j,k]\in \mu, [i,l]\in \lambda$ with $i\preceq j\prec k\preceq l$ implies $i=j$ and $k=l$.  In fact, if  $g\in \overline{\cC(\UT_\lambda)}$, then
$$g\in \overline{\{ \UT_\lambda^{+[i,j]}\mid [i,j]\in \lambda\cap \mu\}}.$$

Applying \cite[Corollary 3.4]{AT18}, we obtain the following character formula.
\begin{proposition}\label{PatternCharacterFormula}Let $\lambda,\mu\in \nnopart(\cR)$.  For $g\in \UT^\mu_\circ$,
$$\chi^{\lambda}(g)=\left\{\begin{array}{ll}\frac{|\UT_\cR|}{|\UT_\lambda|}\Big(1-\frac{1}{q}\Big)^{|\lambda|} \Big(\frac{1}{1-q}\Big)^{|\lambda\cap \mu|} & \text{if $g\in \overline{\cC(\UT_\lambda)}$,}\\ 0 & \text{otherwise.}
\end{array}\right.$$
\end{proposition}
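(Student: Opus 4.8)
The plan is to substitute the explicit lattice structure of $\NPL(\cR)$ into the general normal-lattice character formula of \cite[Corollary 3.4]{AT18} and then collapse the resulting sum by a short inclusion–exclusion over the covers of $\UT_\lambda$. Recall that in a normal lattice supercharacter theory the supercharacter $\chi^\lambda=\chi^{\UT_\lambda^\bullet}$ is obtained from the inflated regular characters $\sigma_L=\sum_{L\subseteq\ker\psi}\psi(1)\psi$, which satisfy $\sigma_L(g)=\tfrac{|\UT_\cR|}{|L|}$ when $g\in L$ and $0$ otherwise, by Möbius inversion of the relation $\sigma_N=\sum_{L\succeq N}\chi^{L^\bullet}$ up the lattice. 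Thus \cite[Corollary 3.4]{AT18} evaluates the character as
\[
\chi^\lambda(g)=\sum_{L\succeq \UT_\lambda}\mu_{\NPL(\cR)}(\UT_\lambda,L)\,\frac{|\UT_\cR|}{|L|}\,[\,g\in L\,],
\]
the sum running over all normal pattern subgroups $L$ containing $\UT_\lambda$, where $\mu_{\NPL(\cR)}$ denotes the Möbius function of the lattice (not to be confused with the partition $\mu$).

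First I would use that $\NPL(\cR)$ is distributive to determine which terms survive. In a distributive lattice $\mu_{\NPL(\cR)}(\UT_\lambda,L)$ is nonzero precisely when $L$ is a join of covers of $\UT_\lambda$, in which case it equals $(-1)$ raised to the number of covers joined. Since $\cC(\UT_\lambda)=\{\UT_\lambda^{+[i,j]}\mid [i,j]\in\lambda\}$, the surviving $L$ are exactly the products $\overline{S}=\prod_{[i,j]\in S}\UT_\lambda^{+[i,j]}$ for $S\subseteq\lambda$, with $\mu_{\NPL(\cR)}(\UT_\lambda,\overline{S})=(-1)^{|S|}$. Here the hypothesis that $\lambda$ is non-nesting (an antichain in $\Int^\circ(\cR)$) is what makes the covers independent, so that $\overline{S}=\UT_\lambda\prod_{[i,j]\in S}\{e_{[i,j]}(t)\mid t\in\FF_q\}$ has order $|\overline{S}|=q^{|S|}|\UT_\lambda|$. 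Next I would pin down the indicator $[\,g\in\overline{S}\,]$ for $g\in\UT^\mu_\circ$ using the description of $\overline{\cC(\UT_\lambda)}$ preceding the statement: if $g\notin\overline{\cC(\UT_\lambda)}$ then $g$ lies in no $\overline{S}$ (each is contained in $\overline{\cC(\UT_\lambda)}$) and every term vanishes, giving the second case; while if $g\in\overline{\cC(\UT_\lambda)}$ then $g\in\overline{\{\UT_\lambda^{+[i,j]}\mid[i,j]\in\lambda\cap\mu\}}$, so $g\in\overline{S}$ if and only if $\lambda\cap\mu\subseteq S\subseteq\lambda$.

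Putting these together reduces the value to a single binomial sum:
\[
\chi^\lambda(g)=\frac{|\UT_\cR|}{|\UT_\lambda|}\sum_{\lambda\cap\mu\subseteq S\subseteq\lambda}\left(\frac{-1}{q}\right)^{|S|}=\frac{|\UT_\cR|}{|\UT_\lambda|}\left(\frac{-1}{q}\right)^{|\lambda\cap\mu|}\left(1-\frac{1}{q}\right)^{|\lambda|-|\lambda\cap\mu|},
\]
where the last equality expands over the $|\lambda|-|\lambda\cap\mu|$ optional covers. The stated form then follows from the identity $\left(1-\frac1q\right)\frac{1}{1-q}=-\frac1q$, which converts $\left(\frac{-1}{q}\right)^{|\lambda\cap\mu|}\left(1-\frac1q\right)^{-|\lambda\cap\mu|}$ into $\left(\frac{1}{1-q}\right)^{|\lambda\cap\mu|}$. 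I expect the main obstacle to be this middle step: correctly matching the surviving Möbius terms with the superclass data, that is, verifying both that the only relevant subgroups are the cover-joins $\overline{S}$ (which rests on distributivity together with $\lambda$ being an antichain) and that membership $g\in\overline{S}$ is governed exactly by $\lambda\cap\mu\subseteq S$. This is the one place where the two partitions $\lambda$ and $\mu$ genuinely interact, and where the asymmetric indexing of Proposition~\ref{NonnestingToPattern}(a),(b) must be used with care; the remaining binomial and algebraic manipulations are routine.
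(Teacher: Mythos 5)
Your argument is correct and follows the same route as the paper, which simply cites \cite[Corollary 3.4]{AT18} without further derivation; you have in effect unwound that citation via M\"obius inversion on the distributive lattice $\NPL(\cR)$ and verified the specialization. The key combinatorial matching — that the surviving terms are the Boolean joins $\overline{S}$ of covers with $\mu(\UT_\lambda,\overline{S})=(-1)^{|S|}$, and that for $g\in\UT^\mu_\circ\cap\overline{\cC(\UT_\lambda)}$ one has $g\in\overline{S}$ exactly when $\lambda\cap\mu\subseteq S\subseteq\lambda$ — is exactly what the paper's preceding discussion of $\overline{\cC(\UT_\lambda)}$ is set up to supply, and your final binomial manipulation agrees with the stated formula.
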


\begin{remark}
This is very close to a character formula given by \cite{An15} for his non-nesting supercharacter theories for pattern groups. In fact, one may obtain the normal lattice theory supercharacter theory from Andrews' theory be conjugating the superclasses by diagonal matrices.
\end{remark}

\subsection{A restriction formula}

One motivation of this paper is to sort out the implications of the restriction rule in \cite{AT18} for pattern groups.    Let $H\subseteq G$ be a subgroup and suppose we have sublattices of normal subgroups $J^\vee(\cM_G)$ and $J^\vee(\cM_H)$ that are distributive lattices.  We say  $J^\vee(\cM_G)$ and $J^\vee(\cM_H)$ are \textbf{\emph{restriction favorable}} if
\begin{enumerate}
\item[(R1)]  The function
$$\begin{array}{r@{\ }c@{\ }c@{\ }c}\cdot\cap H:&  J^\vee(\cM_G) & \longrightarrow & J^\vee(\cM_H)\\
&N & \mapsto & N\cap H
\end{array}$$
is well-defined (that is, $N\cap H\in J^\vee(\cM_H)$),
\item[(R2)] If $M,N\in J^\vee(\cM_G)$ with $N\in \cC(M)$, then either $M\cap H=N\cap H$ or $N\cap H\in \cC(M\cap H)$.
\end{enumerate}

\begin{theorem}{\cite[Corollary 3.11 and Theorem 3.13]{AT18}}  \label{RestrictionFormula} Let $H\subseteq G$, and suppose $J^\vee(\cM_G)$ and $J^\vee(\cM_H)$ are restriction favorable.  Then for an antichain $\cA\subseteq \cM_G$,
\begin{enumerate}
\item[(a)]  There is a bijection $\cC(\underline{\cA})\rightarrow \cA$ where a cover $O$ corresponds to the unique element $P_O$ in $\cA$ such that $P_O\cap O=\underline{\cA}$.  
 \item[(b)] The restriction
$$\frac{\Res^{G}_{H}(\chi^{\underline{\cA}^{\bullet}})}{\chi^{\underline{\cA}^\bullet}(1)}=\sum_{\underline{\cA_H} \supseteq K\supseteq \overline{\cC(\underline{\cA})}\cap\underline{\cA_H}}\frac{|\overline{\cC(\underline{\cA})}\cap \underline{\cA_H} |(-1)^{|\{Q\in \cC(K)\mid Q\subseteq \underline{\cA_H}\}|}}{|\overline{\cC(K)}\cap \underline{\cA_H}|\chi^{K^\bullet}((\overline{\cC(K)}\cap \underline{\cA_H})_\circ)}\chi^{K^\bullet},$$
where $\cA_H=\{P_{O\cap H}\mid O\in \cC(\underline{\cA}), O\cap H\neq \underline{\cA}\cap H\}$.
\end{enumerate}
 \end{theorem}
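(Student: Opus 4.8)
This statement is imported verbatim from \cite{AT18} (Corollary 3.11 and Theorem 3.13), so in the paper one need only cite it; what follows is the argument I would reconstruct to establish it. The engine of the whole computation is the identity
$$\sum_{\substack{\psi\in\Irr(G)\\ N\subseteq\ker\psi}}\psi(1)\psi=\frac{|G|}{|N|}\,\one_N,$$
valid for any normal subgroup $N$, which follows by recognizing the left-hand side as the regular character of $G/N$ inflated to $G$. Writing $\sigma_N$ for this cumulative sum, the definition $X^{N^\bullet}=\{\psi:N\subseteq\ker\psi\}\setminus\bigcup_{O\in\cC(N)}\{\psi:O\subseteq\ker\psi\}$, together with the fact that $O\subseteq\ker\psi$ for all $O$ in a set $S$ is equivalent to $\overline{S}\subseteq\ker\psi$, gives by inclusion--exclusion over the covers
$$\chi^{N^\bullet}=\sum_{S\subseteq\cC(N)}(-1)^{|S|}\sigma_{\overline{S}}=\sum_{S\subseteq\cC(N)}(-1)^{|S|}\frac{|G|}{|\overline{S}|}\,\one_{\overline{S}},$$
with the convention $\overline{\emptyset}=N$. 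Thus every supercharacter is an explicit signed combination of indicators of the normal subgroups in the interval $[N,\overline{\cC(N)}]$, which is Boolean by distributivity; conversely the relation $\sigma_N=\sum_{M\supseteq N}\chi^{M^\bullet}$ (obtained by sorting each $\psi$ according to the largest lattice element contained in $\ker\psi$) inverts this.

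For part (a) I would argue purely lattice-theoretically inside $J^\vee(\cM_G)$. Given a cover $O$ of $\underline{\cA}$ and $P\in\cA$, distributivity forces $\underline{\cA}\preceq P\cap O\preceq O$, so $P\cap O\in\{\underline{\cA},O\}$; if $P\cap O=O$ held for every $P\in\cA$ then $O\preceq\bigwedge_{P\in\cA}P=\underline{\cA}$, contradicting $\underline{\cA}\prec O$, so at least one $P$ satisfies $P\cap O=\underline{\cA}$. Uniqueness of this $P_O$ and the bijectivity of $O\mapsto P_O$ then follow from Birkhoff duality: identifying $J^\vee(\cM_G)$ with its order-ideal model makes the covers of $\underline{\cA}$ correspond to the minimal elements one may adjoin, which are matched precisely with the defining antichain $\cA$.

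For part (b), the plan is to restrict the expansion of the first paragraph and then re-expand in the $H$-basis. Since each $\overline{S}\cap H$ lies in $J^\vee(\cM_H)$ by (R1), and $\Res^{G}_{H}\one_{\overline{S}}=\one_{\overline{S}\cap H}$, the restriction is again a signed sum of indicators of lattice-normal subgroups of $H$. One checks $\one_M$ is constant on $H$-superclasses (if $M'\nsubseteq M$ then $M'_\circ\cap M=\varnothing$, because $M'\cap M$ sits below a subgroup covered by $M'$), so $\Res^{G}_{H}\chi^{\underline{\cA}^\bullet}\in\f(H;\Cl_H)$ and therefore expands in $\{\chi^{K^\bullet}\}$. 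I would solve for the coefficients using the triangularity of supercharacters against superclasses: $\chi^{K^\bullet}$ is supported on a controlled up-set of superclasses, so evaluating both sides at the representative $(\overline{\cC(K)}\cap\underline{\cA_H})_\circ$ isolates the coefficient of $\chi^{K^\bullet}$ and produces the displayed ratio, while (R2) together with part (a) fixes the antichain $\cA_H$ that records which covers of $\underline{\cA}$ survive intersection with $H$.

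The main obstacle is the bookkeeping in part (b): the restricted supercharacter arrives indexed by subsets $S\subseteq\cC(\underline{\cA})$, and this double inclusion--exclusion must be reorganized, after intersecting with $H$, into a single sum over $K$ in the interval $[\,\overline{\cC(\underline{\cA})}\cap\underline{\cA_H},\,\underline{\cA_H}\,]$. Verifying that the two nested inclusion--exclusions collapse to the clean coefficient — and in particular that the covers $O$ with $O\cap H=\underline{\cA}\cap H$ are exactly the ones dropped from $\cA_H$ — is the delicate point; this is where restriction-favorability is indispensable and where I expect the bulk of the work to lie.
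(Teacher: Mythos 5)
The paper does not prove this theorem: it is imported verbatim from \cite{AT18}, so your opening observation is the operative one and there is no in-paper argument to compare your reconstruction against. Judged on its own terms, your outline is consistent with how \cite{AT18} develops the theory. The identity $\sum_{N\subseteq\ker\psi}\psi(1)\psi=\tfrac{|G|}{|N|}\one_N$ and the inclusion--exclusion over $\cC(N)$ are indeed the source of the character formula this paper reuses as Proposition \ref{PatternCharacterFormula}, and your argument for (a) is essentially complete: the existence step is airtight, and uniqueness together with bijectivity is the standard correspondence in a finite distributive lattice between the covers of $\underline{\cA}$ and the (automatically irredundant) antichain of meet-irreducibles meeting to $\underline{\cA}$, so deferring that to Birkhoff duality is legitimate.

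For (b), what you have written is a plan rather than a proof. Two load-bearing claims are asserted but not established: first, that $\chi^{L^\bullet}$ vanishes on the superclass $(\overline{\cC(K)}\cap\underline{\cA_H})_\circ$ for every other lattice element $L$ occurring in the expansion of the restriction --- this is precisely what would justify extracting the coefficient of $\chi^{K^\bullet}$ by evaluation and what explains the factor $\chi^{K^\bullet}((\overline{\cC(K)}\cap\underline{\cA_H})_\circ)$ in the denominator; and second, that the double inclusion--exclusion indexed by $S\subseteq\cC(\underline{\cA})$ collapses, after intersecting with $H$, into a single signed sum over the interval between $\overline{\cC(\underline{\cA})}\cap\underline{\cA_H}$ and $\underline{\cA_H}$ with the stated sign $(-1)^{|\{Q\in\cC(K)\mid Q\subseteq\underline{\cA_H}\}|}$, with (R2) controlling which covers survive. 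You correctly flag these as the crux and as the place where restriction-favorability is used, so the gap is acknowledged rather than hidden; but without carrying out at least one of these computations the displayed formula is not actually derived. Since the paper only cites the result, this does not affect the paper itself, but your reconstruction of (b) should be regarded as an incomplete sketch rather than a proof.
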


In our case, all pattern subgroup containments turn out to be restriction favorable.

\begin{lemma}
Let $\cR,\cQ\in \poset(B)$ with $\UT_\cQ\subseteq \UT_\cR$.  Then $\NPL(\cR)$ and $\NPL(\cQ)$ are restriction favorable.
\end{lemma}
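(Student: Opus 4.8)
The plan is to push everything through the dictionary between $\NPL(\cdot)$ and co-ideals of the proper-interval posets, so that both (R1) and (R2) reduce to a single monotonicity fact. Put $G=\UT_\cR$ and $H=\UT_\cQ$. Since $\UT_\cQ\subseteq\UT_\cR$ forces $\cQ$ to be a subposet of $\cR$, we have $\Int^\circ(\cQ)\subseteq\Int^\circ(\cR)$, and the interval-containment order on $\Int^\circ(\cQ)$ is the restriction of the one on $\Int^\circ(\cR)$; concretely, for $[i,j],[k,l]\in\Int^\circ(\cQ)$, $[i,j]\preceq_{\Int^\circ(\cQ)}[k,l]$ implies $[i,j]\preceq_{\Int^\circ(\cR)}[k,l]$ by monotonicity of the inclusion $\cQ\hookrightarrow\cR$. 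By the normality proposition and the distributive-lattice lemma, $\NPL(\cR)$ (resp.\ $\NPL(\cQ)$) consists exactly of the $\UT_\cN$ with $\Int^\circ(\cN)$ a co-ideal of $\Int^\circ(\cR)$ (resp.\ of $\Int^\circ(\cQ)$), and covers are recorded by adjoining a single maximal element of the complementary ideal.

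For (R1) I would first record the support identity $\UT_\cO\cap\UT_\cP=\UT_{\cO\cap\cP}$, valid for any subposets $\cO,\cP$ of $\cR$, with $\Int^\circ(\cO\cap\cP)=\Int^\circ(\cO)\cap\Int^\circ(\cP)$; this follows purely from the description of $\UT_\cO$ as the pattern matrices whose off-diagonal support lies in $\Int^\circ(\cO)$, and in particular does not use normality of either factor. Applying it to $\UT_\cN\in\NPL(\cR)$ and $H=\UT_\cQ$ gives $\UT_\cN\cap H=\UT_{\cN\cap\cQ}$, so it suffices to see that $\Int^\circ(\cN)\cap\Int^\circ(\cQ)$ is a co-ideal of $\Int^\circ(\cQ)$. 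If $[i,j]$ lies in it and $[k,l]\in\Int^\circ(\cQ)$ dominates $[i,j]$ in $\Int^\circ(\cQ)$, then the monotonicity fact lifts this to $\Int^\circ(\cR)$, and the co-ideal property of $\Int^\circ(\cN)$ in $\Int^\circ(\cR)$ forces $[k,l]\in\Int^\circ(\cN)\cap\Int^\circ(\cQ)$. The normality proposition then yields $\UT_\cN\cap H\in\NPL(\cQ)$.

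For (R2), write a cover $N$ of $M$ as $\Int^\circ(N)=\Int^\circ(M)\sqcup\{[a,b]\}$ with $[a,b]$ maximal in $\Int^\circ(\cR)-\Int^\circ(M)$. Intersecting with $\Int^\circ(\cQ)$ splits into two cases. If $[a,b]\notin\Int^\circ(\cQ)$ then $\Int^\circ(N)\cap\Int^\circ(\cQ)=\Int^\circ(M)\cap\Int^\circ(\cQ)$, giving $M\cap H=N\cap H$. If $[a,b]\in\Int^\circ(\cQ)$ then $N\cap H$ arises from $M\cap H$ by adjoining the single new element $[a,b]$, and it remains to check that $[a,b]$ is maximal in $\Int^\circ(\cQ)-(\Int^\circ(M)\cap\Int^\circ(\cQ))$, which is exactly the statement that $N\cap H\in\cC(M\cap H)$. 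But any $[c,d]\in\Int^\circ(\cQ)$ strictly dominating $[a,b]$ in $\Int^\circ(\cQ)$ strictly dominates it in $\Int^\circ(\cR)$ by monotonicity, whence maximality of $[a,b]$ downstairs forces $[c,d]\in\Int^\circ(M)$, i.e.\ $[c,d]\in\Int^\circ(M)\cap\Int^\circ(\cQ)$.

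I expect no real obstacle here: once the monotonicity of the two interval orders is isolated, the rest is bookkeeping with co-ideals. The only points deserving care are (i) that the intersection formula $\UT_\cO\cap\UT_\cP=\UT_{\cO\cap\cP}$ be invoked in the generality where $H=\UT_\cQ$ need not be normal in $G$ — which is fine, since it is a statement about supports rather than about normality — and (ii) the verification in the second case of (R2) that restriction cannot shorten a covering step into a non-covering one, i.e.\ that maximality of $[a,b]$ is inherited by $\Int^\circ(\cQ)$; this is precisely where the monotonicity fact is used a second time.
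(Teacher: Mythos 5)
Your proof is correct, and for (R2) it is essentially the paper's argument recast in the language of co-ideals: the paper parametrizes the covers of $\UT_\lambda$ as $\UT_\lambda^{+[i,j]}$ for $[i,j]\in\lambda$ and observes that intersecting with $\UT_\cQ$ either collapses the cover (when $i\nprec_\cQ j$) or preserves it; your case split on whether $[a,b]\in\Int^\circ(\cQ)$ is the same dichotomy, and you supply the maximality verification that the paper leaves implicit. Where you genuinely diverge is (R1): the paper gets normality of $\UT_\cP\cap\UT_\cQ$ in $\UT_\cQ$ for free from the diamond isomorphism theorem and only needs the support identity $\UT_\cP\cap\UT_\cQ=\UT_{\cP\cap\cQ}$ to see the intersection is again a pattern subgroup, whereas you reverify the co-ideal condition combinatorially; your route is more self-contained but reproves a fact general group theory hands you. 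One small caution: your opening claim that the order on $\Int^\circ(\cQ)$ \emph{is} the restriction of the order on $\Int^\circ(\cR)$ is false in general --- two intervals of $\cQ$ can be nested in $\Int^\circ(\cR)$ without being comparable in $\Int^\circ(\cQ)$, since $\cQ$ may be missing the endpoint relations. Fortunately every step of your argument uses only the one-directional implication $[i,j]\preceq_{\Int^\circ(\cQ)}[k,l]\Rightarrow[i,j]\preceq_{\Int^\circ(\cR)}[k,l]$, which is the version you actually state "concretely," so nothing breaks; you should just delete the stronger assertion.
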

\begin{proof}
Let $\cM_\cR=\cM_{\UT_\cR}$ and $\cM_\cQ=\cM_{\UT_\cQ}$ be the respective meet irreducible elements.  Given $\UT_{\cP}\in \NPL(\cR)$, we have that $\UT_\cP\cap \UT_\cQ\triangleleft \UT_{\cQ}$ by the diamond isomorphism theorem, and since $\UT_\cP\cap \UT_\cQ=\UT_{\cP\cap\cQ}$ it is also a pattern subgroup.  We can conclude that the function $\cdot\cap \UT_\cQ$ is well-defined, giving us (R1).  

Suppose that $\lambda\in \nnopart(\cR)$ and $[i,j]\in \lambda$.  Then
$$\UT_\lambda^{+[i,j]}\cap \UT_\cQ=\UT_\lambda\cap \UT_\cQ\quad \text{if and only if} \quad i\nprec_\cQ j,$$
and if $i\prec_\cQ j$, then $\UT_\lambda^{+[i,j]}\cap \UT_\cQ$ covers $\UT_\lambda\cap \UT_\cQ$, giving (R2).
\end{proof}

 For $\lambda\in \nnopart(\cR)$, let
$$\lambda_\cQ=\{[i,j]\in\lambda\mid i\prec_\cQ j\}\in \nnopart(\cQ).$$
We can now apply Theorem \ref{RestrictionFormula} to our situation.

\begin{corollary} \label{PatternRestriction}
Let $\cQ$ be a subposet of $\cR$.  For $\lambda\in \nnopart(\cR)$,
$$\Res^{\UT_\cR}_{\UT_{\cQ}}(\chi^\lambda)=\frac{1}{|\UT_{\lambda_\cQ}/(\UT_\lambda\cap\UT_\cQ)|}\frac{\chi^\lambda(1)}{\chi^{\lambda_\cQ}(1)}\sum_{\nu\in \nnopart(\cQ)\atop\UT_{\lambda_\cQ}\supseteq \UT_{\nu}\supseteq \UT_{\lambda}\cap \UT_\cQ} \chi^\nu.$$
\end{corollary}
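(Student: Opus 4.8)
The plan is to specialize Theorem \ref{RestrictionFormula} to $G=\UT_\cR$, $H=\UT_\cQ$ and the restriction-favorable lattices $\NPL(\cR)$, $\NPL(\cQ)$ from the preceding lemma. Take the antichain $\cA=\{\UT_{\cR_{[i,j]}}\mid [i,j]\in\lambda\}\subseteq\cM_\cR$, so that $\underline{\cA}=\UT_\lambda$ and $\chi^{\underline{\cA}^\bullet}=\chi^\lambda$. By the discussion preceding Proposition \ref{PatternCharacterFormula} the covers of $\UT_\lambda$ are the $\UT_\lambda^{+[i,j]}$ for $[i,j]\in\lambda$, and the bijection of part (a) sends $\UT_\lambda^{+[i,j]}\mapsto\UT_{\cR_{[i,j]}}$. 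The restriction-favorable lemma then shows $\UT_\lambda^{+[i,j]}\cap\UT_\cQ\neq\UT_\lambda\cap\UT_\cQ$ exactly when $i\prec_\cQ j$, i.e. when $[i,j]\in\lambda_\cQ$, and in that case the intersection covers $\UT_\lambda\cap\UT_\cQ$ by adjoining $[i,j]$, corresponding to $\UT_{\cQ_{[i,j]}}\in\cM_\cQ$. Hence $\cA_H=\{\UT_{\cQ_{[i,j]}}\mid [i,j]\in\lambda_\cQ\}$ and, by Proposition \ref{NonnestingToPattern}(b) applied to $\cQ$, $\underline{\cA_H}=\UT_{\lambda_\cQ}$.

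To pin down the summation range I would translate everything into co-ideals of $\Int^\circ$, writing each normal pattern subgroup as its set of positions allowed to be nonzero. Using that $\preceq_{\Int^\circ(\cQ)}$ implies $\preceq_{\Int^\circ(\cR)}$, and that $\lambda$ is disjoint from the co-ideal of $\UT_{\lambda_\cQ}$, a short computation gives $\overline{\cC(\UT_\lambda)}\cap\UT_{\lambda_\cQ}=\UT_\lambda\cap\UT_\cQ$. Thus $K$ ranges over the $\UT_\nu$, $\nu\in\nnopart(\cQ)$, with $\UT_{\lambda_\cQ}\supseteq\UT_\nu\supseteq\UT_\lambda\cap\UT_\cQ$, which is exactly the index set of the corollary.

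The core of the argument is to show the coefficient of $\chi^\nu$ in Theorem \ref{RestrictionFormula}(b) does not depend on $\nu$. For $K=\UT_\nu$ in this range, set $s_\nu=|\{Q\in\cC(\UT_\nu)\mid Q\subseteq\UT_{\lambda_\cQ}\}|$; this counts the arcs $[i,j]\in\nu$ lying in the co-ideal of $\UT_{\lambda_\cQ}$. The subgroup $N=\overline{\cC(\UT_\nu)}\cap\UT_{\lambda_\cQ}$ is obtained from $\UT_\nu$ by adjoining exactly those $s_\nu$ positions, so $|N|=q^{s_\nu}|\UT_\nu|$, and the representative $(N)_\circ$ lies in a superclass $\UT^\mu_\circ$ with $|\nu\cap\mu|=s_\nu$. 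Evaluating $\chi^\nu((N)_\circ)$ by Proposition \ref{PatternCharacterFormula} and simplifying, the signs and powers of $q$ collapse and the coefficient becomes $\frac{|\UT_\lambda\cap\UT_\cQ|}{|\UT_\cQ|}(1-1/q)^{s_\nu-|\nu|}$.

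It remains to show $|\nu|-s_\nu$ equals the constant $|\lambda_\cQ|$, and here I expect the main obstacle: one must prove that for every $\nu$ in the interval the arcs of $\nu$ outside the co-ideal of $\UT_{\lambda_\cQ}$ are precisely $\lambda_\cQ$. This is exactly where the non-nesting (antichain) hypothesis is essential: because $\lambda$ is an antichain in $\Int^\circ(\cR)$, each $[i,j]\in\lambda_\cQ$ stays maximal in the ideal complementary to the co-ideal of any $\UT_\nu$ in the interval, forcing $\lambda_\cQ\subseteq\nu$; conversely any arc of $\nu$ inside that ideal is maximal there and so lies in $\lambda_\cQ$. Granting this, the coefficient equals $\frac{|\UT_\lambda\cap\UT_\cQ|}{|\UT_\cQ|}(1-1/q)^{-|\lambda_\cQ|}$ for all $\nu$. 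Multiplying by $\chi^\lambda(1)$ (recall part (b) computes the normalized restriction) and rewriting via $\chi^{\lambda_\cQ}(1)=\frac{|\UT_\cQ|}{|\UT_{\lambda_\cQ}|}(1-1/q)^{|\lambda_\cQ|}$ together with $|\UT_{\lambda_\cQ}/(\UT_\lambda\cap\UT_\cQ)|=|\UT_{\lambda_\cQ}|/|\UT_\lambda\cap\UT_\cQ|$ then produces the stated scalar, completing the proof.
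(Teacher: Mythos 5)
Your proposal is correct and follows essentially the same route as the paper: it specializes Theorem \ref{RestrictionFormula} with the same identifications ($\underline{\cA}=\UT_\lambda$, $\underline{\cA_H}=\UT_{\lambda_\cQ}$, $\overline{\cC(\UT_\lambda)}\cap\UT_{\lambda_\cQ}=\UT_\lambda\cap\UT_\cQ$), evaluates the coefficient via Proposition \ref{PatternCharacterFormula}, and observes it is independent of $\nu$ because $\lambda\cap\nu=\lambda_\cQ$. If anything, you justify that last antichain fact more explicitly than the paper's proof does (the paper defers it to the subsequent Lemma \ref{RestrictionCombinatorics}).
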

\begin{proof}
We begin by examining the ingredients for Theorem \ref{RestrictionFormula}.  We have,
\begin{align*}
\UT_\lambda&=\bigcap_{[i,j]\in \lambda} \UT_{\cR_{[i,j]}}\\
\cC(\UT_\lambda)&=\{\UT_\cR^{+[i,j]}\mid [i,j]\in \lambda\}\\
\overline{\cC(\UT_\lambda)}&= \UT_\lambda\prod_{[i,j]\in \lambda} \{e_{[i,j]}(t)\mid t\in \FF_q\}\\
\chi^\lambda(1)&=\frac{|\UT_\cR|}{|\UT_\lambda|} \frac{(q-1)^{|\lambda|}}{q^{|\lambda|}}\\
(\UT_\lambda)_{\UT_\cQ}&=\UT_{\lambda_\cQ}\\
\underline{(\UT_\lambda)_{\UT_\cQ}}\cap\overline{\cC(\UT_\lambda)} &=\UT_\lambda\cap\UT_\cQ.
\end{align*}
Thus,
\begin{align*}
\Res^{\UT_\cR}_{\UT_{\cQ}}(\chi^\lambda)&=\chi^\lambda(1)\sum_{(\UT_{\lambda_\cQ}\supseteq \UT_\nu\supseteq \UT_\lambda\cap \UT_\cQ} \frac{|\UT_\lambda\cap \UT_\cQ|}{|\UT_\nu|q^{|\nu-\lambda|}} \frac{(-1)^{|\nu-\lambda|}}{\frac{|\UT_\cQ|}{|\UT_\nu|}q^{|\nu|}(-1)^{|\nu-\lambda|} (q-1)^{|\nu\cap \lambda|}}\chi^\nu\\
&=\chi^\lambda(1)\sum_{\UT_{\lambda_\cQ}\supseteq \UT_\nu\supseteq \UT_\lambda\cap \UT_\cQ} \frac{|\UT_\lambda\cap \UT_\cQ|}{|\UT_\cQ|} \frac{q^{|\lambda\cap\nu|}}{ (q-1)^{|\nu\cap \lambda|}}\chi^\nu.
\end{align*}
Note that $\lambda\cap \nu=\lambda_\cQ$ is independent of $\nu$, so
\begin{equation*}
\Res^{\UT_\cR}_{\UT_{\cQ}}(\chi^\lambda)=\frac{\chi^\lambda(1)}{\chi^{\lambda_\cQ}(1)}\frac{|\UT_\lambda\cap\UT_\cQ|}{|\UT_{\lambda_\cQ}|}\sum_{\UT_{\lambda_\cQ}\supseteq \UT_\nu\supseteq \UT_\lambda\cap \UT_\cQ}\chi^\nu,
\end{equation*}
as desired.
\end{proof}

For $\lambda\in \nnopart(\cR)$ and $\mu\in \nnopart(\cQ)$, consider the subposet of $\Int(\cQ)$ given by
$$\Int^{\lambda}_\mu=\Big\{[i,j]\in \Int^\circ(\cQ)\mid [i,j]\in \bigcap_{[k,l]\in \mu} \cQ_{[k,l]}, [i,j]\notin\bigcap_{[k,l]\in \lambda} \cR_{[k,l]}\Big\}.$$
For example, if $\cR=\{1<2<\cdots<8\}$, $\lambda=\{[2,4],[4,7],[7,8]\}$, $\cQ=\{1<2<\cdots<6\}$, and $\mu=\lambda_\cQ=\{[2,4]\}$ then 
$$\left[
\begin{tikzpicture}[scale=.5,baseline=2.15cm]
\foreach \x in {1,...,8}
	\node (\x\x) at (\x,9-\x) {$1$};
\foreach \x in {1,...,7}
	{\foreach \y in {\x,...,7}
		\node (\x\y) at (\x,8-\y) {$0$};}
\draw[gray,thick] (.5,8.5) -- (1.5,8.5) -- (1.5,7.5) -- (4.5,7.5) -- (4.5,5.5) -- (5.5,5.5) -- (7.5,5.5) -- (7.5,2.5) -- (8.5,2.5) -- (8.5,.5);
\draw[gray, thick, dotted] (.5,8.5) -- (6.5,8.5) -- (6.5,2.5) -- (7.5,2.5) -- (7.5,1.5) -- (8.5,1.5);
\foreach \x/\y in {3/7,4/7,4/6,7/5,7/4,7/3,8/2}
	\node at (\x,\y) {$\cdot$};
\foreach \x/\y in {2/8,3/8,4/8,5/8,6/8,5/7,6/7,5/6,6/6,5/5,6/5,6/4}
	\node at (\x,\y) {$\Box$};
\foreach \x in {2,...,8}
	\node at (\x,8) {$\ast$};
\foreach \y in {6,7}
	{\foreach \x in {5,...,8}
		\node at (\x,\y) {$\ast$};}
\foreach \y in {3,4,5}
	\node at (8,\y) {$\ast$};
\end{tikzpicture}\right]\qquad \text{where} \qquad \begin{array}{l}
\bigcap_{[k,l]\in \lambda} \cR_{[k,l]} \text{ $\ast$ or $\Box\hspace{-.25cm}\ast$\ -entries,}\\\\
\bigcap_{[k,l]\in \mu} \cQ_{[k,l]} \text{ $\Box$ or $\Box\hspace{-.25cm}\ast$\ -entries},\\\\
\Int_\mu^\lambda \text{ $\Box$-entries.}
\end{array}$$

Note that while $\Int_\mu^\lambda$ is a subposet of $\Int(\cQ)$ since it is only closed under meets (and not necessarily joins), it does not necessarily correspond to a subposet of $\cQ$.    However, the poset gives us a combinatorial description of the sum in the restriction.

\begin{lemma}\label{RestrictionCombinatorics} Let $\UT_\cQ\subseteq \UT_\cR$ and $\lambda\in \nnopart(\cR)$.  Then
$$\{\mu\in \nnopart(\cQ)\mid \UT_{\lambda_\cQ}\supseteq \UT_{\mu}\supseteq \UT_{\lambda}\cap \UT_\cQ\}=\Big\{\nu\cup \lambda_\cQ\mid \nu\in \anti\Big(\Int^{\lambda}_{\lambda_\cQ}\Big)\Big\}.$$
\end{lemma}

\begin{proof}
We may re-write
$$\Int_{\lambda_\cQ}^\lambda=\bigg\{[i,j] \in \Int^{\circ}(\cQ)\ \bigg|\   \begin{array}{@{}l@{}} \text{$[i,j]\not\preceq_{\Int^{\circ}(\cQ)} [k,l]$ for all $[k,l]\in \lambda_\cQ$,}\\  \text{$[i,j]\preceq_{\Int^{\circ}(\cR)} [k,l]$ for some $[k,l]\in \lambda$}\end{array}\bigg\}$$ 
and 
$$ \UT_\lambda=\prod_{ [i,j]\not\preceq_{\Int^{\circ}(\cR)} [k,l] \atop \text{~for all~}[k,l]\in \lambda\ } e_{[i,j]}(t) \qquad\text{(with $e_{[i,j]}(t)$ as in (\ref{PatternGenerators}))}.$$ 
For $\nu$ an anti-chain in  $\Int^{\lambda}_{\lambda_\cQ}$, it is clear that $\UT_{\nu\cup \lambda_\cQ} \subseteq \UT_{\lambda_\cQ}$. Also, if $e_{[i,j]}\in \UT_{\lambda}\cap \UT_\cQ$, then $[i,j]\in \Int^\circ (\cQ)$ and $[i,j]\not\preceq_{\Int^{\circ}(\cR)} [k,l] \text{~for all~}[k,l]\in \lambda,$ which implies that $[i,j]\not\preceq_{\Int^{\circ}(\cQ)} [k,l] \text{~for all~}[k,l]\in \lambda_\cQ$. Moreover, if $[i,j]\preceq_{\Int^\circ(\cQ)} [k,l]$ for some $[k,l]\in \nu$, then by the definition of $\nu$, there is $[r,s]\in \lambda$ such that $[k,l]\preceq_{\Int^\circ(\cR)} [r,s].$ Thus, $[i,j]\preceq_{\Int^\circ(\cR)}[r,s]$, a contradiction. Therefore,  $[i,j]\not \preceq_{\Int^\circ(\cQ)} [k,l]$ for all $[k,l]\in \nu$, and so $e_{[i,j]}(t)\in \UT_\nu$. Hence, we showed that $\UT_{\lambda_\cQ}\supseteq \UT_{\nu}\cap \UT_{\lambda_\cQ}\supseteq \UT_{\lambda} \cap \UT_\cQ$.

Conversely, we show that if $\UT_{\lambda_\cQ}\supseteq \UT_{\mu}\supseteq \UT_{\lambda}\cap \UT_\cQ$, then $\mu=\nu\cup \lambda_\cQ$ for some antichain $\nu$ of $\Int_{\lambda_Q}^\lambda$. Suppose that instead $\lambda_\cQ \not\subseteq \mu$ and $[i,j]\in \lambda_\cQ-\mu$. Then for some $[k,l]\in \mu$, $[i,j]\preceq_{\Int^\circ(\cQ)}[k,l]$, and so $e_{[k,l]}(t)\subseteq \UT_\lambda \cap \UT_\cQ$, which means $[k,l]\notin\mu$, a contradiction. Therefore, $\lambda_\cQ\subseteq \mu$. Also, for every $[i,j]\in \mu - \lambda_\cQ$, there is $[k,l]\in \lambda$ such that $[i,j]\preceq_{\Int^\circ(\cR)} [k,l]$, and since $\UT_{\lambda_\cQ}\supseteq \UT_\mu$, $[i,j]\not\preceq_{\Int^\circ(\cQ)}[k,l]$ for all $[k,l]\in \lambda_\cQ$. Therefore $[i,j]\in \Int^\lambda_{\lambda_\cQ}$, and since $\mu \in \nnopart(\cQ)$, $\mu- \lambda_\cQ$ is a antichain in $\Int^\lambda_{\lambda_\cQ}$.
\end{proof}

\section{A Hopf monoid}\label{Monoid}

The goal of this section is to revisit a Hopf monoid defined in \cite{An15} built out of the representation theory of pattern groups.  For a more detailed background on Hopf monoids, we recommend \cite{AM10}.  While there has been more literature on Hopf algebras, it appears that Hopf monoids seem to be especially well-suited to the representation theory of unipotent groups \cite{ABT13}.  As it happens, we can easily recover a corresponding Hopf algebra as a quotient, but the monoid structure allows easier computations than in the Hopf algebra.  

We begin with the definition of the main Hopf monoid $\pattern_\NPL$, and then give the structure constants on the main bases.  We then show that $\pattern_\NPL$ is free as a monoid, and compute the antipode on several of our favorite bases.  We conclude with a construction of the primitive elements. 

\subsection{The pattern group Hopf monoid}
Define a vector species $\pattern:\{\text{sets}\}\rightarrow \{\FF_q\text{-modules}\}$ by
$$\pattern[A]=\bigoplus_{\cR\in \poset(A)} \f(\UT_\cR).$$

Let $\cP\in \poset(A)$ and $\cQ\in \poset(B)$ with $A\cap B=\emptyset$.  The \textbf{\emph{concatenation}}  $\cP.\cQ\in \poset(A\cup B)$ of $\cP$ with $\cQ$ is given by
\begin{equation*}
i\preceq_{\cP.\cQ}j \quad\text{if $i\preceq_\cP j$, $i\preceq_\cQ j$ or $i\in A$ and $j\in B$.}  
\end{equation*}
There is a corresponding projection $\pi_{A,B}: \UT_{\cP.\cQ}\rightarrow \UT_\cP\times \UT_\cQ$ given by
$$\pi_{A,B} (u) ([i,j])=\left\{\begin{array}{ll} 
u([i,j]) & \text{if $i,j\in A$ or $i,j\in B$,}\\
0 & \text{otherwise.}
\end{array}\right.$$ 

Given $\cP\in \poset(C)$ and $A\subseteq C$, the  \textbf{\emph{restriction}} $\cP|_A\in \poset(A)$ of $\cP$ to $A$ is given by
$$i\preceq_{\cP|_A} j\quad \text{if $i\preceq_\cP j$}.$$
There is a corresponding injective function $\iota_{A, B}:\UT_{\cP|_A}\times \UT_{\cP|_B}\rightarrow \UT_\cP$ given by
$$\iota_{A, B}(u,v)([i,j])=\left\{\begin{array}{ll} u([i,j]) & \text{if $i,j\in A$}\\ v([i,j]) & \text{if $i,j\in B$} \\ 0 & \text{otherwise.}\end{array}\right.$$
These constructions give us a product and coproduct on $\pattern$ via
$$\begin{array}{r@{\ }c@{\ }c@{\ }c} 
m_{A,B}:  & \f(\UT_\cP)\otimes  \f(\UT_\cQ) & \longrightarrow & \f(\UT_{\cP.\cQ})\\
& \chi\otimes \psi & \mapsto & (\chi,\psi)\circ \pi_{A,B}\end{array}
\quad\text{and}\quad
\begin{array}{r@{\ }c@{\ }c@{\ }c} 
\Delta_{A,B}:  & \f(\UT_\cP) & \longrightarrow &  \f(\UT_{\cP|_A})\otimes  \f(\UT_{\cP|_B})\\
& \psi & \mapsto & \psi\circ \iota_{A,B},\end{array}$$
where these again give us the functors of inflation and restriction, respectively.  As shown in \cite{An15}, these functions are compatible and give us a Hopf monoid.   Furthermore, every supercharacter theory on pattern groups that are compatible with inflation and restriction give us a sub Hopf monoid.    Our focus for the rest of the paper will be on the sub Hopf monoid
$\pattern_\NPL:\{\text{sets}\}\rightarrow \{\FF_q\text{-spaces}\}$ given by
$$\pattern_\NPL[A]=\bigoplus_{\cR\in \poset(A)} \f(\UT_\cR;\Cl(\cR)),$$
where $\Cl(\cR)$ is the superclass partition given in (\ref{PatternNormalLattice}).

\subsection{Some related monoids}

We already have that $\pattern_\NPL\subseteq \pattern$ as Hopf monoids.  In fact, by varying the underlying supercharacter theory, we obtain various additional monoids (assuming these new supercharacter theories are compatible with restriction and inflation).  In particular, $\pattern_\NPL$ is contained in Andrews' Hopf monoid which is itself a sub Hopf monoid of the monoid corresponding to the usual algebra group supercharacter theory (\ref{AlgebraGroup}).  However, this latter theory remains relatively mysterious for general pattern groups.

Another approach is to find submonoids in $\pattern_\NPL$.  For example, if 
$$\pattern_\poset[A]=\bigoplus_{\cR\in \poset(A)} \CC\spanning\{\chi^{\emptyset_\cR}\},\quad \text{where}\quad \emptyset_\cR=\emptyset\in \nnopart(\cR),$$
then $\pattern_\poset$ is isomorphic to the usual Hopf monoid on posets.  

Given a poset $\cR\in \poset(A)$, recall that the \textbf{\emph{width}} $\width(\cR)$ of $\cR$ is the size of the maximal anti-chain.  For $w\in \ZZ_{\geq 1}$, let 
$$\pattern_\NPL^{(w)}[A]=\bigoplus_{\cR\in \poset(A)\atop \width(\cR)\leq w} \f(\UT_\cR;\Cl(\cR)).$$
Then it follows from the general product and coproduct definitions that $\pattern_\NPL^{(w)}$ is a sub Hopf monoid of $\pattern_\NPL$.  In fact, $\pattern_\NPL^{(1)}$ is a subset of the Hopf monoid $\mathbf{scf}(U)$ in \cite{ABT13} that gives the Hopf algebra $\mathbf{SC}$ isomorphic to the Hopf algebra  of symmetric functions in noncommuting variables $\mathrm{NCSym}$ \cite{AABB12}.  In fact, if $\mathrm{Sym}$ is the Hopf algebra of symmetric functions in commuting variables, then there is a functor $\pi:\mathbf{scf}(U)\rightarrow \mathbf{SC}(U)$ and an isomorphism $\mathrm{ch}:\mathbf{SC}\rightarrow \mathrm{NCSym}$ (sending superclass identifier functions to monomial symmetric functions) and a surjective homomorphism $\mathrm{rlx}:\mathrm{NCSym}\rightarrow \mathrm{Sym}$ (allowing variables to commute).

\begin{proposition}
If $\mathbf{SC}_\NPL$ denotes the subalgebra obtained from $\pattern_\NPL^{(1)}\subseteq \mathbf{scf}(U)$, then
$$\mathrm{Sym}=\mathrm{rlx}\circ \mathrm{ch}(\mathbf{SC}_\NPL).$$
\end{proposition}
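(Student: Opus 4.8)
The plan is to show that the image $S:=\mathrm{rlx}\circ \mathrm{ch}(\mathbf{SC}_\NPL)$ is a graded subalgebra of $\mathrm{Sym}$ and then to exhibit, inside $S$, a generating set for $\mathrm{Sym}$. For the first point, $\mathrm{ch}$ is an algebra isomorphism and $\mathrm{rlx}$ an algebra homomorphism, so $\mathrm{rlx}\circ\mathrm{ch}$ is an algebra homomorphism $\mathbf{SC}\to \mathrm{Sym}$. Since the concatenation $\cP.\cQ$ of two total orders is again a total order (of width $1$), the species $\pattern_\NPL^{(1)}$ is closed under the product $m_{A,B}$, so $\mathbf{SC}_\NPL$ is a subalgebra of $\mathbf{SC}$ and $S$ is a graded subalgebra of $\mathrm{Sym}$. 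It therefore suffices to locate algebra generators of $\mathrm{Sym}$ in $S$; working over $\CC$, I would aim for the power sums $p_n$, using that $\pattern_\NPL^{(1)}[A]$ is the full superclass-function space, so $\mathbf{SC}_\NPL$ contains the superclass identifier functions $\kappa^\mu$.

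To produce the $p_n$, I would fix the total order $\cR_n=\{1<2<\cdots<n\}$ and the non-nesting partition $\mu_n=\{[1,2],[2,3],\ldots,[n-1,n]\}$; the intervals $[i,i+1]$ are pairwise incomparable in $\Int^\circ(\cR_n)$, so $\mu_n$ is an antichain and hence lies in $\nnopart(\cR_n)$. The key claim is that the normal lattice superclass $\UT^{\mu_n}_\circ$ --- the unitriangular matrices whose first-superdiagonal entries $(i,i+1)$ are all nonzero, with the strictly higher entries unconstrained --- is a single algebra-group superclass type. Indeed, using the nonzero entries on the first superdiagonal one clears every strictly higher off-diagonal entry by two-sided $\UT_{\cR_n}$-operations, so that $\UT^{\mu_n}_\circ$ is the union, over all $\FF_q^\times$-labelings of the arcs, of the algebra-group superclass attached to the single-block set partition $\{1 2\cdots n\}$. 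Consequently the identifier $\kappa^{\mu_n}$ agrees with the label-summed identifier $\kappa_{\{12\cdots n\}}$, and so $\mathrm{ch}(\kappa^{\mu_n})=m_{\{12\cdots n\}}\in \mathrm{NCSym}$.

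Finally, allowing the variables to commute sends the single-block monomial $m_{\{12\cdots n\}}=\sum_i x_i^n$ to $m_{(n)}=p_n$, whence $p_n\in S$ for every $n\geq 1$. Since $\{p_n\mid n\geq 1\}$ generates $\mathrm{Sym}$ as a $\CC$-algebra and $S$ is a subalgebra, this forces $S=\mathrm{Sym}$. (One could instead track the families $e_n$ or $h_n$, but the path partitions $\mu_n$ hit the $p_n$ on the nose, which is the cleanest route.)

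I expect the main obstacle to be the identification in the middle step: verifying that the \emph{coarse} normal lattice superclass of the path partition $\mu_n$ collapses to a single algebra-group set-partition type, which is exactly what pins down $\mathrm{ch}(\kappa^{\mu_n})$ as a single monomial rather than a sum of monomials. This is a two-sided row/column reduction argument and requires care about the $\FF_q^\times$-labeling normalization built into $\mathrm{ch}$. Once that identification is in place, surjectivity onto $\mathrm{Sym}$ is formal; the slight coarsening introduced by passing to the normal lattice theory is precisely washed out by $\mathrm{rlx}$, since the generating single-block partitions remain non-nesting and therefore survive in $\pattern_\NPL^{(1)}$.
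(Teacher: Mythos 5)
Your argument is correct, and in fact the paper states this proposition without any proof, so your write-up supplies a justification the authors omitted rather than duplicating one. The two halves are both sound: closure of $\pattern_\NPL^{(1)}$ under $m_{A,B}$ (concatenation of total orders is a total order) makes the image a subalgebra of $\mathrm{Sym}$, and the path partitions $\mu_n=\{[1,2],\ldots,[n-1,n]\}$ do the rest. The step you flag as the main obstacle checks out: for the total order $1<2<\cdots<n$ one has $\UT^{\mu_n}=\UT_{\cR}$, its covers in $\NPL(\cR)$ downward are exactly the subgroups obtained by zeroing one superdiagonal entry, so $\UT^{\mu_n}_\circ=\{u\mid u_{i,i+1}\neq 0 \text{ for all } i\}$; and the two-sided orbit of $1+\sum_i t_ie_{i,i+1}$ under (\ref{AlgebraGroup}) is precisely the set of unipotent matrices with prescribed superdiagonal $(t_1,\ldots,t_{n-1})$ and arbitrary higher entries (the ``regular'' superclass), so $\delta_{\mu_n}$ is the sum over $t\in(\FF_q^\times)^{n-1}$ of the algebra-group identifiers of the labeled single-block partition. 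Hence $\mathrm{rlx}\circ\mathrm{ch}\circ\pi(\delta_{\mu_n})$ is a nonzero multiple of $m_{(n)}=p_n$, and since the $p_n$ generate $\mathrm{Sym}$ over $\CC$, surjectivity follows; the reverse containment is trivial. The only cosmetic caveat is that the scalar relating $\pi(\delta_{\mu_n})$ to $\kappa_{\{12\cdots n\}}$ depends on the precise normalization of $\pi$ in \cite{ABT13}, but since any nonzero multiple of $p_n$ suffices, this does not affect the conclusion.
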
 

In other words, we have
$$\begin{tikzpicture}[baseline=.5]
\node (A) at (0,1) {$\mathbf{scf}(U)$};
\node[rotate=90] at (0,.5) {$\subseteq$};
\node (B) at (0,0) {$\pattern_\NPL^{(1)}$};
\node (C) at (3,1) {$\mathbf{SC}(U)$};
\node[rotate=90] at (3,.5) {$\subseteq$};
\node (D) at (3,0) {$\mathbf{SC}_\NPL$};
\node (E) at (6,1) {$\mathrm{NCSym}$};
\node[rotate=90] at (6,.5) {$\subseteq$};
\node (F) at (6,0) {$\mathrm{NCSym}_\nn$};
\node (G) at (9,1) {$\mathrm{Sym}$};
\node[rotate=90] at (9,.5) {$=$};
\node (H) at (9,0) {$\mathrm{Sym}$};
\draw[->] (A) -- node[above] {$\pi$}  (C);
\draw[->] (C) -- node[above] {$\mathrm{ch}$}  (E);
\draw[->] (E) -- node[above] {$\mathrm{rlx}$}  (G);
\draw[->] (B) -- node [above] {$\pi$}  (D);
\draw[->] (D) -- node[above] {$\mathrm{ch}$}  (F);
\draw[->] (F) -- node[above] {$\mathrm{rlx}$}  (H);
\end{tikzpicture}\ ,$$
where $\mathrm{NCSym}_\nn=\mathrm{ch}\circ \pi(\pattern_\NPL^{(1)}).$
In \cite{NT}, the authors define the Catalan quasi-symmetric Hopf algebra $\mathsf{CQSym}$ in which each degree has dimension $C_n,$ the $n$th Catalan number. In the last section we show that $\mathrm{NCSym}_\nn$ is isomorphic to $\mathsf{CQSym}$.

\subsection{Standard bases}

As a supercharacter theory, $\tS_\cR$ equips $\f(\UT_\cR;\Cl(\cR))$ with two natural bases
\begin{align*}
\f(\UT_\cR;\Cl(\cR))&=\CC\spanning\{\delta_\mu\mid \mu\in \nnopart(\cR)\}\\
&=\CC\spanning\{\chi^\lambda \mid \lambda\in \nnopart(\cR)\},
\end{align*}
where
$$\delta_\mu(g)=\left\{\begin{array}{ll} 1 & \text{if $g\in \UT^\mu_\circ$},\\ 0 & \text{otherwise,}\end{array}\right.$$
is the superclass indicator function, and $\chi^\lambda$ is the supercharacter as in (\ref{PatternSupercharacterDefinition}).  These two bases are orthogonal with respect to the usual inner product
$$\langle \chi,\psi\rangle=\frac{1}{|\UT_\cR|} \sum_{u\in \UT_{\cR}} \chi(u)\overline{\psi(u)},$$
with
\begin{equation*}
\langle \delta_\mu,\delta_\nu\rangle =\delta_{\mu\nu} |\UT^\mu_\circ|\quad\text{and}\quad
\langle \chi^\lambda,\chi^\nu\rangle =\delta_{\lambda\nu} \chi^\lambda(1).
\end{equation*}

As a normal lattice supercharacter theory, we obtain a third canonical basis
$$\f(\UT_\cR;\Cl(\cR))=\CC\spanning\{\chi^{\UT_\lambda}\mid \lambda\in \nnopart(\cR)\},$$
where 
$$\chi^{\UT_\lambda}=\sum_{\UT_\nu\supseteq \UT_\lambda} \chi^{\nu}.$$
We might also be tempted by the dual construction
$$\delta_{\UT^\mu}=\sum_{\UT^\nu\subseteq \UT^\mu}\delta_\nu,$$
giving the normal subgroup indicator functions
\begin{equation*}
\delta_{\UT^\mu}(g)=\left\{\begin{array}{ll} 1 & \text{if $g\in \UT^\mu$},\\ 0 & \text{otherwise.}\end{array}\right.
\end{equation*}  
However, as the character of the permutation module $\Ind_{\UT_\lambda}^{\UT_\cR}(\One)$,
\begin{align*}
\chi^{\UT_\lambda}(g)&=\left\{\begin{array}{ll} \frac{|\UT_\cR|}{|\UT_\lambda|} & \text{if $g\in \UT_\lambda$},\\ 0 & \text{otherwise,}\end{array}\right.\\
&=\frac{|\UT_\cR|}{|\UT_\lambda|} \delta_{\UT_\lambda},
\end{align*}
so up to scaling these are in fact the same basis.  Neither version is orthogonal, and we have
$$\langle \chi^{\UT_\lambda},\chi^{\UT_\nu}\rangle=\frac{|\UT_\cR|}{|\UT_\lambda\UT_\nu|}\quad\text{and}\quad\langle \delta_{\UT^\mu},\delta_{\UT^\nu}\rangle=\frac{|\UT^\mu\cap \UT^\nu|}{|\UT_\cR|}.
$$
However, they give a upper/lower triangular decomposition of the supercharacter table of $\UT_\cR$ (similar to the situation in \cite{BT15} for the full upper-triangulars).  By keeping track of the corresponding diagonal entries, we obtain a formula for the determinant of the supercharacter table.

\begin{proposition}
Let $\SC(\cR)$ be the supercharacter table of $\UT_\cR$.  Then 
$$\det(\SC(\cR))=\prod_{\lambda\in \nnopart(\cR)} \frac{|\UT_\cR|}{|\UT_\lambda|}.$$
\end{proposition}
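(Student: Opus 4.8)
The plan is to exploit the triangularity already hinted at in the text: the basis $\{\chi^{\UT_\lambda}\}$ expands into the supercharacter basis $\{\chi^\nu\}$ via the defining relation $\chi^{\UT_\lambda}=\sum_{\UT_\nu\supseteq \UT_\lambda}\chi^\nu$, which is unitriangular with respect to the containment partial order on $\{\UT_\lambda\mid \lambda\in\nnopart(\cR)\}$. First I would observe that the supercharacter table $\SC(\cR)$ is, up to ordering of rows and columns, the matrix of values $\chi^\lambda(\UT^\mu_\circ)$ recorded by Proposition \ref{PatternCharacterFormula}, with $\lambda$ indexing rows (supercharacters) and $\mu$ indexing columns (superclass representatives). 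The determinant is unchanged under the change of basis expressing the $\chi^{\UT_\lambda}$ in terms of the $\chi^\lambda$, since that change of basis is unitriangular and hence has determinant $1$.

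Next I would set up the factorization of $\SC(\cR)$ as a product of three matrices. Writing $M$ for the supercharacter table in the $\chi^\lambda$/$\delta_\mu$ bases, the change-of-basis matrices sending $\chi^{\UT_\lambda}\mapsto \chi^\lambda$ and $\delta_{\UT^\nu}\mapsto \delta_\mu$ are both unitriangular (one upper, one lower, with respect to the lattice order on $\NPL(\cR)$, which is a linear extension), so
\begin{equation*}
\det(\SC(\cR))=\det\big(\big(\chi^{\UT_\lambda}(\UT^\mu)\big)_{\lambda,\mu}\big),
\end{equation*}
where I now evaluate $\chi^{\UT_\lambda}$ on a representative of the superclass $\UT^\mu_\circ$. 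The key computational input is the formula for $\chi^{\UT_\lambda}$ as the induced character of the trivial representation from $\UT_\lambda$, namely $\chi^{\UT_\lambda}(g)=\tfrac{|\UT_\cR|}{|\UT_\lambda|}$ when $g\in \UT_\lambda$ and $0$ otherwise. The matrix $\big(\chi^{\UT_\lambda}(\UT^\mu_\circ)\big)$ is then triangular with respect to the lattice order: its $(\lambda,\mu)$ entry vanishes unless $\UT^\mu_\circ\subseteq \UT_\lambda$, and this containment condition is itself triangular once we match up the two indexings of $\nnopart(\cR)$ through $\pp_\Cl$ and $\pp_\Ch$ from Proposition \ref{NonnestingToPattern}.

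The diagonal entries are then exactly $\tfrac{|\UT_\cR|}{|\UT_\lambda|}$, one for each $\lambda\in\nnopart(\cR)$, and multiplying them gives the claimed product. I expect the main obstacle to be verifying carefully that the two distinct indexings of superclasses and supercharacters (via $\pp_\Cl$ for classes and $\pp_\Ch$ for characters, which the paper emphasizes do \emph{not} invert one another) can be aligned by a common linear extension of the lattice $\NPL(\cR)$, so that the relevant matrix is genuinely triangular with the correct diagonal; this is where the asymmetry introduced around (\ref{PatternSupercharacterDefinition}) must be reconciled. Once that bookkeeping is settled, I would check that for the matching pair $\lambda=\mu$ the superclass representative lies in $\UT_\lambda$ (so the diagonal entry is nonzero and equals $|\UT_\cR|/|\UT_\lambda|$), while for $\mu$ strictly above $\lambda$ in the order the containment fails, forcing the entry below the diagonal (or above, depending on orientation) to vanish. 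Since the unitriangular change-of-basis matrices contribute determinant $1$, the determinant of $\SC(\cR)$ equals the product of these diagonal entries, completing the proof.
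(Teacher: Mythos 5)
Your overall strategy is the one the paper intends: pass from the supercharacter basis to the permutation--character basis via the unitriangular (hence determinant~$1$) expansion $\chi^{\UT_\lambda}=\sum_{\UT_\nu\supseteq\UT_\lambda}\chi^\nu$, use that $\chi^{\UT_\lambda}$ equals $\tfrac{|\UT_\cR|}{|\UT_\lambda|}$ on $\UT_\lambda$ and $0$ elsewhere, and read the determinant off the diagonal of the resulting triangular matrix. This is exactly the ``upper/lower triangular decomposition'' the paper alludes to just before the proposition, so the route is the same.

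There is, however, a concrete misstep in the one step you yourself flag as delicate. You assert that for the matching pair $\lambda=\mu$ the superclass representative lies in $\UT_\lambda$, so that the $(\lambda,\lambda)$ entry equals $\tfrac{|\UT_\cR|}{|\UT_\lambda|}$. That is false for every $\lambda\neq\emptyset$: an element of the superclass $\UT^\lambda_\circ$ is required to have nonzero entries at the positions $[i,j]\in\lambda$, whereas $\Int^\circ(\cR_\lambda)$ is the complement of the ideal generated by $\lambda$ and so excludes $\lambda$ itself; hence $\UT^\lambda_\circ\cap\UT_\lambda=\emptyset$ and $\chi^{\UT_\lambda}(\UT^\lambda_\circ)=0$. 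The nonzero ``diagonal'' entries actually sit at the pairs $(\lambda,\mu)$ with $\UT^\mu=\UT_\lambda$, i.e.\ $\mu=\pp_\Cl(\UT_\lambda)$, since $\chi^{\UT_\lambda}(\UT^\mu_\circ)$ equals $\tfrac{|\UT_\cR|}{|\UT_\lambda|}$ when $\UT^\mu\subseteq\UT_\lambda$ and $0$ otherwise. So you should order the rows by a linear extension of $\NPL(\cR)$ through $\lambda\mapsto\UT_\lambda$ and the columns through $\mu\mapsto\UT^\mu$ using the \emph{same} linear extension; then the matrix is genuinely triangular with diagonal $\tfrac{|\UT_\cR|}{|\UT_\lambda|}$. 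Relative to a single fixed ordering of $\nnopart(\cR)$ for both rows and columns, this reindexing costs the sign of the permutation induced by $\pp_\Cl\circ\pp_\Ch$, which is harmless because the determinant of a supercharacter table is only defined up to row and column ordering in the first place. With that correction your argument goes through and agrees with the paper's.
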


For these bases we can compute the structure constants as follows.  First the superclass indicators give

\begin{lemma}[\cite{An15}]
Let $A,B$ be sets with $A\cap B=\emptyset$. 
\begin{enumerate}
\item[(a)] For $\cR\in \poset(A)$, $\cQ\in \poset(B)$, $\mu\in \nnopart(\cR)$ and $\nu\in\nnopart(\cQ)$,
$$\Inf_{A,B}(\delta_\mu\otimes\delta_\nu)=\sum_{\lambda\in \nnopart(\cP.\cQ)\atop \lambda|_A=\mu,\lambda|_B=\nu}\delta_\lambda.$$
\item[(b)] For $\cP\in \poset(A\cup B)$, $\lambda\in \nnopart(\cP)$,
$$\Res_{A,B}(\delta_\lambda)=\left\{\begin{array}{@{}l@{\ }l} \delta_{\lambda_A}\otimes \delta_{\lambda_B} & \text{if $\lambda_A\cup \lambda_B=\lambda$,}\\ 0 &\text{otherwise.}\end{array}\right.$$
\end{enumerate}
\end{lemma}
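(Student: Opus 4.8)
The plan is to prove both parts by evaluating the two sides of each identity on an arbitrary group element and reducing everything to the combinatorics of supports. For $g\in\UT_\cR$ write $\mathrm{supp}(g)=\{[i,j]\in\Int^\circ(\cR)\mid g([i,j])\neq 0\}$. The first and most delicate step is to record, from the description of the superclasses in Theorem \ref{NormalLatticeSupercharacterTheory} together with Proposition \ref{NonnestingToPattern}, that $\UT^\mu=\prod_{[i,j]\in\mu}\UT_{\cR^\vee_{[i,j]}}$ is exactly the subgroup of matrices supported on the co-ideal generated by $\mu$, and that passing to $\UT^\mu_\circ$ additionally forces the entries indexed by $\mu$ to be nonzero; hence
$$g\in\UT^\mu_\circ\qquad\Longleftrightarrow\qquad \mu\text{ is the set of minimal elements of }\mathrm{supp}(g).$$
Thus $\delta_\mu$ is the indicator of the event ``$\min\mathrm{supp}(g)=\mu$'', and since the superclasses partition $\UT_\cR$ every $g$ realises exactly one such $\mu$; in particular the right-hand sides of (a) and (b) are $\{0,1\}$-valued, so it suffices to match a single Boolean condition on $\mathrm{supp}(g)$.

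The one structural input I would isolate concerns the interval poset. In the concatenation $\cP.\cQ$ relevant to (a), every element of $A$ lies below every element of $B$, so an interval with both endpoints in $A$ can only contain subintervals with both endpoints in $A$, and likewise for $B$; consequently the $A$-intervals $\Int^\circ(\cP)$ and the $B$-intervals $\Int^\circ(\cQ)$ are each order ideals of $\Int^\circ(\cP.\cQ)$, they are mutually incomparable, and the ``crossing'' intervals $[i,j]$ with $i\in A$, $j\in B$ form the complementary co-ideal. Two consequences drive the proofs: for any $S\subseteq\Int^\circ(\cP.\cQ)$ one has $\min\big(S\cap\Int^\circ(\cP)\big)=(\min S)\cap\Int^\circ(\cP)$, because an element of $S$ lying below an $A$-interval is itself an $A$-interval; and for a disjoint union of an $A$-set with a $B$-set the minimal elements split, $\min(S_A\sqcup S_B)=\min S_A\sqcup\min S_B$, by mutual incomparability. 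Moreover minimality among $A$-intervals is computed identically in $\Int^\circ(\cP)$ and in $\Int^\circ(\cP.\cQ)$, since the comparisons there agree.

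For (a) I would evaluate $\Inf_{A,B}(\delta_\mu\otimes\delta_\nu)$ at $g\in\UT_{\cP.\cQ}$ as $\delta_\mu\otimes\delta_\nu$ applied to the two components of $\pi_{A,B}(g)$, whose supports are exactly $\mathrm{supp}(g)\cap\Int^\circ(\cP)$ and $\mathrm{supp}(g)\cap\Int^\circ(\cQ)$. Setting $\lambda=\min\mathrm{supp}(g)$, so that $g\in\UT^\lambda_\circ$ is the unique superclass of $g$, the first consequence above gives $\min\big(\mathrm{supp}(g)\cap\Int^\circ(\cP)\big)=\lambda|_A$ and symmetrically $\lambda|_B$ on the $B$-side; hence the product is $1$ precisely when $\lambda|_A=\mu$ and $\lambda|_B=\nu$, which is exactly the condition that $\delta_\lambda$ occurs in the sum on the right. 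Since both sides are the indicator of the same event, (a) follows.

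For (b) I would evaluate $\Res_{A,B}(\delta_\lambda)$ at $(u,v)$ as $\delta_\lambda(\iota_{A,B}(u,v))$, observing that $\iota_{A,B}(u,v)$ has no crossing entries, so $\mathrm{supp}(\iota_{A,B}(u,v))=\mathrm{supp}(u)\sqcup\mathrm{supp}(v)$ consists only of $A$- and $B$-intervals, and therefore so does $\min\mathrm{supp}(\iota_{A,B}(u,v))$. If $\lambda$ contains a crossing interval, equivalently $\lambda_A\cup\lambda_B\neq\lambda$, this minimal set can never equal $\lambda$, so the evaluation is identically $0$. Otherwise $\lambda=\lambda_A\sqcup\lambda_B$, and the splitting of minima over the incomparable union gives $\min\mathrm{supp}(\iota_{A,B}(u,v))=\lambda$ if and only if $\min\mathrm{supp}(u)=\lambda_A$ and $\min\mathrm{supp}(v)=\lambda_B$, i.e. if and only if $\delta_{\lambda_A}(u)\delta_{\lambda_B}(v)=1$; this yields $\Res_{A,B}(\delta_\lambda)=\delta_{\lambda_A}\otimes\delta_{\lambda_B}$. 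The main obstacle is the opening identification of $\UT^\mu_\circ$ with the ``minimal support'' condition, after which the whole argument is the order-theoretic bookkeeping of how $\min$ interacts with the ideal/co-ideal decomposition of the interval poset, the crossing intervals being exactly what forces the vanishing in (b).
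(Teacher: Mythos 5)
The paper offers no proof of this lemma---it is quoted from \cite{An15}---so your argument has to stand on its own. Your opening identification of $\UT^\mu_\circ$ with $\{g\mid \min\mathrm{supp}(g)=\mu\}$ is correct: the smallest normal pattern subgroup containing $g$ is the one whose interval co-ideal is generated by $\mathrm{supp}(g)$, and excluding the covered subgroups forces exactly the minimal positions to be nonzero. Part (a) then goes through as you say, because for the concatenation $\cR.\cQ$ the $A$-intervals and the $B$-intervals really are order ideals of $\Int^\circ(\cR.\cQ)$ and are mutually incomparable, so minima restrict the way you need.

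The gap is in part (b), where $\cP\in\poset(A\cup B)$ is arbitrary and the mutual incomparability you established only for concatenations fails: a $B$-interval can be nested inside an $A$-interval. Take $\cP$ the chain $1\prec 2\prec 3\prec 4$, $A=\{1,4\}$, $B=\{2,3\}$, $\lambda=\{[2,3]\}$, and $u=e_{14}(1)$, $v=e_{23}(1)$. Then $\mathrm{supp}(\iota_{A,B}(u,v))=\{[1,4],[2,3]\}$ and, since $[2,3]\prec[1,4]$ in $\Int^\circ(\cP)$, its set of minimal elements is $\{[2,3]\}=\lambda$, so $\delta_\lambda(\iota_{A,B}(u,v))=1$; yet $\lambda_A=\emptyset$ and $\delta_\emptyset(u)=0$ because $u\neq 1$. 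So the ``splitting of minima'' step is false, and worse, combined with your (correct) description of the superclasses it shows the displayed identity itself failing at this example: one gets $\Res_{A,B}(\delta_{\{[2,3]\}})=\bigl(\delta_\emptyset+\delta_{\{[1,4]\}}\bigr)\otimes\delta_{\{[2,3]\}}$, which is not a single tensor of superclass indicators. Your method does prove (b) in the special case $\cP=\cP|_A.\cP|_B$ (where it reduces to the part (a) analysis), but as stated for general $\cP$ the claim needs either an additional hypothesis excluding nesting between $A$- and $B$-intervals or a careful reconciliation with the conventions of \cite{An15}; in any case the proof as written does not establish part (b).
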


The normal subgroup basis has the following constants.

\begin{lemma}\label{StructureSubgroupPoset}
Let $A,B$ be sets with $A\cap B=\emptyset$. 
\begin{enumerate}
\item[(a)] For $\cO,\cR\in \poset(A)$, $\cP,\cQ\in \poset(B)$ with $\UT_\cO\triangleleft \UT_\cR$ and $\UT_\cP\triangleleft \UT_\cQ$,
$$\Inf_{A,B}(\delta_{\UT_\cO}\otimes\delta_{\UT_\cP})=\delta_{\UT_{\cO.\cP}}.$$
\item[(b)] For $\cO,\cR\in \poset(A\cup B)$ with $\UT_\cO\triangleleft\UT_\cR$,
$$\Res_{A,B}(\delta_{\UT_\cO})=\delta_{\UT_{\cO|_A}} \otimes \delta_{\UT_{\cO|_B}}.$$
\end{enumerate}
\end{lemma}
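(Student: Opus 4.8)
The plan is to prove both identities by a direct pointwise comparison of indicator functions, using the characterization from the Proposition on normal pattern subgroups: $\UT_\cO\triangleleft\UT_\cR$ is equivalent to $\Int^\circ(\cO)$ being a co-ideal of $\Int^\circ(\cR)$, and under this hypothesis $\UT_\cO$ is exactly the set of $u\in\UT_\cR$ with $u([i,j])=0$ for every $[i,j]\in\Int^\circ(\cR)\setminus\Int^\circ(\cO)$. The common engine is the decomposition of the proper intervals of a concatenation or a restriction into three disjoint classes according to whether both endpoints lie in $A$, both lie in $B$, or the interval straddles the two blocks. The maps $\pi_{A,B}$ and $\iota_{A,B}$ are built to treat the straddling intervals trivially, and this is precisely what makes the structure constants collapse to a single term.

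First I would record the well-definedness that both sides require. For (a) I check that $\Int^\circ(\cO.\cP)$ is a co-ideal of $\Int^\circ(\cR.\cQ)$: writing $\Int^\circ(\cR.\cQ)=\Int^\circ(\cR)\sqcup\Int^\circ(\cQ)\sqcup S$, where $S$ is the set of all pairs $[i,j]$ with $i\in A,\ j\in B$, one verifies that enlarging (in the inclusion order) any proper interval of $\cO.\cP$ within $\cR.\cQ$ stays inside $\cO.\cP$. An enlargement keeping both endpoints in $A$ (resp.\ $B$) uses that $\Int^\circ(\cO)$ (resp.\ $\Int^\circ(\cP)$) is a co-ideal, while any enlargement that pushes an endpoint from $A$ into $B$ lands in $S\subseteq\Int^\circ(\cO.\cP)$. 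An identical argument shows $\Int^\circ(\cO|_A)$ is a co-ideal of $\Int^\circ(\cR|_A)$, and similarly for $B$, so all three subgroups appearing in the lemma are genuinely normal and the symbols $\delta_{\UT_{\cO.\cP}}$, $\delta_{\UT_{\cO|_A}}$, $\delta_{\UT_{\cO|_B}}$ name elements of $\pattern_\NPL$.

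For part (a), I evaluate $\Inf_{A,B}(\delta_{\UT_\cO}\otimes\delta_{\UT_\cP})=(\delta_{\UT_\cO},\delta_{\UT_\cP})\circ\pi_{A,B}$ at an arbitrary $w\in\UT_{\cR.\cQ}$. Writing $\pi_{A,B}(w)=(u,v)\in\UT_\cR\times\UT_\cQ$, where $u$ collects the $A$-to-$A$ entries of $w$ and $v$ the $B$-to-$B$ entries, the value is $1$ exactly when $u\in\UT_\cO$ and $v\in\UT_\cP$. On the other hand $\delta_{\UT_{\cO.\cP}}(w)=1$ exactly when $w$ vanishes on $\Int^\circ(\cR.\cQ)\setminus\Int^\circ(\cO.\cP)$; since $S\subseteq\Int^\circ(\cO.\cP)$, this difference equals $(\Int^\circ(\cR)\setminus\Int^\circ(\cO))\sqcup(\Int^\circ(\cQ)\setminus\Int^\circ(\cP))$. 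Vanishing of $w$ on the first piece is exactly $u\in\UT_\cO$ and on the second exactly $v\in\UT_\cP$, so the two conditions coincide. For part (b), I evaluate $\Res_{A,B}(\delta_{\UT_\cO})=\delta_{\UT_\cO}\circ\iota_{A,B}$ at an arbitrary $(u,v)\in\UT_{\cR|_A}\times\UT_{\cR|_B}$. The element $w=\iota_{A,B}(u,v)$ has $A$-to-$A$ entries given by $u$, $B$-to-$B$ entries by $v$, and all straddling entries $0$; hence $w\in\UT_\cO$ is automatic on the straddling intervals and reduces to vanishing of $u$ on the $A$-to-$A$ intervals of $\Int^\circ(\cR)\setminus\Int^\circ(\cO)$ together with vanishing of $v$ on the $B$-to-$B$ ones. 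Because $\Int^\circ(\cR|_A)\setminus\Int^\circ(\cO|_A)$ is exactly the set of $A$-to-$A$ intervals in $\Int^\circ(\cR)\setminus\Int^\circ(\cO)$ (and similarly for $B$), these are precisely $u\in\UT_{\cO|_A}$ and $v\in\UT_{\cO|_B}$, giving $\delta_{\UT_\cO}(w)=\delta_{\UT_{\cO|_A}}(u)\,\delta_{\UT_{\cO|_B}}(v)$.

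I expect the only real obstacle to be bookkeeping: correctly matching each class of proper intervals across the two operations and confirming that the straddling intervals contribute nothing. The substantive point in both parts is that $\pi_{A,B}$ and $\iota_{A,B}$ force the mixed $A$-to-$B$ entries to play no role — unconstrained on both factors in (a), identically zero in (b) — so that membership in the relevant normal pattern subgroup factors as a product over the two blocks.
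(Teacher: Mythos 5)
Your argument is correct and is essentially the paper's proof: both reduce part (a) to the identity $\pi_{A,B}^{-1}(\UT_\cO\times\UT_\cP)=\UT_{\cO.\cP}$ and part (b) to $\iota_{A,B}^{-1}(\UT_\cO)=\UT_{\cO|_A}\times\UT_{\cO|_B}$, which the paper asserts without further comment. Your extra bookkeeping --- splitting $\Int^\circ$ into the two block-diagonal classes and the straddling class, and checking normality of the resulting subgroups --- just supplies the details behind those two preimage claims.
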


\begin{proof}
(a) For $u\in \UT_{\cR.\cQ}$,
$$\Inf_{A,B}(\delta_{\UT_\cO}\otimes\delta_{\UT_\cP})(u)=\Big(\delta_{\UT_\cO}\otimes\delta_{\UT_\cP}\Big)(\pi_{A,B}(u)).$$
Since $\pi_{A,B}^{-1}(\UT_\cO\otimes \UT_\cP)=\UT_{\cO.\cP}$, the result follows.

(b) For $(u,v)\in \UT_{\cR|_A}\times \UT_{\cR|_B}$,
$$\Res_{A,B}(\delta_{\UT_\cO})(u,v)=\delta_{\UT_\cO}(\iota_{A,B}(u,v)).$$
Since $\iota_{A,B}^{-1}(\UT_{\cO})=\UT_{\cO|_A}\times \UT_{\cO|_B}$, the result follows.
\end{proof}

If we want the analogous result in terms of the combinatorics, we need to be a bit more careful. If $\UT_\cO=\UT^\lambda$ and $\UT_\cP=\UT^\nu$, then $\UT_{\cO.\cP}\neq \UT^{\lambda\cup \nu}$ in general.  However,  if $\UT_\cO=\UT_\lambda$ and $\UT_\cP=\UT_\nu$, then $\UT_{\cO.\cP}=\UT_{\lambda\cup \nu}$.

\begin{lemma}\label{StructureSubgroupPartition}
Let $A,B$ be sets with $A\cap B=\emptyset$. 
\begin{enumerate}
\item[(a)] For $\cR\in \poset(A)$, $\cQ\in \poset(B)$, $\lambda\in \nnopart(\cR)$ and $\nu\in\nnopart(\cQ)$,
$$\Inf_{A,B}(\chi^{\UT_\lambda}\otimes\chi^{\UT_\nu})=\chi^{\UT_{\lambda\cup\nu}}.$$
\item[(b)] For $\cR\in \poset(A\cup B)$, $\mu\in \nnopart(\cR)$,
$$\Res_{A,B}(\delta_{\UT^\mu})=\delta_{\UT^\mu\cap\UT_{\cR|_A}} \otimes \delta_{\UT^\mu\cap\UT_{\cR|_B}}.$$
\end{enumerate}
\end{lemma}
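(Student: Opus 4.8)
Lemma \ref{StructureSubgroupPartition}:
(a) $\Inf_{A,B}(\chi^{\UT_\lambda}\otimes\chi^{\UT_\nu})=\chi^{\UT_{\lambda\cup\nu}}$
(b) $\Res_{A,B}(\delta_{\UT^\mu})=\delta_{\UT^\mu\cap\UT_{\cR|_A}} \otimes \delta_{\UT^\mu\cap\UT_{\cR|_B}}$

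Let me understand the setup.The plan is to reduce both parts to the preceding Lemma~\ref{StructureSubgroupPoset}, since the objects $\chi^{\UT_\lambda}$ and $\delta_{\UT^\mu}$ are (up to scaling) normal-subgroup indicator functions, and Lemma~\ref{StructureSubgroupPoset} already computes how those behave under $\Inf$ and $\Res$. The only genuinely combinatorial content is translating the poset-theoretic identities $\UT_{\cO.\cP}=\UT_{\lambda\cup\nu}$ (for part (a)) and $\UT^\mu\cap \UT_{\cR|_A}=\UT^{\mu|_A}$-type statements (for part (b)) into the $\nnopart$ indexing.

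\textbf{Part (a).} Recall from (\ref{PartitionToSubgroup}) that $\UT_\lambda=\bigcap_{[i,j]\in\lambda}\UT_{\cR_{[i,j]}}$, so $\lambda$ records the covers of $\UT_\lambda$ (equivalently the maximal proper intervals \emph{not} in the co-ideal $\Int^\circ$ of the subposet). First I would identify $\cO,\cP$ with the subposets satisfying $\UT_\cO=\UT_\lambda$ and $\UT_\cP=\UT_\nu$, then invoke Lemma~\ref{StructureSubgroupPoset}(a), which gives $\Inf_{A,B}(\delta_{\UT_\cO}\otimes\delta_{\UT_\cP})=\delta_{\UT_{\cO.\cP}}$. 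Since $\chi^{\UT_\lambda}=\frac{|\UT_\cR|}{|\UT_\lambda|}\delta_{\UT_\lambda}$ (established just before the Proposition on $\det(\SC(\cR))$), the statement becomes an identity about $\delta$'s with a scaling factor; the factor matches because $\Inf$ preserves the relevant index $\frac{|\UT_{\cR.\cQ}|}{|\UT_{\cO.\cP}|}=\frac{|\UT_\cR|}{|\UT_\cO|}\cdot\frac{|\UT_\cQ|}{|\UT_\cP|}$ by the product structure on sizes. The key combinatorial step is then verifying $\UT_{\cO.\cP}=\UT_{\lambda\cup\nu}$, i.e.\ that the maximal proper intervals outside the co-ideal for the concatenation $\cO.\cP$ are exactly $\lambda\cup\nu$. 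This follows because in $\cP.\cQ$ every cross-interval $[i,j]$ with $i\in A, j\in B$ lies above everything, so adding all of $\UT_\cR\times\UT_\cQ$-structure leaves the covers of $\UT_{\cO.\cP}$ supported within the $A$-block and the $B$-block separately, and those are precisely $\lambda$ and $\nu$. The remark preceding the lemma (warning that $\UT_{\cO.\cP}\neq\UT^{\lambda\cup\nu}$ but $=\UT_{\lambda\cup\nu}$) flags exactly this point, so I expect this is where the $\pp_\Ch$-indexing (rather than $\pp_\Cl$) is essential.

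\textbf{Part (b).} This is the most direct part: $\delta_{\UT^\mu}$ is literally the indicator of a normal subgroup, so I would apply Lemma~\ref{StructureSubgroupPoset}(b) verbatim with $\cO$ the subposet having $\UT_\cO=\UT^\mu$, yielding $\Res_{A,B}(\delta_{\UT^\mu})=\delta_{\UT_{\cO|_A}}\otimes\delta_{\UT_{\cO|_B}}$. It then remains only to observe $\UT_{\cO|_A}=\UT^\mu\cap\UT_{\cR|_A}$, which is immediate from $\UT_\cO\cap\UT_{\cR|_A}=\UT_{\cO\cap(\cR|_A)}=\UT_{\cO|_A}$ using the intersection formula $\UT_{\cO}\cap\UT_{\cP}=\UT_{\cO\cap\cP}$ from the discussion following (\ref{NormalPatternSubgroups}), since restricting to $A$ is the same as intersecting with $\cR|_A$ at the level of intervals. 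No scaling is needed here because $\delta_{\UT^\mu}$ is used directly rather than its supercharacter rescaling.

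\textbf{Main obstacle.} The only nontrivial step is the poset identity $\UT_{\cO.\cP}=\UT_{\lambda\cup\nu}$ in part (a), together with correctly tracking the scaling factor $\frac{|\UT_\cR|}{|\UT_\lambda|}$ through $\Inf$. I would prove the identity by working at the level of co-ideals in $\Int^\circ$: show that $\Int^\circ(\cO.\cP)$ decomposes as the disjoint union of $\Int^\circ(\cO)$, $\Int^\circ(\cP)$, and the full set of cross-intervals, so that the minimal-cover data defining $\lambda\cup\nu$ is unchanged by concatenation. Everything else is a formal consequence of Lemma~\ref{StructureSubgroupPoset} and the identification $\chi^{\UT_\lambda}=\frac{|\UT_\cR|}{|\UT_\lambda|}\delta_{\UT_\lambda}$.
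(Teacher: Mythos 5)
Your proposal is correct and follows essentially the same route as the paper: part (a) reduces to Lemma \ref{StructureSubgroupPoset}(a) via the identification $\chi^{\UT_\lambda}=\frac{|\UT_\cR|}{|\UT_\lambda|}\delta_{\UT_\lambda}$ together with the size bookkeeping $\frac{|\UT_{\cR.\cQ}|}{|\UT_{\cO.\cP}|}=\frac{|\UT_\cR|}{|\UT_\cO|}\cdot\frac{|\UT_\cQ|}{|\UT_\cP|}$ (the paper phrases this as a pointwise value computation on $u\in\UT_{\lambda\cup\nu}$), and part (b) is exactly the paper's one-line deduction from Lemma \ref{StructureSubgroupPoset}(b). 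Your extra verification that $\UT_{\cO.\cP}=\UT_{\lambda\cup\nu}$ via cross-intervals never being contained in single-block intervals is a detail the paper only asserts in the remark preceding the lemma, and it is correct.
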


\begin{proof}
(a) For $u\in \UT_{\lambda\cup \nu}$,
$$\Inf_{A,B}(\chi^{\UT_\lambda}\otimes\chi^{\UT_\nu})(u)=\frac{|\UT_\cR|}{|\UT_\lambda|}\frac{|\UT_\cQ|}{|\UT_\nu|}\frac{q^{|A||B|}}{q^{|A||B|}}=\frac{\UT_{\cR.\cQ}}{|\UT_{\lambda\cup\nu}|}=\chi^{\UT_{\lambda\cup\nu}}(u).$$
(b) Here there is a less direct combinatorial description, but the result follows from Lemma \ref{StructureSubgroupPoset} (b).
\end{proof}

Somewhat surprisingly, we can also compute the structure constants for the supercharacter basis.  

\begin{theorem}\label{StructureConstantsSupercharacters}
Let $A,B$ be sets with $A\cap B=\emptyset$. 
\begin{enumerate}
\item[(a)] For $\cR\in \poset(A)$, $\cQ\in \poset(B)$, $\lambda\in \nnopart(\cR)$ and $\nu\in\nnopart(\cQ)$,
$$\Inf_{A,B}(\chi^{\lambda}\otimes\chi^{\nu})=\chi^{\lambda\cup\nu}.$$
\item[(b)] For $\cR\in \poset(A\cup B)$, $\lambda\in \nnopart(\cR)$,

$$\Res_{A,B}(\chi^{\lambda})=\frac{|\UT_\lambda\cap(\UT_{\cR|_A}\times\UT_{\cR|_B})|}{|\UT_{\lambda_A}\times \UT_{\lambda_B}|}\frac{\chi^{\lambda}(1)}{\chi^{\lambda_{A}\cup\lambda_{B}}(1)}\sum_{ \nu\in \anti(\Int^\lambda_{\lambda_A})\atop \eta\in \anti(\Int^\lambda_{\lambda_B})} \chi^{\lambda_A\cup\nu}\otimes \chi^{\lambda_B\cup\eta}.$$
\end{enumerate}
\end{theorem}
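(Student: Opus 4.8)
The plan is to handle the two parts separately: the product (a) by comparing character values directly through Proposition \ref{PatternCharacterFormula}, and the coproduct (b) by recognizing $\Res_{A,B}$ as an honest group restriction to a pattern subgroup and then invoking Corollary \ref{PatternRestriction} and Lemma \ref{RestrictionCombinatorics}.

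For part (a), recall that $\Inf_{A,B}(\chi^\lambda\otimes\chi^\nu)=(\chi^\lambda,\chi^\nu)\circ\pi_{A,B}$, so on $u\in\UT_{\cR.\cQ}$ its value is $\chi^\lambda(\pi_A(u))\,\chi^\nu(\pi_B(u))$, where $\pi_A,\pi_B$ are the two components of $\pi_{A,B}$. I would evaluate both sides with the character formula of Proposition \ref{PatternCharacterFormula} and check that each of its three ingredients is multiplicative. Since $\lambda\cup\nu$ has no interval crossing between $A$ and $B$, the kernel of $\pi_{A,B}$ sits inside $\UT_{\lambda\cup\nu}$, and the degree prefactor factors exactly as in the proof of Lemma \ref{StructureSubgroupPartition}(a), where $|\UT_{\cR.\cQ}|/|\UT_{\lambda\cup\nu}| = (|\UT_\cR|/|\UT_\lambda|)(|\UT_\cQ|/|\UT_\nu|)$ and $|\lambda\cup\nu|=|\lambda|+|\nu|$. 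The support condition $u\in\overline{\cC(\UT_{\lambda\cup\nu})}$ and the superclass exponent $|(\lambda\cup\nu)\cap\tau|$ (for $u\in\UT^\tau_\circ$) both split along $A$ and $B$, since an interval of $\lambda\cup\nu$ and any nesting partner forcing a vanishing lie on the same side. Multiplying the two one‑sided formulas yields $\chi^{\lambda\cup\nu}(u)$. Conceptually this just says that inflation along $\pi_{A,B}$ carries the normal‑lattice block indexed by $\UT_\lambda\times\UT_\nu$ to the block indexed by its preimage $\UT_{\lambda\cup\nu}$.

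The substance is in part (b). The key observation is that $\Res_{A,B}$ is restriction from $\UT_\cR$ to the image of $\iota_{A,B}$, namely the pattern subgroup $\UT_{\cR'}$ for the subposet $\cR'=\cR|_A\sqcup\cR|_B$ obtained from $\cR$ by deleting every relation between $A$ and $B$; moreover $\UT_{\cR'}=\UT_{\cR|_A}\times\UT_{\cR|_B}$ as a direct product. I would therefore apply Corollary \ref{PatternRestriction} with $\cQ=\cR'$. Here $\lambda_{\cR'}=\lambda_A\cup\lambda_B$ (cross intervals of $\lambda$ are dropped, as they are not intervals of $\cR'$), and because $\UT_{\cR'}$ is a direct product we have $\nnopart(\cR')=\nnopart(\cR|_A)\times\nnopart(\cR|_B)$ with supercharacters factoring as $\chi^{\rho}=\chi^{\rho\cap\Int^\circ(\cR|_A)}\otimes\chi^{\rho\cap\Int^\circ(\cR|_B)}$. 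Matching the prefactor of Corollary \ref{PatternRestriction} to the stated one reduces to the identifications $\UT_{\lambda_{\cR'}}=\UT_{\lambda_A}\times\UT_{\lambda_B}$ and $\UT_\lambda\cap\UT_{\cR'}=\UT_\lambda\cap(\UT_{\cR|_A}\times\UT_{\cR|_B})$, together with $\lambda_{\cR'}=\lambda_A\cup\lambda_B$, all immediate from the product description.

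It remains to convert the single sum of Corollary \ref{PatternRestriction} into the stated double sum, and here Lemma \ref{RestrictionCombinatorics} does the work, identifying the index set with $\{\rho\cup(\lambda_A\cup\lambda_B)\mid\rho\in\anti(\Int^\lambda_{\lambda_A\cup\lambda_B})\}$. I would then show that $\Int^\lambda_{\lambda_A\cup\lambda_B}$ is, as a poset, the disjoint union of $\Int^\lambda_{\lambda_A}$ and $\Int^\lambda_{\lambda_B}$ with no comparabilities between the two sides: every element is a same‑side interval of $\cR'$, an $A$‑side interval is incomparable in $\Int^\circ(\cR')$ to any $B$‑side interval, and restricting the defining conditions to one side recovers exactly $\Int^\lambda_{\lambda_A}$ (respectively $\Int^\lambda_{\lambda_B}$). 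Consequently an antichain $\rho$ splits uniquely as $\nu\sqcup\eta$ with $\nu\in\anti(\Int^\lambda_{\lambda_A})$ and $\eta\in\anti(\Int^\lambda_{\lambda_B})$, and $\rho\cup(\lambda_A\cup\lambda_B)$ corresponds to the pair $(\lambda_A\cup\nu,\lambda_B\cup\eta)$, whose supercharacter is $\chi^{\lambda_A\cup\nu}\otimes\chi^{\lambda_B\cup\eta}$; this reproduces the displayed formula. I expect the main obstacle to be the careful verification that $\Int^\lambda_{\lambda_A\cup\lambda_B}$ splits cleanly into its two sides and that the direct‑product factorization of supercharacters of $\UT_{\cR'}$ is compatible with the $\nnopart$ indexing, since $\lambda$ itself may carry cross intervals that must be shown not to interfere; once these are in place, the prefactor matching is routine arithmetic.
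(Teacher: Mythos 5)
Your proposal is correct and follows essentially the same route as the paper: part (b) is exactly the paper's argument (apply Corollary \ref{PatternRestriction} to the pattern subgroup $\UT_{\cR|_A}\times\UT_{\cR|_B}$ and reindex the sum via Lemma \ref{RestrictionCombinatorics}, using that $\Int^\lambda_{\lambda_A\cup\lambda_B}$ splits as a disjoint union over the two sides), with your write-up supplying details the paper leaves implicit. For part (a) the paper simply defers to the analogous argument in Andrews' work, whereas you verify multiplicativity of each ingredient of Proposition \ref{PatternCharacterFormula} directly; this is a routine substitute, not a different method.
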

\begin{proof}
(a) follows from a similar argument in \cite{An15} and (b) follows directly from Corollary \ref{PatternRestriction} with Lemma \ref{RestrictionCombinatorics}.
\end{proof}

\subsection{Freeness}

The first goal of this section is to show that $\pattern_\NPL$ is a free  monoid.  Recall that a \textbf{\emph{set composition}} $(B_1,\ldots, B_\ell)\vDash B$ of a set $B$ is a sequence of nonempty sets $(B_1,\ldots, B_\ell)$ such that $\{B_1,\ldots,B_\ell\}$ is a set partition of $B$.

For a set $B$, $\cP,\cQ\in \poset(B)$ with $\UT_\cQ\triangleleft \UT_\cP$, we say the pair $(\cP,\cQ)$  \textbf{\emph{factors}} if  there exists a nonempty, proper subset $A\subseteq B$ such that 
$$\cP|_{A}.\cP|_{B-A}=\cP\quad \text{and}\quad \cQ|_A.\cQ|_{B-A}=\cQ.
$$
If no such subset exists, we say the pair $(\cP,\cQ)$  is \textbf{\emph{atomic}}.   

In general,  for a pair $(\cP,\cQ)$ there exists a set composition $(B_1,\ldots, B_\ell)\vDash B$ such that for each $1\leq j\leq \ell$, $(\cP|_{B_j},\cQ|_{B_j})$ is atomic and 
\begin{equation}\label{PoPosetDecomposition}
\cP=\cP|_{B_1}.\ldots .\cP|_{B_\ell}\quad \text{and}\qquad \cQ=\cQ|_{B_1}.\ldots .\cQ|_{B_\ell}.
\end{equation}

Define a vector species $\poposet:\{\text{sets}\}\rightarrow \{\CC\text{-modules}\}$ by
$$\poposet[A]=\left\{\begin{array}{@{}ll} \dd\bigoplus_{\cR\in \poset(A)} \CC\spanning\{(\cR,\cQ)\text{ atomic}\mid \UT_\cQ\triangleleft \UT_\cR\} & \text{if $A\neq \emptyset$,}\\ 0 & \text{otherwise,}\end{array}\right.$$
where we consider the pairs $(\cR,\cQ)$ formally linearly independent.   We upgrade this vector species into a free connected Hopf monoid $\fpoposet: \{\text{sets}\}\rightarrow \{\CC\text{-modules}\}$ by
$$\fpoposet[A]=\left\{\begin{array}{@{}ll} \dd\bigoplus_{(A_1,\ldots, A_\ell)\vDash A\atop \ell\geq 1} \poposet[A_1]\otimes \cdots \otimes \poposet[A_\ell] & \text{if $A\neq \emptyset$,}\\ \CC & \text{otherwise,}\end{array}\right.$$
where each $(A_1,\ldots,A_\ell)\vDash A$ is a set composition of $A$. The product in $\fpoposet$ is concatenation of tensors, and for nonintersecting sets $A$ and $B$ and $(\cP,\cQ)\in\poposet[A\cup B]$,
$$   \Delta_{A,B}((\cP,\cQ))=\left\{\begin{array}{ll} 0 & \text{if $A,B\neq \emptyset$,}\\ 
(\cP,\cQ)\otimes 1 & \text{if $B=\emptyset$,}\\
1\otimes (\cP,\cQ) & \text{if $A=\emptyset$}.
\end{array}\right.$$
That is, $(\cP,\cQ)$ are primitive.

Extend the function
$$\begin{array}{ccc} \poposet[A] & \longrightarrow & \f\Big(\UT_\cP;\Cl(\cP)\Big)\\
(\cP,\cQ) & \mapsto & \delta_{\UT_\cQ}\end{array}$$
multiplicatively to a Hopf isomorphism 
$$\begin{array}{cccl} \fpoposet  & \longrightarrow &  \pattern_{ \NPL } & \\
(\cP_1,\cQ_1) \otimes \cdots \otimes (\cP_\ell, \cQ_\ell) & \mapsto  & \delta_{\UT_{\cQ_1.\ldots .\cQ_\ell}} & \Big(\in \f(\UT_{\cP_1.\ldots.\cP_\ell};\Cl(\cP_1.\ldots.\cP_\ell))\Big)\\
(\cP|_{B_1},\cQ|_{B_1})\otimes \cdots \otimes (\cP|_{B_\ell},\cQ|_{B_\ell})  & \mapsfrom & \delta_{\UT_\cQ} & \Big(\in \f(\UT_\cP;\Cl(\cP))\Big)
\end{array}$$
where $(B_1,\ldots, B_\ell)$ is the set composition coming from the unique decomposition (\ref{PoPosetDecomposition}).
We have proven the following proposition.

\begin{proposition}
The Hopf monoid $\pattern_\NPL$ is a free monoid.
\end{proposition}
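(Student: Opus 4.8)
The plan is to make precise the construction already assembled above: I will show that the assignment $\fpoposet \to \pattern_\NPL$ is a monoid isomorphism, and then transport freeness across it, since $\fpoposet$ is a free monoid by construction.

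First I would establish the unique atomic factorization \eqref{PoPosetDecomposition}. For a pair $(\cP,\cQ)$ with $\UT_\cQ \triangleleft \UT_\cP$ and $\cP \in \poset(B)$, unwinding the definition of concatenation shows that $(\cP,\cQ)$ factors through $A \subseteq B$ precisely when, in both $\cP$ and $\cQ$, every element of $A$ lies below every element of $B-A$ with no cross-incomparabilities; that is, $A$ realizes an ordinal-sum (linear-sum) splitting of $\cP$ and simultaneously of $\cQ$. Such simultaneous cut sets are nested and form a chain, so there is a unique finest set composition $(B_1,\ldots,B_\ell) \vDash B$ for which each restricted pair $(\cP|_{B_j},\cQ|_{B_j})$ is atomic, which is exactly \eqref{PoPosetDecomposition}. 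This uniqueness is the crux of the whole argument, as it is what forces the map to be a bijection.

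Next I would verify that the map is a well-defined linear isomorphism of species. On the basis of atomic tensors I send $(\cP_1,\cQ_1)\otimes\cdots\otimes(\cP_\ell,\cQ_\ell)$ to $\delta_{\UT_{\cQ_1.\cdots.\cQ_\ell}} \in \f(\UT_{\cP_1.\cdots.\cP_\ell};\Cl(\cP_1.\cdots.\cP_\ell))$; this is well-defined because concatenation preserves normality, so $\UT_{\cQ_1.\cdots.\cQ_\ell} \triangleleft \UT_{\cP_1.\cdots.\cP_\ell}$. The normal subgroup indicators $\{\delta_{\UT_\cQ} \mid \UT_\cQ \triangleleft \UT_\cP,\ \cP \in \poset(A)\}$ form a basis of $\pattern_\NPL[A]$, and by Step 1 the assignment sending $(\cP,\cQ)$ to its atomic pieces along the unique $(B_1,\ldots,B_\ell)$ furnishes a two-sided inverse on basis elements. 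Hence the map is a bijection of vector species.

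Then I would check that the map respects products. The product in $\fpoposet$ is concatenation of tensors, and under the map this corresponds to $\Inf_{A,B}$; Lemma \ref{StructureSubgroupPoset}(a) supplies $\Inf_{A,B}(\delta_{\UT_\cO}\otimes\delta_{\UT_\cP}) = \delta_{\UT_{\cO.\cP}}$, which matches the concatenation of the underlying pairs, so the map intertwines the two products and is a monoid morphism. Compatibility with the coproduct, upgrading this to a full Hopf isomorphism, follows similarly from Lemma \ref{StructureSubgroupPoset}(b) together with the primitivity of the atomic generators, but only the monoid structure is needed here. Since $\fpoposet$ is by construction the free monoid on the species $\poposet$, transporting along the isomorphism shows $\pattern_\NPL$ is free. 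The only genuinely delicate point is the uniqueness in Step 1; everything else is the bookkeeping licensed by Lemma \ref{StructureSubgroupPoset}.
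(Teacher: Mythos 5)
Your proof is correct and follows essentially the same route as the paper: the paper also establishes freeness by exhibiting the multiplicative isomorphism $\fpoposet\to\pattern_\NPL$, $(\cP,\cQ)\mapsto\delta_{\UT_\cQ}$, built on the unique atomic factorization (\ref{PoPosetDecomposition}). Your chain argument for the uniqueness of the simultaneous ordinal-sum cuts is a welcome elaboration of a step the paper leaves implicit.
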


\subsection{Antipodes}

For a set $B$, $\cP,\cQ\in \poset(B)$, a set composition $(I_1,\ldots, I_\ell)\vDash B$ is a $\cQ$\textbf{\emph{-factorization}} of $\cP$ if 
\begin{enumerate}
\item[(a)] $\cP|_{I_j}$ is a convex subposet of $\cQ$,
\item[(b)]  all minimal elements of $\cP|_{I_j}$ are greater than the maximal elements of $\cP|_{I_{j-1}}$.
\end{enumerate}
The number $\ell_\cQ(I_1,\ldots, I_\ell)=\ell$ is the  \textbf{\emph{length}} of the factorization.
Let
$$\Fac_\cQ(\cP)=\{\text{$\cQ$-factorizations of $\cP$}\}.$$
For example, if 
$$\cQ=\begin{tikzpicture} [scale=.3,baseline=1]
\foreach \x/\y/\z/\w in {0/0/\heartsuit/1,-.5/1/\diamondsuit/2,.5/1/\clubsuit/3,0/2/\spadesuit/4}
	\node (\w) at (\x,\y)[inner sep=0pt] {$\scriptscriptstyle\z$};
\foreach \x/\y in {1/2,1/3,2/4,3/4}
	\draw (\x) -- (\y);
\end{tikzpicture},\cP= \begin{tikzpicture} [scale=.3,baseline=1.5]
\foreach \x/\y/\z/\w in {0/0/\heartsuit/1,0/1/\diamondsuit/2,0/2/\clubsuit/3,0/3/\spadesuit/4}
	\node (\w) at (\x,\y)[inner sep=0pt] {$\scriptscriptstyle\z$};
\foreach \x/\y in {1/2,2/3,3/4}
	\draw (\x) -- (\y);
\end{tikzpicture}, \qquad\text{then}\qquad
\Fac_\cQ(\cP)=\bigg\{\Big( \heartsuit,\diamondsuit,\clubsuit,\spadesuit\Big),\Big( \begin{tikzpicture} [scale=.4,baseline=1.5]
\foreach \x/\y/\z/\w in {0/0/\heartsuit/1,0/1/\diamondsuit/2}
	\node (\w) at (\x,\y)[inner sep=0pt] {$\scriptstyle\z$};
\foreach \x/\y in {1/2}
	\draw (\x) -- (\y);
\end{tikzpicture},\clubsuit,\spadesuit\Big),\Big( \heartsuit,\diamondsuit, \begin{tikzpicture} [scale=.4,baseline=1.5]
\foreach \x/\y/\z/\w in {0/0/\clubsuit/1,0/1/\spadesuit/2}
	\node (\w) at (\x,\y)[inner sep=0pt] {$\scriptstyle\z$};
\foreach \x/\y in {1/2}
	\draw (\x) -- (\y);
\end{tikzpicture}\Big),\Big( \begin{tikzpicture} [scale=.4,baseline=1.5]
\foreach \x/\y/\z/\w in {0/0/\heartsuit/1,0/1/\diamondsuit/2}
	\node (\w) at (\x,\y)[inner sep=0pt] {$\scriptstyle\z$};
\foreach \x/\y in {1/2}
	\draw (\x) -- (\y);
\end{tikzpicture},\begin{tikzpicture} [scale=.4,baseline=1.5]
\foreach \x/\y/\z/\w in {0/0/\clubsuit/1,0/1/\spadesuit/2}
	\node (\w) at (\x,\y)[inner sep=0pt] {$\scriptstyle\z$};
\foreach \x/\y in {1/2}
	\draw (\x) -- (\y);
\end{tikzpicture}\Big)\bigg\}.$$

By Takeuchi's formula and Lemma \ref{StructureSubgroupPoset} we have that for $\cQ\in \poset(A)$,
$$S(\delta_{\UT_\cQ})=\sum_{\cP\in \poset(A)} \Big(\sum_{\vec{I}\in \Fac_\cQ(\cP)} (-1)^{\ell_\cQ(\vec{I})-1} \Big) \delta_{\UT_\cP}.$$
Note that this formula does not depend on the ambient subgroup $\UT_\cR$ in which $\UT_\cQ$ is normal.  In fact, we obtain some cancellation.  The following result is similar to the power-sum result in \cite{BBT13}.

\begin{proposition} For $\cQ\in \poset(A)$,
$$S(\delta_{\UT_\cQ})=\sum_{\cP\in \poset(A)\atop \Fac_\cQ(\cP)=\{\vec{I}\}}  (-1)^{\ell_\cQ(\vec{I})-1}  \delta_{\UT_\cP}.$$
\end{proposition}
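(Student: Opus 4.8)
The plan is to start from Takeuchi's formula for the antipode, which has already been written out explicitly in the excerpt as
$$S(\delta_{\UT_\cQ})=\sum_{\cP\in \poset(A)} \Big(\sum_{\vec{I}\in \Fac_\cQ(\cP)} (-1)^{\ell_\cQ(\vec{I})-1} \Big) \delta_{\UT_\cP},$$
and to show that for each fixed $\cP$ the inner signed sum over $\cQ$-factorizations collapses to a single surviving term exactly when $\Fac_\cQ(\cP)$ is a singleton, and vanishes otherwise. Thus the whole content of the proposition is a sign-reversing involution (or an even simpler pairing) on the set $\Fac_\cQ(\cP)$ whenever that set has more than one element.

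The key structural observation I would exploit is that a $\cQ$-factorization $(I_1,\ldots,I_\ell)$ is determined by where one inserts ``cuts'' into a finer structure: condition (a) says each block is convex in $\cQ$, and condition (b) forces the blocks to be arranged compatibly with $\cQ$ so that minimal elements of a later block dominate maximal elements of the earlier one. The natural move is to find, for any given $\cP$ with $|\Fac_\cQ(\cP)|>1$, a distinguished adjacent pair of blocks that can either be merged or split while preserving the two factorization conditions. Concretely I would look at the coarsest factorization in $\Fac_\cQ(\cP)$ and identify the first index $j$ at which the block $I_j$ admits a nontrivial splitting $I_j = I_j' \sqcup I_j''$ that still satisfies (a) and (b); pairing the factorization that has $I_j$ whole with the one that has it split gives two factorizations whose lengths differ by exactly one, hence opposite signs $(-1)^{\ell-1}$. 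The main obstacle, and the step requiring the most care, is checking that this merge/split operation is a genuine involution on $\Fac_\cQ(\cP)\setminus\{\vec{I}_{\min}\}$: I must verify that the chosen splitting point is canonical (independent of which member of the pair we start from), that splitting never violates convexity in $\cQ$ or the domination condition (b), and that the unique fixed-point-free matching covers everything except precisely the cases where $\Fac_\cQ(\cP)$ is already a singleton.

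To make the pairing canonical I would argue that the conditions (a) and (b) force $\Fac_\cQ(\cP)$ to be an interval in the refinement lattice of set compositions of $A$ that refine the partition into $\cP$-connected-yet-$\cQ$-convex pieces; more precisely, there is a unique coarsest admissible factorization, and every admissible factorization is obtained from it by subdividing blocks at positions that are always legal. If that coarsest factorization can be nontrivially refined in at least one spot, then the set of admissible refinements of a single refinable block is a Boolean-lattice-like structure on which the alternating sum $\sum (-1)^{\ell-1}$ telescopes to zero by the standard binomial identity $\sum_{k}\binom{n}{k}(-1)^k=0$; only when no block is refinable (so $\Fac_\cQ(\cP)$ is the singleton $\{\vec I\}$) does the sum reduce to the lone term $(-1)^{\ell_\cQ(\vec I)-1}$, exactly matching the claimed formula.

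I would therefore organize the write-up as: (i) recall Takeuchi plus the explicit formula above; (ii) fix $\cP$ and analyze the combinatorial structure of $\Fac_\cQ(\cP)$, showing it is generated from a coarsest factorization by independent, position-wise subdivisions; (iii) conclude that the signed inner sum is a product over the independent subdivision sites of a local alternating sum, each factor of which is zero unless the site admits no subdivision; (iv) observe that all factors are forced to be trivial precisely when $\Fac_\cQ(\cP)=\{\vec I\}$, giving the stated surviving terms. I expect step (ii)—pinning down exactly which subdivisions of a $\cQ$-factorization remain $\cQ$-factorizations and proving these choices are independent—to be the crux, since it is where the interaction between $\cP$, $\cQ$-convexity, and the order condition (b) must be controlled; once that independence is established, the cancellation in (iii) is the routine binomial vanishing.
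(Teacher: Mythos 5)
Your plan is essentially the paper's own proof: the authors likewise start from the Takeuchi expansion, show that $\Fac_\cQ(\cP)$ is a Boolean interval (they build the unique finest factorization $\vec{L}$ by repeatedly splitting blocks, identify every element of $\Fac_\cQ(\cP)$ with a binary string of cut positions lying between the coarsest element $\vec{M}$ and $(1,\dots,1)$), and then kill the inner alternating sum by exactly the binomial/M\"obius cancellation you describe, leaving a surviving term precisely when $\Fac_\cQ(\cP)$ is a singleton. The step you flag as the crux --- that admissible subdivisions are independent, so the set of factorizations really is a Boolean interval --- is also the step the paper leans on, so your outline is faithful to their argument.
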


\begin{proof}  Suppose $\Fac_\cQ(\cP)\neq \emptyset$.  Let $\vec{I}\in \Fac_\cQ(\cP)$.  If there exists $I_j\in \vec{I}$ such that $\Fac_{\cQ|_{I_j}}(\cP|_{I_j})\neq \emptyset$, we may replace $I_j$ by any element of $\Fac_{\cQ|_{I_j}}(\cP|_{I_j})$.  If we continue in this way until no part splits further we obtain an element $\vec{L}=(L_1,\ldots, L_\ell)\in \Fac_{\cQ}(\cP)$, so
$$\cP=L_1\prec_\cP L_2 \prec_\cP \cdots\prec_\cP L_\ell,
$$
where for each $1\leq j\leq \ell$, every subset $A\subseteq L_j$ satisfies if  $(a,b)\in A\times (L_j-A)$ with $a\prec b$, then there exists $(a',b')\in A\times (L_j-A)$ such that $a'\nprec b'$.  It follows that every element in $\vec{I}\in \Fac_{\cQ}(\cP)$ there is a set partition $\{A_1,\ldots,A_k\}$ of $\{1,2,\ldots,\ell\}$ with $A_1<A_2<\cdots<A_k$ such that 
$$\vec{I}=(L_{A_1},\dots,L_{A_k}),\quad \text{where}\quad L_{A_i}=\bigcup_{j\in A_i} L_j.$$

On the other hand, there is a coarsest set partition $\{C_1,\ldots,C_k\}$ of $\{1,2,\ldots,\ell\}$ such that 
$$\vec{M}=(L_{C_1},\dots,L_{C_k})\in \Fac_\cQ(\cP).$$
If we assign a binary string $(a_1,\ldots,a_{\ell-1})\in \{0,1\}$ for each set partition $\{A_1,\ldots,A_m\}$, by the rule 
$$a_j=\left\{\begin{array}{ll} 1 & \text{if $j$ and $j+1$ are in different parts}\\ 0 & \text{if $j$ and $j+1$ are in the same part.}\end{array}\right.$$
Then $\Fac_\cQ(\cP)$ is isomorphic to an interval in the poset of binary strings of length $\ell-1$, where the top element is $(1,1,\ldots,1)$ and the bottom element corresponds to  $(L_{C_1},\dots,L_{C_k})$. But then
$$\sum_{\vec{I}\in \Fac_\cQ(\cP)} (-1)^{\ell_\cQ(\vec{I})-1}=(-1)^{\ell_\cQ(\vec{M})-1}\mu(\vec{M},\vec{L}),$$
where $\mu$ is the M\"obius function.  This is nonzero exactly when $\vec{M}=\vec{L}$.
\end{proof} 

We also obtain a formula for the supercharacter basis.  For $\cR\in \poset(B)$, $\lambda\in \nnopart(\cR)$ and a set composition $\vec{A}=(A_1,\ldots, A_\ell)\vDash B$, let
\begin{align*}
\cR|_{\vec{A}} &= \cR|_{A_1}\cup \cR|_{A_2}\cup \cdots \cup  \cR|_{A_\ell}\\
\UT_{\cR|_{\vec{A}}} & = \UT_{\cR|_{A_1}}\times \cdots \times \UT_{\cR|_{A_\ell}}\\
\lambda_{\vec{A}} &=\lambda_{A_1}\cup\cdots\cup \lambda_{A_\ell}\in \nnopart(\cR|_{\vec{A}})\\
\UT_{\lambda_{\vec{A}}}&=\UT_{\lambda_{A_1}}\times \cdots \times \UT_{\lambda_{A_\ell}}\\
\cR_\lambda&\subseteq \cR \quad \text{where $\UT_{\cR_\lambda}=\UT_\lambda$}.
\end{align*}

Note that by Corollary \ref{PatternRestriction} and Lemma \ref{RestrictionCombinatorics} we have that if $\nu\notin \Int^{\lambda}_{\lambda_{\vec{A}}}\subseteq \Int(\cR|_{\vec{A}})\subseteq \Int(\cR)$ for some $\vec{A}\vDash B$, then $\langle S(\chi^\lambda),\chi^\nu\rangle=0$.  Let
\begin{align*}
\Res^\lambda &=\bigcup_{\vec{A}\vDash B} \{\nu\cup \lambda_{\vec{A}}\mid \nu\in\anti\Big(\Int^{\lambda}_{\lambda_{\vec{A}}}\Big)\}\\
&=\{\nu\in \nnopart(\cR)\mid (\nu-\lambda)\cap\bigcap_{[i,l]\in \lambda -\nu} \cR_{[i,l]}=\emptyset\}.
\end{align*}
For $\lambda\in \nnopart(\cR)$, a $\cR$-factorization $(I_1,\ldots, I_\ell)\in \Fac_\cR(\cP)$ has a \textbf{\emph{$\lambda$-neutral cut}} if there exists $1\leq j\leq \ell-1$ such that 
$$\{[a,b]\mid a\in \max\{I_j\},b\in \min\{I_{j+1}\}\}\subseteq \Int^\circ(\cR_\lambda).$$
We say $\Fac_\cR(\cP)$ is \textbf{\emph{$\lambda$-atomic}} if it is nonempty and the longest element has no $\lambda$-neutral cuts.

\begin{theorem}
For $\cR\in \poset(B)$, and $\lambda\in \nnopart(\cR)$,
$$S(\chi^\lambda)=\sum_{\cP\in \poset(B)\atop \Fac_\cR(\cP)\text{ $\lambda$-atomic}} \sum_{\nu\in \nnopart(\cP)\cap \Res^\lambda} \frac{\chi^\lambda(1)}{(q-1)^{|\lambda\cap \nu|}}\Big(\sum_{{\vec{I}\in \Fac_\cR(\cP)\atop \nu_{\vec{I}} =\nu}\atop \lambda_{\vec{I}}=\lambda\cap \nu}(-1)^{\ell(\vec{I})-1}\frac{|\UT_\lambda\cap\UT_{\cR|_{\vec{I}}}|}{|\UT_{\cR|_{\vec{I}}}|}\Big) \chi^\nu.  $$
Furthermore, as polynomials in $q$, the coefficients are nonzero (though they may have integral roots).
\end{theorem}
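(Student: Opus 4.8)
The plan is to compute $S(\chi^\lambda)$ directly from Takeuchi's formula and then extract the cancellation, following the template of the preceding proposition for $S(\delta_{\UT_\cQ})$ but now carrying the extra scalar weights produced by the supercharacter coproduct. In the same form used there, Takeuchi's formula gives $S(\chi^\lambda)=\sum_{\vec A}(-1)^{\ell(\vec A)-1}\,m_{\vec A}\circ\Delta_{\vec A}(\chi^\lambda)$, the sum running over set compositions $\vec A=(A_1,\ldots,A_\ell)\vDash B$. I would first evaluate the iterated coproduct $\Delta_{\vec A}$ by iterating Theorem \ref{StructureConstantsSupercharacters}(b) (equivalently Corollary \ref{PatternRestriction}): this restricts $\chi^\lambda$ to $\UT_{\cR|_{\vec A}}=\UT_{\cR|_{A_1}}\times\cdots\times\UT_{\cR|_{A_\ell}}$ and, via Lemma \ref{RestrictionCombinatorics}, expands as a sum over tuples of antichains $\nu_j\in\anti(\Int^\lambda_{\lambda_{A_j}})$ carrying the scalar weight $\frac{|\UT_\lambda\cap\UT_{\cR|_{\vec A}}|}{|\UT_{\lambda_{\vec A}}|}\frac{\chi^\lambda(1)}{\chi^{\lambda_{\vec A}}(1)}$. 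Applying the product formula Theorem \ref{StructureConstantsSupercharacters}(a), the multiplication $m_{\vec A}$ collapses each tensor $\chi^{\lambda_{A_1}\cup\nu_1}\otimes\cdots$ to a single $\chi^\nu$ in $\f(\UT_\cP)$, where $\cP=\cR|_{A_1}.\cdots.\cR|_{A_\ell}$ and $\nu=\lambda_{\vec A}\cup\nu_1\cup\cdots\cup\nu_\ell$.

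The second step is to reindex by the output pair $(\cP,\nu)$ instead of by $\vec A$. The key observation is that the set compositions $\vec A$ that produce a fixed poset $\cP$ by restricting and reconcatenating are exactly the $\cR$-factorizations $\Fac_\cR(\cP)$: the convexity condition (a) and the stacking condition (b) in the definition of a $\cQ$-factorization are precisely what forces $\cR|_{A_1}.\cdots.\cR|_{A_\ell}=\cP$. Pinning the collapsed partition to a fixed $\nu$ then imposes the two constraints $\nu_{\vec I}=\nu$ (every arc of $\nu$ is internal to a block) and $\lambda_{\vec I}=\lambda\cap\nu$. Finally I would rewrite the scalar weight using the degree identity $\chi^\mu(1)=\frac{|\UT|}{|\UT_\mu|}(q-1)^{|\mu|}q^{-|\mu|}$ applied to $\chi^{\lambda_{\vec A}}(1)$, together with $|\lambda\cap\nu|=|\lambda_{\vec A}|$, to reach the factor $\frac{\chi^\lambda(1)}{(q-1)^{|\lambda\cap\nu|}}\frac{|\UT_\lambda\cap\UT_{\cR|_{\vec I}}|}{|\UT_{\cR|_{\vec I}}|}$ displayed in the statement; the exact bookkeeping of powers of $q$ coming from $|\UT_{\lambda_{\vec A}}|$ and $\chi^{\lambda_{\vec A}}(1)^{-1}$ is the one routine point I would check line by line.

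The heart of the argument, and the main obstacle, is the cancellation collapsing the outer sum to $\lambda$-atomic $\cP$. As in the $\delta_{\UT_\cQ}$ case, $\Fac_\cR(\cP)$ is an interval in a Boolean lattice of binary strings recording which cuts of the finest factorization $\vec L$ are retained, and $(-1)^{\ell(\vec I)-1}$ is the parity of the number of cuts. The new ingredient is the weight $\frac{|\UT_\lambda\cap\UT_{\cR|_{\vec I}}|}{|\UT_{\cR|_{\vec I}}|}$, which is the power of $q$ counting within-block proper intervals that do \emph{not} lie in $\Int^\circ(\cR_\lambda)$. I would show this weight is invariant exactly across a $\lambda$-neutral cut: inserting such a cut removes only intervals lying in $\Int^\circ(\cR_\lambda)$, hence intervals already omitted from both numerator and denominator, so the ratio is unchanged. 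Consequently two factorizations differing by a single $\lambda$-neutral cut carry equal weights and opposite signs and cancel, and the residual telescoping (a Möbius computation over the free $\lambda$-neutral cuts, as for $S(\delta_{\UT_\cQ})$) leaves a nonzero contribution only when the longest factorization has no $\lambda$-neutral cut, i.e.\ when $\Fac_\cR(\cP)$ is $\lambda$-atomic. The delicate point, where most care is needed, is to verify that inserting or deleting a $\lambda$-neutral cut preserves both constraints $\nu_{\vec I}=\nu$ and $\lambda_{\vec I}=\lambda\cap\nu$, so that the sign-reversing pairing is well defined on the constrained index set.

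For the final assertion that each coefficient is a nonzero polynomial in $q$, note that after clearing the denominator $(q-1)^{|\lambda\cap\nu|}$ and the polynomial factor $\chi^\lambda(1)$, the coefficient of $\chi^\nu$ is a signed sum of powers of $q$, namely $\sum_{\vec I}(-1)^{\ell(\vec I)-1}q^{-d(\vec I)}$ with $d(\vec I)$ the number of within-block non-$\lambda$ intervals. I would isolate the finest factorization surviving the constraints: any strict coarsening merges blocks across a non-$\lambda$-neutral cut (neutral ones having already cancelled), which strictly increases $d$, so this finest term attains an extreme power of $q$ achieved by no other summand. That extreme monomial cannot cancel, so the polynomial is not identically zero; integral roots are not excluded because the remaining lower-order terms can still conspire to vanish at particular integer values of $q$.
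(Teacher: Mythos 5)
Your proposal is correct and follows essentially the same route as the paper: Takeuchi's formula combined with Corollary \ref{PatternRestriction} and Lemma \ref{RestrictionCombinatorics}, reindexing by $(\cP,\nu)$ via $\Fac_\cR(\cP)$, the same scalar simplification, the sign-reversing pairing on a $\lambda$-neutral cut of the longest factorization, and the extreme-power-of-$q$ argument for nonvanishing. The two points you flag for careful checking (the $q$-bookkeeping and the preservation of the constraints $\nu_{\vec I}=\nu$, $\lambda_{\vec I}=\lambda\cap\nu$ under insertion of a neutral cut) are exactly the verifications the paper carries out, and they go through as you expect.
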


\begin{remark}
An example that shows the coefficients can be zero is as follows.  The coefficient of $\emptyset\in \nnopart(1<2<3)$ in $S(\chi^{\{[1,3]\}})$ is 
$(q-1)(q-2)$, which is generically nonzero, but zero if $q=2$.  However, if $q$ is sufficiently large the coefficients are always nonzero.
\end{remark} 

\begin{proof} By Takeuchi's formula and Lemma \ref{RestrictionCombinatorics},
\begin{align*}
S(\chi^\lambda)&=\sum_{\vec{A}=(A_1,\ldots, A_\ell)\vDash B} (-1)^{\ell-1} m_{\vec{A}}\circ \Delta_{\vec{A}}(\chi^\lambda)\\
&=\sum_{\vec{A}=(A_1,\ldots, A_\ell)\vDash B} (-1)^{\ell-1} \frac{|\UT_\lambda\cap \UT_{\cR|_{\vec{A}}}|}{|\UT_{\lambda_{\vec{A}}}|}\frac{\chi^\lambda(1)}{\chi^{\lambda_{\vec{A}}}(1)}\sum_{{\nu=(\nu_1,\ldots,\nu_\ell)\in \nnopart(\cR|_{\vec{A}})\atop \nu-\lambda_{\vec{A}}\in \Int_{\lambda_{\vec{A}}}^\lambda}\atop \text{an anti-chain} } \chi^{\nu_1.\nu_2.\cdots.\nu_\ell}. 
\end{align*}
Fix $\nu\in \Res^\lambda$ and $\cP\in \poset(B)$ such that $\Fac_\cR(\cP)\neq \emptyset$ and $\nu\in \nnopart(\cP)$.  Then the coefficient of $\chi^\nu$ in $S(\chi^\lambda)$ is
\begin{align*}
\sum_{{\vec{I}\in \Fac_\cR(\cP)\atop \nu_{\vec{I}} =\nu}\atop \lambda_{\vec{I}}=\lambda\cap \nu} (-1)^{\ell(\vec{I})-1} \frac{|\UT_\lambda\cap \UT_{\cR|_{\vec{I}}}|}{|\UT_{\lambda_{\vec{I}}}|}\frac{\chi^\lambda(1)}{\chi^{\lambda_{\vec{I}}}(1)}&=\chi^\lambda(1)\sum_{{\vec{I}\in \Fac_\cR(\cP)\atop \nu_{\vec{I}} =\nu}\atop \lambda_{\vec{I}}=\lambda\cap \nu} (-1)^{\ell(\vec{I})-1} \frac{|\UT_\lambda\cap \UT_{\cR|_{\vec{I}}}|}{\chi^{\lambda\cap \nu}(1)|\UT_{\lambda\cap\nu}|}\\
&=\frac{\chi^\lambda(1)}{(q-1)^{|\lambda\cap\nu|}}\sum_{{\vec{I}\in \Fac_\cR(\cP)\atop \nu_{\vec{I}} =\nu}\atop \lambda_{\vec{I}}=\lambda\cap \nu} (-1)^{\ell(\vec{I})-1} \frac{|\UT_\lambda\cap \UT_{\cR|_{\vec{I}}}|}{|\UT_{\cR|_{\vec{I}}}|}.
\end{align*}
Let $\vec{L}\in \Fac_\cR(\cP)$ be the longest element.  If $\vec{L}$ has a $\lambda$-neutral cut, then $\cP$ can be split into $\cP|_A\prec_\cP \cP|_{A'}$,
such that each pair $[a,b]\in \Int^\circ(\cR_\lambda)$ for $a$ a maximal element of $A$ and $b$ a minimal element of $A'$.  In particular, if $[i,j]\in \lambda$ then either $\{i,j\}\subseteq A$ or $\{i,j\}\subseteq A'$.  Furthermore, if $[i,j]\in \nu$, then $\{i,j\}\subseteq A$ or $\{i,j\}\subseteq A'$.  Thus, the presence of the $\lambda$-neutral cut in $\vec{I}$ does not affect whether $\vec{I}$ satisfies  $\nu_{\vec{I}} =\nu$ and $\lambda_{\vec{I}}=\lambda\cap \nu$.  Thus, we have a bijection
$$\begin{array}{c}\{\vec{I}\in \Fac_\cR(\cP)\mid \vec{I}|_A\cup \vec{I}|_{A'}=\vec{I},\lambda_{\vec{I}}=\lambda\cap \nu\}\\
\updownarrow\\ \{\vec{I}\in \Fac_\cR(\cP)\mid \vec{I}|_A\cup \vec{I}|_{A'}\neq   \vec{I},\nu_{\vec{I}} =\nu,\lambda_{\vec{I}}=\lambda\cap \nu\}\end{array}$$
obtained by either adding or removing this $\lambda$-neutral cut.  Furthermore, since all pairs between $A$ and $A'$ are in $\Int(\cR_\lambda)$, the existence of the fixed $\lambda$-neutral cut does not affect the value of 
$$\frac{|\UT_\lambda\cap \UT_{\cR|_{\vec{I}}}|}{|\UT_{\cR|_{\vec{I}}}|}.$$
Thus, the bijection is sign reversing and the coefficient is zero if $\Fac_\cR(\cP)$ is not $\lambda$-atomic.

Suppose $\Fac_\cR(\cP)$ is $\lambda$-atomic.  If there exists $\vec{I}\in \Fac_\cR(\cP)$ such that $\nu_{\vec{I}} =\nu$ and $\lambda_{\vec{I}}=\lambda\cap \nu$, then there exists a unique longest such element $\vec{L}$. Then 
$$\sum_{{\vec{I}\in \Fac_\cR(\cP)\atop \nu_{\vec{I}} =\nu}\atop \lambda_{\vec{I}}=\lambda\cap \nu}\hspace{-.25cm}(-1)^{\ell(\vec{I})-1}\frac{|\UT_\lambda\cap\UT_{\cR|_{\vec{I}}}|}{|\UT_{\cR|_{\vec{I}}}|}=(-1)^{\ell(\vec{L})-1}\frac{|\UT_\lambda\cap\UT_{\cR|_{\vec{L}}}|}{|\UT_{\cR|_{\vec{L}}}|} + \hspace{-.25cm}\sum_{{\vec{L}\neq\vec{I}\in \Fac_\cR(\cP)\atop \nu_{\vec{I}} =\nu}\atop \lambda_{\vec{I}}=\lambda\cap \nu}\hspace{-.25cm}(-1)^{\ell(\vec{I})-1}\frac{|\UT_\lambda\cap\UT_{\cR|_{\vec{I}}}|}{|\UT_{\cR|_{\vec{I}}}|}.$$
Since $\vec{L}$ has no $\lambda$-neutral cuts, any set composition in $\vec{I}\in \Fac_\cR(\cP)$ such that $\vec{I}\neq \vec{L}$ will add elements in $\UT_{\cR|_{\vec{I}}}$ that are not in  $\UT_\lambda\cap\UT_{\cR|_{\vec{I}}}$.  Thus,
$$\frac{|\UT_\lambda\cap\UT_{\cR|_{\vec{L}}}|}{|\UT_{\cR|_{\vec{L}}}|} >\frac{|\UT_\lambda\cap\UT_{\cR|_{\vec{I}}}|}{|\UT_{\cR|_{\vec{I}}}|}.$$
Since all the quotients are powers of $q$, there exists $k\in \ZZ_{\geq 0}$ and $c_l\in \ZZ$ such that 
$$\sum_{{\vec{I}\in \Fac_\cR(\cP)\atop \nu_{\vec{I}} =\nu}\atop \lambda_{\vec{I}}=\lambda\cap \nu}\hspace{-.25cm}(-1)^{\ell(\vec{I})-1}\frac{|\UT_\lambda\cap\UT_{\cR|_{\vec{I}}}|}{|\UT_{\cR|_{\vec{I}}}|}=\pm\Big(\frac{1}{q^k}+\sum_{l>k} \frac{c_l}{q^l}\Big),\quad\text{where}\quad \sum_{l>k} |c_l|=|\Fac_\cR(\cP)|-1.$$
Generically this polynomial is nonzero, though for specific values of $q$ it could have a root. 
\end{proof}

We can apply the theorem to the specific case of trivial characters to get a pleasing result.   For $\cP\in \poset(B)$ let $\emptyset_\cP=\emptyset\in \nnopart(\cP)$.

\begin{corollary}  For $\cR\in \poset(B)$,    
$$S(\chi^{\emptyset_\cR})=\sum_{\cP\in \poset(B)\atop \Fac_\cR(\cP)=\{\vec{L}\}} (-1)^{\ell(\vec{L})-1}\chi^{\emptyset_{\cP}}.$$
\end{corollary}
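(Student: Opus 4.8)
The plan is to specialize the preceding Theorem to $\lambda=\emptyset_\cR$ and then collapse every factor. First I would note that $\UT_{\emptyset_\cR}=\bigcap_{[i,j]\in\emptyset}\UT_{\cR_{[i,j]}}=\UT_\cR$, so that $\cR_{\emptyset_\cR}=\cR$ and, by Proposition \ref{PatternCharacterFormula} evaluated at the identity (where $\mu=\emptyset$, so $|\lambda\cap\mu|=0$), $\chi^{\emptyset_\cR}(1)=|\UT_\cR|/|\UT_{\emptyset_\cR}|=1$. Hence the overall normalization $\chi^\lambda(1)$ is trivial.

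Next I would determine which characters survive. With $\lambda=\emptyset_\cR$ we have $\lambda\cap\nu=\emptyset$ and $\lambda-\nu=\emptyset$ for every $\nu$, so the closed form for $\Res^{\emptyset_\cR}$ reduces to the condition $\nu\cap\Int^\circ(\cR)=\emptyset$, forcing $\Res^{\emptyset_\cR}=\{\emptyset\}$; thus for each $\cP$ only $\nu=\emptyset_\cP$ appears. Furthermore $(q-1)^{|\lambda\cap\nu|}=1$, and since $\UT_{\emptyset_\cR}=\UT_\cR\supseteq\UT_{\cR|_{\vec I}}$ the ratio $|\UT_\lambda\cap\UT_{\cR|_{\vec I}}|/|\UT_{\cR|_{\vec I}}|$ equals $1$ for every $\vec I$. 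Finally, with $\lambda=\nu=\emptyset$ the matching conditions $\nu_{\vec I}=\nu$ and $\lambda_{\vec I}=\lambda\cap\nu$ are automatically satisfied, so the inner sum runs over all of $\Fac_\cR(\cP)$. Substituting these simplifications into the Theorem leaves
$$S(\chi^{\emptyset_\cR})=\sum_{\cP\in\poset(B)\atop \Fac_\cR(\cP)\ \emptyset_\cR\text{-atomic}}\Big(\sum_{\vec I\in\Fac_\cR(\cP)}(-1)^{\ell(\vec I)-1}\Big)\chi^{\emptyset_\cP}.$$
At this point the inner sum is exactly the alternating sum analyzed in the proof of the earlier proposition computing $S(\delta_{\UT_\cQ})$, so I would reuse that argument verbatim: $\Fac_\cR(\cP)$ is isomorphic to an interval in the Boolean lattice of binary strings (top element the longest factorization $\vec L$, bottom element the coarsest), and $\sum_{\vec I\in\Fac_\cR(\cP)}(-1)^{\ell(\vec I)-1}$ is, up to sign, the M\"obius value of that interval. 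Therefore it vanishes unless the interval is a single point, that is unless $\Fac_\cR(\cP)=\{\vec L\}$, in which case it equals $(-1)^{\ell(\vec L)-1}$.

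The one subtlety I expect to be the main obstacle is reconciling the outer $\emptyset_\cR$-atomic restriction with the singleton condition, so that no singleton $\cP$ is silently dropped. For this I would invoke the sign-reversing involution built in the proof of the Theorem: if $\vec L$ possessed an $\emptyset_\cR$-neutral cut then, because $\cR_{\emptyset_\cR}=\cR$, that involution produces a second, strictly coarser factorization of $\cP$ lying in $\Fac_\cR(\cP)$, contradicting $\Fac_\cR(\cP)=\{\vec L\}$. Hence every $\cP$ with singleton $\Fac_\cR(\cP)$ is automatically $\emptyset_\cR$-atomic and is retained above, while the $\emptyset_\cR$-atomic but non-singleton $\cP$ contribute $0$ by the M\"obius vanishing. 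Keeping only the surviving terms turns the display into
$$S(\chi^{\emptyset_\cR})=\sum_{\cP\in\poset(B)\atop \Fac_\cR(\cP)=\{\vec L\}}(-1)^{\ell(\vec L)-1}\chi^{\emptyset_\cP},$$
which is the claim.
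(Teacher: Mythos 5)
Your proposal is correct and follows essentially the same route as the paper: specialize the antipode theorem to $\lambda=\emptyset_\cR$, note that $\UT_{\emptyset_\cR}=\UT_\cR$ makes every normalizing factor equal to $1$ and forces $\nu=\emptyset_\cP$, and then match the index set. The only (harmless) divergence is at the final step: the paper observes directly that $\emptyset_\cR$-atomicity forces the longest factorization $\vec{L}$ to also be the coarsest, so that $\Fac_\cR(\cP)=\{\vec{L}\}$ and no cancellation is needed, whereas you prove the reverse inclusion (singleton implies atomic) and dispose of hypothetical atomic-but-non-singleton terms by rerunning the interval/M\"obius argument --- valid, but those terms are in fact already vacuous.
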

\begin{proof}
Since $\UT_{\emptyset_\cR}=\UT_\cR$, we have that $\UT_{\emptyset_\cR}\cap \UT_{\cR|_{\vec{A}}}=\UT_{\cR|_{\vec{A}}}$ for all $\vec{A}\vDash B$.  Thus, if $\Fac_\cR(\cP)$ is $\emptyset_\cR$-atomic, then for the longest element $\vec{L}=(L_1,\ldots,L_\ell)\in \Fac_\cR(\cP)$ we must have that for each $1\leq j\leq \ell-1$ at least one maximal elements of $L_j$ is greater in $\cR$ than a minimal element in $L_{j+1}$.  Thus, $\vec{L}$ is also the minimal element in $\Fac_\cR(\cP)$.   The other elements of the formula reduce in a straightforward way.
\end{proof}

\subsection{Primitives}

In \cite{AL15}, the authors indicate how to find the dimension of  the lie algebra of primitive elements of a (co)free Hopf monoid in each degree. For $\pattern_{\NPL}$ we go one step forward and give a full system of algebraic independent primitive elements.
Fix $a\in A$.  For $\cQ$ atomic, define
$$S^{(a)}_{\UT_\cQ}=\sum_{\Fac_\cQ(\cP)=\{\vec{I}\}\atop a\in I_1} (-1)^{\ell_\cQ(\vec{I})-1}\delta_{\UT_\cP}.$$ 

\begin{theorem}
Fix a functor $\select:\{\text{sets}\}\rightarrow \{\text{sets of size 1}\}$ such that $\select(A)\subseteq A$. 
Then 
$$\{S^{(\select(A))}_{\UT_\cQ}\mid \UT_\cQ\triangleleft \UT_\cR, \cR\in \poset(A), (\cQ,\cR)\text{ atomic}\}$$
is a full system of algebraically independent primitive elements in $\pattern_{\NPL}$.
\end{theorem}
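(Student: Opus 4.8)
The plan is to deduce algebraic independence and fullness from the freeness proposition by a triangularity argument, and to establish primitivity by a direct coproduct computation. Since $\pattern_\NPL$ is the free monoid on $\{\delta_{\UT_\cQ}\mid (\cR,\cQ)\text{ atomic}\}$, it suffices for the first two assertions to show that the $S^{(\select(A))}_{\UT_\cQ}$ descend to a basis of the indecomposables $I/I^2$ (where $I$ is the augmentation ideal), because a basis of indecomposables in a free connected monoid is automatically a set of free generators, hence algebraically independent and generating. Primitivity is then the separate, and harder, point.

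For the triangularity I would expand $S^{(a)}_{\UT_\cQ}=\sum_{\Fac_\cQ(\cP)=\{\vec I\},\,a\in I_1}(-1)^{\ell_\cQ(\vec I)-1}\delta_{\UT_\cP}$ and analyze each term. A singleton $\cQ$-factorization $\vec I=(I_1,\dots,I_\ell)$ forces $\cP=\cQ|_{I_1}.\cdots.\cQ|_{I_\ell}$ to be an honest concatenation with ambient $\cR|_{I_1}.\cdots.\cR|_{I_\ell}$, so for $\ell\geq 2$ the pair factors at $I_1$ and, by Lemma \ref{StructureSubgroupPoset}(a), $\delta_{\UT_\cP}$ is a product of two positive-degree elements, i.e.\ it lies in $I^2$. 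For the leading ($\ell=1$) term I would use atomicity of $(\cR,\cQ)$: if $\cQ=\cQ|_J.\cQ|_{J^c}$ then every ambient $\cR\supseteq\cQ$ with $\UT_\cQ\triangleleft\UT_\cR$ also concatenates at $J$ (it can acquire no relation $J^c\prec J$ and already contains all $J\prec J^c$), contradicting atomicity; hence $\cQ$ does not concatenate, $\Fac_\cQ(\cQ)=\{(A)\}$ is a singleton, and $\cP=\cQ$ occurs with coefficient $+1$. Therefore $S^{(\select(A))}_{\UT_\cQ}\equiv\delta_{\UT_\cQ}\pmod{I^2}$, and the conclusion about indecomposables follows immediately.

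For primitivity I would prove $\Res_{S,T}(S^{(a)}_{\UT_\cQ})=0$ for every splitting $A=S\sqcup T$ with $S,T\neq\emptyset$. By Lemma \ref{StructureSubgroupPoset}(b), iterated, $\Res_{S,T}(\delta_{\UT_\cP})=\delta_{\UT_{\cP|_S}}\otimes\delta_{\UT_{\cP|_T}}$, which depends only on the one-sided data (the restrictions to $S$ and to $T$ of both $\cP$ and of the ambient $\cR|_{\vec I}$); all relations of $\cP$ crossing between $S$ and $T$, and all block boundaries separating $S$-elements from $T$-elements, disappear after restriction. Consequently $\Res_{S,T}(S^{(a)}_{\UT_\cQ})$ regroups as a sum, over fixed one-sided data, of the signed counts $\sum_\cP(-1)^{\ell-1}$, and the whole claim reduces to showing that each such signed count is zero.

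To annihilate each signed count I would construct a sign-reversing involution on the contributing $\cP$ (equivalently on their singleton factorizations with $a\in I_1$) that toggles a single canonical cross-boundary, changing $\ell$ by exactly one while preserving all one-sided data and the condition $a\in I_1$. On the running example $\cQ=\cR=$ antichain on $\{1,2\}$, $S=\{1\}$, $T=\{2\}$, $a=1$, this pairs the antichain (one block) with the chain $1<2$ (two blocks) and excludes the chain $2<1$ precisely because $1\notin I_1$, so the two surviving terms have opposite signs and equal restriction and cancel. The main obstacle is to define this toggle canonically in general and to verify that it is a fixed-point-free involution: one must show that the constraint $a\in I_1$ always singles out a unique admissible cross-boundary at which merging or splitting affects only $S$-to-$T$ relations and preserves the singleton property of the factorization (handling the cases $a\in S$ and $a\in T$ separately). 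This is the coproduct analogue of the $\lambda$-neutral-cut involution used in the antipode computation, and I expect it to be the crux. Once it is in place, primitivity holds, and together with the indecomposability argument above it follows that the $S^{(\select(A))}_{\UT_\cQ}$ are algebraically independent primitive generators; since they generate $\pattern_\NPL=U(\mathfrak{L})$ freely as a monoid, they generate the free Lie algebra $\mathfrak{L}$ of primitives, giving a full system.
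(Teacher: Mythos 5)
Your argument for algebraic independence and fullness is sound and is essentially the paper's argument in different clothing: the paper orders $\poset(A)$ by $|\UT_\cQ|$ and observes that $S^{(a)}_{\UT_\cQ}=\delta_{\UT_\cQ}+(\text{terms indexed by posets with strictly more relations})$, whereas you work modulo $I^2$; both are triangularity arguments resting on the freeness proposition, and your observation that atomicity of the pair $(\cR,\cQ)$ is equivalent to $\cQ$ itself not concatenating (so that $\Fac_\cQ(\cQ)=\{(A)\}$ and the leading coefficient is $+1$) is correct and in fact makes explicit a point the paper leaves implicit.

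The genuine gap is primitivity. You correctly identify that one must show $\Res_{S,T}(S^{(a)}_{\UT_\cQ})=0$ for every nontrivial splitting $A=S\sqcup T$, and you propose a sign-reversing involution toggling a canonical cross-boundary, but you explicitly defer its construction and verification, calling it ``the crux.'' As written, primitivity is therefore asserted, not proved, and this is not a routine verification: the cancellation pairs terms $\delta_{\UT_\cP}$ indexed by \emph{different} posets $\cP$ whose restrictions to $S$ and $T$ agree, one must check that toggling a cross-boundary preserves the condition that $\Fac_\cQ(\cP)$ is a singleton with $a\in I_1$ (your own two-element example shows how the constraint $a\in I_1$ is what breaks the symmetry between $1<2$ and $2<1$), and one must also track the ambient posets in which each $\delta_{\UT_\cP}$ lives. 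The paper does not carry out this combinatorics either; it disposes of primitivity in one line by citing \cite[Theorem 7.1]{BBT13}, where the analogous ``power-sum'' elements are shown to be primitive. To complete your proof you would need either to invoke that result (after checking it applies in the $\pattern_\NPL$ setting) or to actually construct and verify the involution; until one of these is done, only the ``algebraically independent'' and ``full system'' halves of the theorem are established.
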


\begin{proof}
By \cite[Theorem 7.1]{BBT13}, each $S^{(a)}_{\UT_\cQ}$ is projective, and by definition they are nonzero.  Next note that  
$$S^{(a)}_{\UT_\cQ}=\delta_{\UT_\cQ}+\sum_{\Fac_\cQ(\cP)=\{\vec{I}\},\cP\neq \cQ\atop a\in I_1} (-1)^{\ell_\cQ(\vec{I})-1}\delta_{\UT_\cP}.$$ 
If we put an ordering on $\poset(A)$ by $\cQ\preceq \cR$ if $|\UT_\cQ|\leq |\UT_\cR|$. Since $\Fac_\cQ(\cR)\neq \emptyset$ implies $\cR$ has at least as many relations as $\cQ$, it also implies that $|\UT_\cQ|\leq |\UT_\cR|$ with equality only when $\cQ=\cR$.  Thus, $S^{(a)}_{\UT_\cQ}$ are triangular in the $\delta_{\UT_\cR}$ basis, with leading term the algebraically independent elements $\delta_{\UT_\cQ}$.  Conversely, given $\cR\in \poset(A)$, there exists a factorization into atomics 
$$\cR=\cQ_1.\cQ_2.\cdots.\cQ_\ell\quad \text{with}\quad \cQ_j\in \poset(A_j)$$
and 
$$S^{(\select(A_1))}_{\UT_{\cQ_1}}S^{(\select(A_2))}_{\UT_{\cQ_2}}\cdots S^{(\select(A_\ell))}_{\UT_{\cQ_\ell}}=\delta_{\UT_\cR}+\sum_{\cS\in \poset(A)\atop \UT_\cS>\UT_\cR} c_\cR^\cS \delta_{\UT_\cS}.$$
Thus, the $S^{(a)}_{\UT_\cQ}$ generate a basis of $\pattern_{\NPL}$ and are a full set of algebraically independent projective elements.
\end{proof}

In \cite{NT}, Novelli--Thibon define a graded Hopf algebra 
$$\mathsf{CQSym}=\bigoplus_{n\geq 0}\mathsf{CQSym}_n $$
with $\dim(\mathsf{CQSym}_n)$ equal to the $n$th Catalan number $C_n$.  In the case of this paper they index a basis in each graded dimension by non-decreasing parking functions, which are easily seen to be in bijection with non-crossing partitions (which in turn are in bijection with non-nesting partitions).   They also prove that $\mathsf{CQSym}$ is free over a set of primitive generators indexed by connected parking functions (parking functions that cannot be written as a shifted concatenation of two other parking functions).   For degree $n$, these are enumerated by
$$\sum_{j=2}^nC_{j-2}C_{n-j}=C_{n-1}.$$

\begin{corollary}
The Hopf algebra $\mathsf{CQSym}$ is isomorphic to $\mathrm{NCSym}_\nn$.
\end{corollary}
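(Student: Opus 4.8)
The plan is to combine a Catalan dimension count, the freeness of $\pattern_\NPL$, and the explicit structure constants into an isomorphism of graded connected Hopf algebras. First I would pin down $\mathrm{NCSym}_\nn$ concretely: since the width-one posets are chains and $\nnopart$ of the chain on $n$ elements is enumerated by the $n$th Catalan number $C_n$, the degree-$n$ component of $\mathrm{NCSym}_\nn=\mathrm{ch}\circ\pi(\pattern_\NPL^{(1)})$ has a basis indexed by non-nesting partitions of $[n]$ and hence dimension $C_n=\dim\mathsf{CQSym}_n$. The chain of Catalan bijections (non-nesting partitions $\leftrightarrow$ non-crossing partitions $\leftrightarrow$ non-decreasing parking functions) then furnishes a degree-preserving linear bijection $\Phi$ between the distinguished bases of $\mathsf{CQSym}$ and $\mathrm{NCSym}_\nn$.

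Next I would upgrade $\Phi$ to an algebra isomorphism via freeness. Restricting the freeness result for $\pattern_\NPL$ to width one shows that $\mathrm{NCSym}_\nn$ is free as an algebra, with generators corresponding to atomic pairs $(\cP,\cQ)$ in which $\cP$ is a chain. For a chain the only admissible factorizations occur at initial segments, so $(\cP,\cQ)$ is atomic exactly when the normal-subgroup data $\cQ$ is \emph{connected}, in the sense that no initial segment of $[n]$ lies entirely below its complement in $\cQ$; counting these should recover $\sum_{j=2}^n C_{j-2}C_{n-j}=C_{n-1}$, which matches the number of connected parking functions that freely generate $\mathsf{CQSym}$. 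Thus both are free graded connected Hopf algebras over generating spaces of equal graded dimension, and I can arrange $\Phi$ to carry the $\mathsf{CQSym}$-generators onto the atomic generators of $\mathrm{NCSym}_\nn$.

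Finally I would verify that $\Phi$ respects the coproduct, which is the genuinely substantive step. Here I would invoke the explicit structure constants: the product and coproduct of $\mathrm{NCSym}_\nn$ are computed from $\Inf_{A,B}$ and $\Res_{A,B}$ through the superclass-indicator lemma and Theorem \ref{StructureConstantsSupercharacters}, while the product and coproduct of $\mathsf{CQSym}$ are given by Novelli--Thibon on the parking-function basis. Matching these two descriptions across $\Phi$ confirms that $\Phi$ is a coalgebra morphism as well, so that the bijective bialgebra map $\Phi$ is an isomorphism of Hopf algebras.

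The main obstacle will be this coproduct compatibility: the algebra isomorphism and the Catalan counts are essentially bookkeeping, but showing that the single fixed bijection $\Phi$ simultaneously intertwines products and coproducts requires reconciling the restriction combinatorics (Corollary \ref{PatternRestriction} and Lemma \ref{RestrictionCombinatorics}) with Novelli--Thibon's deconcatenation coproduct on parking functions. If a direct basis-wise check proves unwieldy, the fallback is to note that both Hopf algebras are free over their spaces of primitive generators and to apply a rigidity argument of Leray--Milnor--Moore type, reducing the isomorphism to the already-established matching of generating spaces degree by degree.
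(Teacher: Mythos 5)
Your proposal is correct in substance, but the route you put forward as primary is not the one the paper takes, and it contains the one genuinely risky step; the route you relegate to a ``fallback'' is, almost verbatim, the paper's entire proof. The paper argues as follows: both $\mathsf{CQSym}$ and $\mathrm{NCSym}_\nn$ are free as algebras over sets of \emph{primitive} generators (for $\mathrm{NCSym}_\nn$ this comes from the freeness proposition and the theorem producing the algebraically independent primitives $S^{(a)}_{\UT_\cQ}$, specialized to width one; for $\mathsf{CQSym}$ it is Novelli--Thibon's result on connected parking functions); a non-nesting partition is atomic exactly when the associated non-crossing partition is, so the generators in degree $n$ are counted on both sides by $\sum_{j=2}^n C_{j-2}C_{n-j}=C_{n-1}$; and a graded connected Hopf algebra that is free over primitives is determined up to isomorphism by the graded dimension of its generating space, so the count finishes the argument. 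Your first two paragraphs (the $C_n$ dimension count and the identification of atomic pairs over a chain) are consistent with this, and your description of atomicity --- no initial segment lying entirely below its complement in $\cQ$ --- matches the paper's notion. Where you go astray is in the third step: there is no reason the particular bijection $\Phi$ induced by matching the distinguished bases (superclass functions versus non-decreasing parking functions) should intertwine the coproducts, and the paper makes no attempt to verify this; trying to reconcile the restriction combinatorics of Corollary \ref{PatternRestriction} and Lemma \ref{RestrictionCombinatorics} with the deconcatenation coproduct for that fixed $\Phi$ would likely fail or at best require redefining $\Phi$. The correct move is to abandon the fixed basis bijection and send primitive generators to primitive generators, which is exactly your fallback and is where the actual content lies. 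So: the counting and freeness bookkeeping you describe is what the paper does; the ``substantive'' coproduct verification you flag as the main obstacle is not needed and should be dropped in favor of the rigidity argument.
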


\begin{proof}
We have that each algebra is free under the product of primitive (atomic) generators.  The number of primitive generators in degree $n$ for $\mathrm{NCSym}_\nn$ is the number of non-nesting partitions that that are not the concatenation of two other nontrivial  non-nesting partitions.  However, a non-nesting partition is atomic if and only if the corresponding non-crossing partition (the unique partition that has the same left and right endpoints) is atomic.  The number of atomic non-crossing partitions of $\{1,2,\ldots,n\}$ is  
$$\sum_{j=2}^nC_{j-2}C_{n-j}=C_{n-1}.$$
Thus, the two Hopf algebras must be isomorphic.
\end{proof}


\vspace{2cm}
\noindent (Farid Aliniaeifard) Department of Mathematics, University of Colorado at Boulder, Boulder, Colorado, 80309, USA\\
E-mail address: farid.aliniaeifard@colorado.edu\\
\\
\noindent (Nathaniel Thiem) Department of Mathematics, University of Colorado at Boulder, Boulder, Colorado, 80309, USA.\\
E-mail address: thiemn@colorado.edu

\end{document}